\documentclass{article}
\usepackage[english]{babel}
\usepackage[utf8]{inputenc}
\usepackage{bm}
\usepackage{amsmath,amsthm,amsfonts}
\usepackage{graphicx}
\usepackage{color}
\usepackage{multirow}
\usepackage{subcaption}

\setlength{\topmargin}{0mm} \setlength{\textheight}{220mm}
\setlength{\oddsidemargin}{0mm}
\setlength{\evensidemargin}{\oddsidemargin}
\setlength{\textwidth}{160mm} \setlength{\baselineskip}{18pt}

\newtheorem{theorem}{Theorem}

\newcommand{\const}{\mathop{\rm const}\nolimits}

\graphicspath{ {./figs/} }

\date{}

\title{Efficient decoupling schemes for multiscale multicontinuum problems in fractured porous media}

\author{
Maria Vasilyeva 
\thanks{Department of Mathematics and Statistics, Texas A\&M University  - Corpus Christi,   Corpus Christi, Texas, USA. Email: {\tt maria.vasilyeva@tamucc.edu}.}
}

\begin{document}

\maketitle

\begin{abstract}
We consider the coupled system of equations that describe flow in fractured porous media. To describe such types of problems, multicontinuum and multiscale approaches are used. 
Because in multicontinuum models, the permeability of each continuum has a significant difference, a large number of iterations is required for the solution of the resulting linear system of equations at each time iteration. The presented decoupling technique separates equations for each continuum that can be solved separately, leading to a more efficient computational algorithm with smaller systems and faster solutions. This approach is based on the additive representation of the operator with semi-implicit approximation by time, where the continuum coupling part is taken from the previous time layer. We apply, analyze and numerically investigate decoupled schemes for classical multicontinuum problems in fractured porous media on sufficiently fine grids with finite volume approximation. We show that the decoupled schemes are stable, accurate, and computationally efficient. 
Next, we extend and investigate this approach for multiscale approximation on the coarse grid using the nonlocal multicontinuum (NLMC) method. In NLMC approximation, we construct similar decoupled schemes with the same continuum separation approach. A numerical investigation is presented for model problems with two and three-continuum in the two-dimensional formulation.
\end{abstract}

\section{Introduction}

Numerical simulation of the flow processes in fractured porous media plays an essential role in reservoir simulation ($CO_2$ sequestration, unconventional gas production, and geothermal energy production).
Fractures usually have complex geometries,  high permeability, multiple scales, and very small thicknesses compared to typical reservoir sizes. 
A typical model approach for fractured media is based on the lower-dimensional representation of the fracture objects   \cite{martin2005modeling, d2012mixed, formaggia2014reduced, Quarteroni2008coupling, schwenck2015dimensionally}.
In such models, we have a coupled mixed dimensional system of equations with $d$ - dimensional equation for flow in a porous matrix and $(d-1)$ - dimensional equation for fracture networks with the cross-flow between them.

Approximation techniques of mixed dimensional models can be classified based on the applied meshing method. The discrete fracture model (DFM) is associated with the conforming discretization, or explicit meshing of the fracture geometry \cite{hoteit2008efficient, karimi2003efficient, karimi2001numerical, garipov2016discrete}.
This method is computationally expensive since a large number of unknowns arise in discrete problems. However, the DFM is an accurate tool to describe the flow in fractured porous media.
In another approach, the fractures are not resolved by grid but are considered as an overlaying continuum (embedded fracture model, EFM) \cite{hkj12, ctene2016algebraic, tene2016multiscale}. In EFM, matrix and fracture are viewed as two porosity/permeability types co-existing at one spatial location. The concept of this approach can be classified in the class of dual-continuum or multicontinuum models \cite{barenblatt1960basic, warren1963behavior, douglas1990dual, ginting2011application}. We note that both approaches are accurate for sufficiently fine grids.  

Due to the heterogeneity of the properties and multiple scales, the simulation of the flow processes in fractured porous media requires a very fine grid for accurate approximation, which is computationally expensive. 
The multiscale methods or upscaling techniques are used to reduce the dimension of the fine-grid system \cite{houwu97, eh09, weinan2007heterogeneous, lunati2006multiscale, jenny2005adaptive}.
In previous works, we presented the coarse-scale model based on the Generalized Multiscale Finite Element Method (GMsFEM) for flow in fractured porous media \cite{akkutlu2015multiscale, chung2017coupling, efendiev2015hierarchical, akkutlu2018multiscale}. The general idea of GMsFEM is to design suitable spectral problems to describe important flow modes or continua in basis construction \cite{EGG_MultiscaleMOR, egh12, chung2016adaptive, CELV2015}. Recently, the authors in \cite{chung2017constraint} proposed a constraint energy minimization GMsFE method (CEM-GMsFEM). In CEM-GMsFEM,  the multiscale basis functions are constructed to capture long channelized effects in an oversampling domain. Using such multiscale basis functions, we recently presented a nonlocal multicontinuum (NLMC) method \cite{chung2017non} for problems in heterogeneous fractured media. We remark that since the multiscale basis functions are computed in an oversampled domain, the transfers between fractures and matrix become nonlocal. In the NLMC approximation, the resulting system is very similar to the traditional finite volume method but very accurate due to the nonlocal coupling in each continuum and between them.

In time approximation, additive operator-difference schemes are a helpful tool for solving unsteady equations. Such schemes are constructed for different evolutionary vector problems or systems of equations, for example, Navier-Stokes equations, poro- and thermo-elasticity problems, etc. (see \cite{vabishchevich2013additive} for details). The computational algorithm based on explicit-implicit approximations in time is presented in \cite{gaspar2014explicit}. 
In \cite{kolesov2014splitting}, we consider the coupled systems of linear unsteady partial differential equations, which arise in the modeling of poroelasticity processes. We constructed several splitting schemes and gave some numerical comparisons for typical poroelasticity problems. 
We observe that it is a promising technique for solving large three-dimensional coupled problems, where variable separation can provide a more efficient algorithm and more straightforward implementation. Application of the splitting schemes with loose coupling to solve the fluid-structure interaction problems in hemodynamics is presented in \cite{bukavc2016operator}, where splitting is based on a time discretization by an operator-splitting scheme. In our previous work, we numerically investigated splitting schemes for the thermoporoelasticity problem in fractured media \cite{ammosov2019splitting}. We presented and investigated splitting schemes for multicontinuum media with fixed stress splitting to effectively solve the coupled system of equations for pressures, temperatures, and displacements. Recently, the application of the additive schemes has been considered in  \cite{efendiev2021temporal,efendiev2021splitting}. 
The partially-explicit time discretizations for nonlinear multiscale problems is considered in \cite{chung2021contrast}. The method was combined with the machine learning techniques applied for the implicit part of the operator \cite{efendiev2022efficient}. 
The next extension of the partially explicit scheme is presented in 
\cite{leung2022multirate}, where the multirate partially explicit scheme for multiscale flow problems are considered and analyzed.

In this work, we consider a class of multicontinuum problems that are classically used to describe flow in fractured porous media \cite{barenblatt1960basic, warren1963behavior}. The mathematical model is described by a coupled system of equations for pressure in each continuum. 
To approximate by space, we use a regular finite volume approximation with an embedded fracture model to simulate large thin fractures (hydraulic fractures) and a dual continuum (dual porosity dual permeability) approach for natural fractures. Such models are similar and can be described using the multicontinuum approach \cite{vasilyeva2019nonlocal}. 
The main goal of the work is to develop, analyze and investigate an efficient decoupling scheme that allows separating equations for each continuum. The approach is based on the semi-implicit approximation by the time where the coupling part between each continuum is taken from the previous time layer. We apply and analyze this approach for classical multicontinuum problems in fractured porous media on regular, sufficiently fine grids. We show that the presented scheme is stable, accurate, and computationally efficient. Presented decoupled schemes separate the continuum, and the resulting equation in a sequence requires a smaller number of iterations.
Next, we consider a multiscale approximation, where the same multicontinuum approach can be used to construct an accurate reduced-order model by space. We construct the multicontinuum upscaled models based on the nonlocal multicontinuum (NLMC) method, where multiscale basis functions are calculated in a local domain for each continuum. Numerical results show that the coupled scheme for the NLMC  method provides an accurate and efficient upscaled model on the coarse grid. The presented continuum decoupling approach can be naturally extended for the multiscale nonlocal multicontinuum models and inherit properties of the regular finite volume approximation. This work presents a numerical investigation of the two-dimensional multicontinuum problem. However, we expect a good efficiency for three-dimensional problems, where the number of unknowns is larger, which will be considered in future works. 

The paper is organized as follows.  In Section 2, we describe problem formulation for flow in multicontinuum media.  
Section 3 presents a finite volume approximation by space for multicontinuum problems in fractured porous media with coupled and decoupled schemes.  The extension of the method for multiscale approximation is given in Section 4.  In Section 5, we present numerical results for two test problems in two-dimensional formulation (two- and three-continuum models).  A numerical investigation is presented for regular finite volume approximation on the fine grid and nonlocal multicontinua approximation on the coarse grid.  A conclusion is drawn in Section 6.

\section{Problem formulation}

Dual porosity models describe flow in naturally fractured media and are common in reservoir simulation \cite{barenblatt1960basic, warren1963behavior}.  The mathematical model is described by a coupled system of equations for flow in a porous matrix and natural fracture continuum
\begin{equation}
\label{mm-dp}
\begin{split}
& c_1 \frac{ \partial p_1 }{\partial t} 
- \nabla \cdot (k_1 \nabla p_1) +  \sigma_{12} (p_1 - p_2) =   f_1, 
\quad  x \in \Omega, \\
& c_2 \frac{ \partial p_2 }{\partial t} 
- \nabla \cdot (k_2 \nabla p_2) +   \sigma_{12} (p_2 - p_1)=   f_2.
\quad  x \in \Omega,
\end{split}
\end{equation}
where $\Omega \in \mathcal{R}^d$ is the domain where the porous matrix and natural fracture coexist and $d$ is the dimension. 
Here 
$p_1$ is the pressure in the porous matrix continuum, 
$p_2$ is the pressure in the natural fractures continuum, 
$c_1$, $c_2$, $k_1$ and $k_2$ are the problem coefficients, 
$\sigma_{12} $ are the coupling terms between continuum, 
$f_1$ and $f_2$ are the source terms.  
This model is a macroscale model based on the idealization of the naturally fractured media and can be derived based on homogenization \cite{douglas1990dual}.  

For large-scale fractures (for example, hydraulic fractures), models with explicit fracture representation are usually used. A common approach for flow simulation is based on the mixed dimensional formulation \cite{martin2005modeling, d2012mixed, formaggia2014reduced, Quarteroni2008coupling, schwenck2015dimensionally}.  Let  $\gamma \in \mathcal{R}^{d-1}$ be the lower dimensional domain for fractures.  Then the mathematical model is described by the following coupled system of equations for pressure in hydraulic fracture continuum  ($p_f$) and porous matrix ($p_m$):
\begin{equation}
\label{mm-md}
\begin{split}
& c_m \frac{ \partial p_m }{\partial t} 
- \nabla \cdot (k_m \nabla p_m) +  \sigma_{mf} (p_m - p_f) =   f_m, 
\quad  x \in \Omega, \\
& c_f \frac{ \partial p_f }{\partial t} 
- \nabla \cdot (k_f \nabla p_f) +  \sigma_{mf} (p_f - p_m)=   f_f.
\quad  x \in \gamma.
\end{split}
\end{equation}

We can notice that models are similar and can be generalized for the multicontinuum case for the more complex flow phenomena where both natural and hydraulic fractures coexist (three-continuum model)
\begin{equation}
\label{mm-tc}
\begin{split}
& c_{1} \frac{ \partial  p_1}{\partial t}
- \nabla \cdot ( k_{1}  \nabla p_{1})
+   \sigma_{1 2} (p_1 - p_2)
+   \sigma_{1 f} (p_1 - p_f) = f_1, \quad
x \in \Omega, \\
& c_{2} \frac{ \partial p_2}{\partial t}
- \nabla \cdot (k_{2}  \nabla p_{2})
+   \sigma_{21} (p_2 - p_1)
+   \sigma_{2f} (p_2 - p_f)  = f_2, \quad
x \in \Omega, \\
& c_{f} \frac{ \partial  p_f}{\partial t}
- \nabla \cdot (k_{f}  \nabla p_{f})
+   \sigma_{f1} (p_f - p_1)
+   \sigma_{f2} (p_f - p_2)  = f_f, \quad
x \in \gamma,
\end{split}
\end{equation}
where $\sigma_{\alpha \beta} = \sigma_{\beta \alpha}$  is the coupling coefficient between continuum $\alpha$ and $\beta$ that characterize a flow between them. 

We can generalize the model for the multicontinuum case
\begin{equation}
\label{mm}
c_{\alpha} \frac{ \partial  p_{\alpha}}{\partial t}
- \nabla \cdot  (k_{\alpha}  \nabla p_{\alpha})
+  \sum_{\beta \neq \alpha} \sigma_{\alpha \beta} (p_{\alpha} - p_{\beta})  = f_{\alpha},  \quad x \in \Omega_{\alpha},
\end{equation}
where $\alpha, \beta = 1,2,...,L$ and $L$ is the number of continuum.  

The system of equations \eqref{mm} is considered with the homogeneous Neumann boundary conditions for each continuum
\begin{equation}
\label{mm-bc}
- k_{\alpha}  \nabla p_{\alpha} \cdot n = 0, \quad x \in \partial \Omega_{\alpha},
\end{equation}
where $n$ is the outer normal vector to the domain boundary $\partial \Omega_{\alpha}$, 
and some given initial conditions
\[
p_{\alpha}(t)= p_{\alpha, 0}, \quad t=0,
\] 
in $\Omega_{\alpha}$.

The multicontinuum models are widely used in reservoir simulations. For example,  in gas production from shale formation, we have a highly heterogeneous and complex mixture of organic matter, inorganic matter, and multiscale fractures \cite{akkutlu2012multiscale, akkutlu2017multiscale}. Another example is the fractured vuggy reservoirs, where multicontinuum models are used to characterize the complex interaction between vugges, fractures, and porous matrix \cite{wu2011multiple, wu2007triple, yao2010discrete}.

In this paper, we follow the approach presented in \cite{vabishchevich2013additive, kolesov2014splitting,gaspar2014explicit} and represent the system of equations \eqref{mm}  as a system of equations for vector $p = (p_1, p_2, ...p_L)^T$. 
Let $p \in V$ and $V = V_1 \oplus V_2 \oplus ... \oplus V_L$  be a direct sum of spaces $V_{\alpha}$, where $p_{\alpha} \in V_{\alpha}$ and  $V_{\alpha}$ is the Hilbert space.   Therefore for $p(t) \in V$, we have the following system of equations
\begin{equation}
\label{mm1}
\mathcal{C} \frac{dp}{dt} + \mathcal{A} p = f(t),   \quad 0 < t  \leq T,
\end{equation}
with 
\[
\mathcal{A} = 
\mathcal{D} + \mathcal{Q}, 
\quad
\mathcal{D} =
 \begin{pmatrix}
\mathcal{D}_1 & 0 & ... & 0\\
0 & \mathcal{D}_2 & ... & 0\\
... & ... & ... & ...\\
0 & 0 & ... & \mathcal{D}_L
\end{pmatrix}, 
\quad 
\mathcal{C} = 
\begin{pmatrix}
c_1 & 0 & ... & 0\\
0 & c_2 & ... & 0\\
... & ... & ... & ...\\
0 & 0 & ... & c_L
\end{pmatrix}, 
\]
and 
\[
\mathcal{Q} =  
 \begin{pmatrix}
\sum_{\beta \neq 1} \sigma _{1\beta} & -\sigma_{12} & ... & -\sigma_{1L}\\
-\sigma_{21} & \sum_{\beta \neq 2} \sigma_{2\beta} & ... & -\sigma_{2L}\\
... & ... & ... & ...\\
-\sigma_{L1} & -\sigma_{L2} & ... &  \sum_{\beta \neq L} \sigma_{L\beta}\\
\end{pmatrix}, 
\quad 
f = \begin{pmatrix}
f_1\\
f_2\\ 
...\\
f_L
\end{pmatrix}, 
\]
where $\mathcal{D}_{\alpha} p_{\alpha} = - \nabla \cdot (k_{\alpha}   \nabla p_{\alpha})$ is the diffusion operator for component $\alpha$.  
We consider \eqref{mm1} with initial condition
\begin{equation}
\label{mm1-ic}
p(t) = p_0, \quad t = 0.
\end{equation}
with $p_0 = (p_{10}, p_{20}, ..., p_{L0})^T$ and $p_{\alpha0}$ is the initial condition for component $\alpha$. 

For the system of equations \eqref{mm1} that describe the flow problems in multicontinuum media,  we have the following physical properties
\[
c_{\alpha} \geq 0, \quad 
k_{\alpha} \geq 0, \quad 
\sigma_{\alpha \beta} = \sigma_{\beta \alpha}. 
\]
Therefore  $\mathcal{A}$ and $\mathcal{C}$  are self-adjoint and positive definite operators.  
Let $(p,v)$ and $(p,v)_{\mathcal{A}} = (\mathcal{A}p, v)$ are the scalar products for $p,v \in V$, and $||p|| = \sqrt{(p,p)}$ and $||p||_{\mathcal{A}} = \sqrt{ (\mathcal{A}p, v)}$ be the norms in $V$ and $V_{\mathcal{A}}$. 
For the problem \eqref{mm1} with initial condition \eqref{mm1-ic}, we have the following a priory estimate of the stability of the solution with respect to the initial condition and right-hand side. 

\begin{theorem}
\label{t:t1}
The solution of the problem \eqref{mm1} satisfies the following a priory estimate
\begin{equation}
\label{t1}
||p(t)||_{\mathcal{A}}^2 \leq ||p(0)||_{\mathcal{A}}^2 
+ \frac{1}{2} \int_0^t ||f(s)||^2_{\mathcal{C}^{-1}} ds.
\end{equation}
\end{theorem}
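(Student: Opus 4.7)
The plan is to run a standard energy argument by testing the evolution equation \eqref{mm1} against $dp/dt$, rather than against $p$ itself. Taking the inner product of \eqref{mm1} with $p_t = dp/dt$ yields
\begin{equation*}
(\mathcal{C} p_t, p_t) + (\mathcal{A} p, p_t) = (f, p_t).
\end{equation*}
Because $\mathcal{A}$ is self-adjoint and $t$-independent, the second term collapses to a perfect time derivative: $(\mathcal{A} p, p_t) = \tfrac{1}{2}\tfrac{d}{dt}(\mathcal{A}p,p) = \tfrac{1}{2}\tfrac{d}{dt}\|p\|_{\mathcal{A}}^2$. This is exactly what is needed because the target estimate is stated in the $\mathcal{A}$-norm rather than the more customary $\mathcal{C}$-norm. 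The first term, $(\mathcal{C}p_t,p_t) = \|p_t\|_{\mathcal{C}}^2$, is nonnegative by positive definiteness of $\mathcal{C}$; it will be the absorber for the right-hand side term.

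Next I would estimate the forcing term by a weighted Cauchy–Schwarz / Young inequality tuned to match the factor $\tfrac{1}{2}$ in the claim. Writing $(f,p_t) = (\mathcal{C}^{-1/2}f,\mathcal{C}^{1/2}p_t)$ and applying $ab \leq \varepsilon a^2 + \tfrac{1}{4\varepsilon} b^2$ with $\varepsilon = 1$ gives
\begin{equation*}
(f, p_t) \leq \|p_t\|_{\mathcal{C}}^2 + \tfrac{1}{4}\|f\|_{\mathcal{C}^{-1}}^2.
\end{equation*}
Substituting back and cancelling $\|p_t\|_{\mathcal{C}}^2$ on both sides leaves
\begin{equation*}
\tfrac{1}{2}\tfrac{d}{dt}\|p\|_{\mathcal{A}}^2 \leq \tfrac{1}{4}\|f\|_{\mathcal{C}^{-1}}^2.
\end{equation*}

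Integrating from $0$ to $t$ produces the claimed bound \eqref{t1}. The only real subtlety is choosing the Young constant $\varepsilon=1$ rather than $\varepsilon=\tfrac{1}{2}$, so that the $\|p_t\|_{\mathcal{C}}^2$ term is absorbed exactly and the numerical factor $\tfrac{1}{2}$ in front of the integral is recovered. The other ingredients —~self-adjointness of $\mathcal{A},\mathcal{C}$, positive definiteness (which ensures $\mathcal{C}^{-1}$ is well defined and induces a genuine norm), and the identification of the $\mathcal{A}$-term with a time derivative —~are guaranteed by the physical assumptions $c_\alpha>0$, $k_\alpha\geq 0$, and $\sigma_{\alpha\beta}=\sigma_{\beta\alpha}$ already noted in the text. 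No Gronwall step is needed because the energy inequality is of ``purely dissipative'' type: the $\mathcal{A}$-norm does not appear on the right-hand side.
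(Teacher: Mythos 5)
Your proposal is correct and follows essentially the same route as the paper's proof: testing against $dp/dt$, converting $(\mathcal{A}p,p_t)$ into $\tfrac{1}{2}\tfrac{d}{dt}\|p\|_{\mathcal{A}}^2$ via self-adjointness (which the paper justifies through integration by parts and the homogeneous Neumann conditions), applying Young's inequality with the weight chosen so that $\|p_t\|_{\mathcal{C}}^2$ is absorbed exactly, and integrating in time. There is no substantive difference between the two arguments.
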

\begin{proof}
By multiplying the equation \eqref{mm1} by $\frac{dp}{dt}$, we obtain
\[
\left(\mathcal{C} \frac{dp}{dt}, \frac{dp}{dt}\right) 
+ \left( \mathcal{A} p,   \frac{dp}{dt} \right) = \left(f, \frac{dp}{dt} \right). 
\]
Next, using integration by parts and free boundary conditions \eqref{mm-bc}
\[
\frac{1}{2} \frac{d}{dt} \left( \mathcal{A} p,  p\right) =  \left( \mathcal{A} p,   \frac{dp}{dt} \right),
\]
and applying a Cauchy inequality 
for the right-hand side
\[
\left(f, \frac{dp}{dt} \right) \leq 
\left| \left( f, \frac{dp}{dt} \right) \right| \leq 
\left\| \frac{dp}{dt} \right\|_{\mathcal{C}}^2 + \frac{1}{4} \left\| f \right\|_{\mathcal{C}^{-1}}^2,
\]
we obtain the following estimate
\[
\left(\mathcal{C} \frac{dp}{dt}, \frac{dp}{dt}\right) 
+ \frac{1}{2} \frac{d}{dt} \left( \mathcal{A} p,  p \right) 
\leq 
\left\| \frac{dp}{dt} \right\|_{\mathcal{C}}^2 + \frac{1}{4} \left\| f \right\|_{\mathcal{C}^{-1}}^2,
\]
or
\[
\frac{d}{dt}  \left\| p \right\|_{\mathcal{A}}^2
\leq 
\frac{1}{2} \left\| f \right\|_{\mathcal{C}^{-1}}^2.
\]
Finally,  after integration by time, we obtain inequality \eqref{t1} . 
\end{proof}

Next, we consider regular approximation by space variables on the sufficiently fine grid.  We use a finite volume method with an embedded fracture model for mixed dimensional flow problems.  

\section{Decoupling schemes for the fine grid finite volume approximation}

To approximate by space, we use a regular finite volume approximation with an embedded fracture model. 
For time approximation, we first consider a regular implicit scheme that leads to the coupled system of equations for multicontinuum media. 
We present the construction of the decoupled schemes that separate equations for each continuum, leading to a more efficient computational algorithm with smaller systems and faster solutions.

\subsection{Finite volume approximation by space}

Let $\Omega$ be the two-dimensional square domain and $\gamma$ be the lower dimensional domain of high-permeable thin fractures.  
In domain $\Omega$, we construct a structured grid $\mathcal{T}_h = \cup_i \varsigma_i$, where $\varsigma_i$ is the square $h \times h$ cell.  
In one-dimensional domain $\gamma$, we construct mesh $\gamma_h = \cup_l \iota_l$. 
For the problem \eqref{mm-md} using a two-point flux approximation, we obtain the following semi-discrete form for $(p_m, p_f)$
\begin{equation}
\label{app-md}
\begin{split}
& c_{m,i} \frac{\partial p_{m, i} }{\partial t}|\varsigma_i|
 + \sum_{j}  T_{m,ij}  (p_{m, i} - p_{m, j})
 + \sum_{n} \sigma_{in} (p_{m, i} - p_{f, n} )
 =  f_{m,i}   |\varsigma_i|, \quad \forall i = 1, N^{m}_h \\
& c_{f,l} \frac{\partial p_{f, l} }{\partial t}  |\iota_l|
+ \sum_{n} T_{f,ln} (p_{f, l} - p_{f, n})
+ \sum_l \sigma_{jl} (p_{f, l} - p_{m, j} )
 =  f_{f,l}  |\iota_l|, \quad \forall l = 1, N^{f}_h
\end{split}
\end{equation}
where $p_{\alpha} = (p_{\alpha, 1}, p_{\alpha, 2},..., p_{\alpha, N^{\alpha}_H})^T$ for $\alpha = m,f$ with $N^m_h = N^{\Omega}_h$ and $N^f_h = N^{\gamma}_h$ ($N^{\Omega}_h$ and $N^{\gamma}_h$ are the number of cells for grid  $\mathcal{T}_h$ and $\gamma_h$).
Here
$T_{m,ij} = k_m |E_{ij}|/d_{ij}$ ($|E_{ij}|$ is the length of facet between cells $\varsigma_i$ and $\varsigma_j$, $d_{ij}$ is the distance between midpoint of cells $\varsigma_i$ and $\varsigma_j$),
$T_{f,ln} = k_f |E_{ln}|/d_{ln}$ ($|E_{ln}|$ and $d_{ln}$ are the same quantities on grind $\gamma_h$), 
$\sigma_{il} = \sigma |E^i_l|/d^i_l$ if $\iota_l \cap \varsigma_i \neq 0$ and zero else ($|E^i_l|$ is the length of fracture-matrix interface and $d^i_l$ is the distance between fracture and matrix of cells).

For the flow problem in three continuum model \eqref{mm-tc} with $ \Omega_1 = \Omega_2 = \Omega$ and $\Omega_3 = \gamma$,  we have similar semi-discrete form for $(p_1, p_2, p_f)$
\begin{equation}
\label{app-tc}
\begin{split}
& c_{1,i} \frac{\partial p_{1, i} }{\partial t} |\varsigma_i|
 + \sum_{j}  T_{1,ij}  (p_{1, i} - p_{1, j})
 +  \sigma_{12,ii} (p_{1, i} - p_{2, i} )
 +  \sum_n \sigma_{1f,in} (p_{1, i} - p_{f, n} )
 =  f_{1,i}   |\varsigma_i|, \quad \forall i = 1, N^1_h \\
 & c_{2,i} \frac{\partial p_{2, i} }{\partial t} |\varsigma_i|
 + \sum_{j}  T_{2,ij}  (p_{2, i} - p_{2, j})
 +  \sigma_{12,ii} (p_{2, i} - p_{1, i} )
 +  \sum_n \sigma_{2f,in} (p_{2, i} - p_{f, n} )
 =  f_{2,i}   |\varsigma_i|, \quad \forall i = 1, N^2_h \\
& c_{f,l} \frac{\partial p_{f,l} }{\partial t} |\iota_l|
+ \sum_{n} T_{f,ln} (p_{f, l} - p_{f, n})
+ \sum_j \sigma_{1f,jl} (p_{f, l} - p_{1, j} )
+ \sum_j \sigma_{2f, jl} (p_{f, l} - p_{2, j} )
 =  f_{f,l}  |\iota_l|, \quad \forall l = 1, N^f_h
\end{split}
\end{equation}
where 
$p_{\alpha} = (p_{\alpha, 1}, p_{\alpha, 2},..., p_{\alpha, N^{\alpha}_H})^T$ for $\alpha=1,2,f$ with  $N^1_h = N^2_h = N^{\Omega}_h$ and $N^f_h = N^{\gamma}_h$, 
$T_{1,ij} = k_1 |E_{ij}|/d_{ij}$, $T_{2,ij} = k_2 |E_{ij}|/d_{ij}$.

The general form for multicontinuum flow problems can be written as follows
\begin{equation}
\label{app}
 c_{\alpha,i} \frac{\partial p_{{\alpha}, i} }{\partial t} |\varsigma^{\alpha}_i|
 + \sum_{j}  T_{{\alpha},ij}  (p_{{\alpha}, i} - p_{{\alpha}, j})
 + \sum_{{\alpha \neq \beta}} 
 \sum_{j} \sigma_{\alpha \beta,ij} (p_{{\alpha}, i} - p_{\beta, j} )
 =  f_{\alpha,i}   |\varsigma^{\alpha}_i|, \quad \forall i = 1, N^{\alpha}_h,
\end{equation}
where $\alpha = 1,2,...,L$.

In the matrix form, we have the following system of coupled equations for $p = (p_1, p_2, ...,p_L)$
\begin{equation}
\label{app-mc}
M \frac{\partial p}{\partial t} + A p = F,
\quad A = D+ Q, 
\end{equation}
with
\[
M = \begin{pmatrix}
M_1 & 0 & ... & 0 \\
0 & M_2 & ... &  0 \\
 ... &  ... &  ...  &   ...   \\
0 & 0 &  ... & M_L
\end{pmatrix},  \quad
D = \begin{pmatrix}
D_1 & 0 & ... & 0 \\
0 & D_2 & ... &  0 \\
 ... &  ... &  ...  &   ...   \\
0 & 0 &  ... & D_L
\end{pmatrix},
\]\[
Q = \begin{pmatrix}
\sum_{\beta \neq 1} Q_{1\beta} & -Q_{12} & ...  & -Q_{1L} \\
-Q_{21} & \sum_{\beta \neq 2} Q_{2\beta} & ... & -Q_{2L} \\
 ... &  ... &  ...  &   ... &  \\
-Q_{L1}   & -Q_{L2} & ... & \sum_{\beta \neq L} Q_{L \beta}
\end{pmatrix},\quad
F = \begin{pmatrix}
F_1 \\
F_2 \\
 ...   \\
F_L
\end{pmatrix},  
\]
where 
$M_{\alpha} = \{m_{\alpha,ij}\}$, 
$D_{\alpha} = \{a_{\alpha,ij}\}$, 
$Q_{\alpha \beta} = \{q_{\alpha \beta, ij}\}$, 
$F_{\alpha} = \{f_{\alpha,j} |\varsigma^{\alpha}_j|\}$
\[
m_{\alpha,ij} =
\left\{\begin{matrix}
c_{\alpha,i} |\varsigma^{\alpha}_i|  & i = j, \\
0 & otherwise
\end{matrix}\right. ,  
\quad
a_{\alpha,ij} =
\left\{\begin{matrix}
\sum_{n \neq i} T_{\alpha, in} & i = j, \\
-T_{\alpha, ij} & otherwise
\end{matrix}\right. ,  
 \quad
q_{\alpha \beta,ij} =
\left\{\begin{matrix}
\sigma_{\alpha \beta,ij} &  \varsigma^{\alpha}_i \cap \varsigma^{\beta}_j \neq 0, \\
0 & otherwise
\end{matrix}\right. .
\]
Here $Q_{\alpha \beta} =Q_{\beta \alpha}^T$ and $A = A^T \geq 0$.
It is well-known that the given finite volume method with two-point flux approximation provides a solution with second-order accuracy by space.

\subsection{Implicit approximation by time (coupled scheme)}

We apply an implicit scheme for time discretization to construct a discrete problem on the fine grid.  
Let $p^n_{\alpha,i} = p_{\alpha,i}(t_n)$ and $p^{n-1}_{\alpha,i} = p_{\alpha,i}(t_{n-1})$, where $t_n = n \tau$,  $n=1,2, ...$ and $\tau > 0$ be the fixed time step size.  
For approximation by time, we first apply backward Euler's approximation for time derivative and obtain an implicit scheme
\begin{equation}
\label{ct-mc}
M \frac{ p^n - p^{n-1} }{\tau} + A p^n = F^n,
\quad A = D+ Q, \quad 
n = 1,2,...
\end{equation}
with the following initial conditions
\[
p^0 = p_0, 
\]
This system is coupled and the size of system on the  fine-grid is  $N_h = \sum_{\alpha} N^{\alpha}_h$.

\begin{theorem}
\label{t:t2}
The solution of the discrete problem \eqref{ct-mc} is unconditionally stable and satisfies the following estimate
\begin{equation}
\label{t2}
||p^n||_{A}^2 \leq ||p^{n-1}||_{A}^2 
+ \frac{\tau}{2} ||F^n||^2_{\left(M + \frac{\tau}{2} A \right)^{-1}}.
\end{equation}
\end{theorem}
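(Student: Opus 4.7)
The plan is to mimic the energy argument of Theorem~\ref{t:t1} at the discrete level, replacing the time derivative by the backward difference $(p^n - p^{n-1})/\tau$. I would take the inner product of \eqref{ct-mc} with $(p^n - p^{n-1})/\tau$ to obtain
\[
\left(M \frac{p^n - p^{n-1}}{\tau}, \frac{p^n - p^{n-1}}{\tau}\right)
+ \left(A p^n, \frac{p^n - p^{n-1}}{\tau}\right)
= \left(F^n, \frac{p^n - p^{n-1}}{\tau}\right).
\]
The first term is exactly $\|(p^n-p^{n-1})/\tau\|_M^2$, so everything hinges on rewriting the middle term into a telescoping piece plus a nonnegative residue.

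The key algebraic step is the standard splitting $p^n = \tfrac12(p^n+p^{n-1}) + \tfrac12(p^n-p^{n-1})$. Using self-adjointness of $A$, this gives
\[
\left(A p^n,\, p^n - p^{n-1}\right)
= \tfrac12 \big(\|p^n\|_A^2 - \|p^{n-1}\|_A^2\big) + \tfrac12 \|p^n - p^{n-1}\|_A^2,
\]
and after dividing by $\tau$ the second piece combines with the $M$-term to produce the composite norm $\|(p^n-p^{n-1})/\tau\|_{M+\frac{\tau}{2}A}^2$. Thus the identity becomes
\[
\left\|\frac{p^n - p^{n-1}}{\tau}\right\|_{M + \frac{\tau}{2}A}^2
+ \frac{1}{2\tau}\big(\|p^n\|_A^2 - \|p^{n-1}\|_A^2\big)
= \left(F^n,\, \frac{p^n - p^{n-1}}{\tau}\right).
\]

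For the right-hand side I would apply the weighted Cauchy inequality
\[
\left(F^n,\, v\right) \leq \|v\|_{M+\frac{\tau}{2}A}^2 + \tfrac14 \|F^n\|_{(M+\frac{\tau}{2}A)^{-1}}^2,
\]
with $v = (p^n - p^{n-1})/\tau$. The composite-norm term on the right cancels exactly the corresponding term on the left, and multiplying the remaining inequality by $2\tau$ yields \eqref{t2}. Unconditional stability in the $A$-norm then follows by induction, since no restriction on $\tau$ is needed for $M + \tfrac{\tau}{2}A$ to be positive definite.

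I do not expect any genuine obstacle: $\mathcal{C}$, $\mathcal{D}$, $\mathcal{Q}$ and hence $A = D+Q$ inherit self-adjointness and positive (semi)definiteness from the continuous operators (as noted just before Theorem~\ref{t:t1}), so the splitting identity and the weighted Cauchy inequality both go through cleanly. The only place to be careful is ensuring that the Cauchy constants are tuned so that $\|v\|_{M+\frac{\tau}{2}A}^2$ on the two sides cancel exactly, which forces the prefactor $\tau/2$ and the weight $(M+\tfrac{\tau}{2}A)^{-1}$ appearing in \eqref{t2}.
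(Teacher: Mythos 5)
Your proposal is correct and is essentially the paper's own argument: the paper first rewrites \eqref{ct-mc} as $\left(M+\tfrac{\tau}{2}A\right)\tfrac{p^n-p^{n-1}}{\tau}+\tfrac12 A(p^n+p^{n-1})=F^n$ and then tests with $\tfrac{p^n-p^{n-1}}{\tau}$, which is algebraically identical to your ``multiply first, then split $Ap^n$'' ordering, and the weighted Cauchy step and the final multiplication by $2\tau$ coincide exactly. No substantive difference.
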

\begin{proof}
The equation \eqref{ct-mc} can be written as follows
\[
\left(M + \frac{\tau}{2} A \right) \frac{ p^n - p^{n-1} }{\tau} + \frac{1}{2} A (p^n  + p^{n-1}) = F^n.
\]
After multiplication by $ \frac{ p^n - p^{n-1} }{\tau}$, we obtain
\[
\left( \left(M + \frac{\tau}{2} A \right) \frac{ p^n - p^{n-1} }{\tau} , \frac{ p^n - p^{n-1} }{\tau} \right) 
+ \frac{1}{2}  \left( A (p^n + p^{n-1}),   \frac{ p^n - p^{n-1} }{\tau} \right) = \left(F^n, \frac{ p^n - p^{n-1} }{\tau} \right). 
\]
Next, using  a Cauchy inequality 
for the right-hand side
\[
\left(F^n, \frac{ p^n - p^{n-1} }{\tau} \right) \leq 
\left\| \frac{ p^n - p^{n-1} }{\tau} \right\|_{ \left(M + \frac{\tau}{2} A \right) }^2 
+ \frac{1}{4} 
\left\| F^n \right\|_{ \left(M + \frac{\tau}{2} A \right)^{-1}}^2,
\]
we obtain the following estimate for $A = A^T \geq 0$
\[
\frac{1}{2 \tau} \left( A (p^n + p^{n-1}),   (p^n - p^{n-1})  \right)
=
\frac{1}{2 \tau} (A p^n, p^n) + 
\frac{1}{2 \tau} (A p^{n-1}, p^{n-1}) 
\leq 
\frac{1}{4} 
\left\| F^n \right\|_{ \left(M + \frac{\tau}{2} A \right)^{-1}}^2,
\]
or
\[
(A p^n, p^n) \leq (A p^{n-1} p^{n-1})  + \frac{\tau}{2} 
\left\| F^n \right\|_{ \left(M + \frac{\tau}{2} A \right)^{-1}}^2.
\]
This estimate ensures the stability of the implicit scheme with respect to the initial condition and the right-hand side. 
\end{proof}

Note that a priory estimate \eqref{t2} can be written as follows
\[
||p^n||_{A}^2 \leq ||p^0||_{A}^2 
+ \frac{\tau}{2} \sum_{k=0}^n ||F^k||^2_{\left(M + \frac{\tau}{2} A \right)^{-1}}
\]
which is the similar to the estimate \eqref{t1}.

Presented regular implicit approximation leads to the large coupled system of equations. To decouple the system of equations, we apply semi-implicit approximation by time and solve smaller problems for each continuum separately.

\subsection{Decoupled schemes}

A regular implicit time approximation considered above leads to the solution of the coupled system of equations that requires the construction of the large discrete matrix $A$ on fine grid $\mathcal{T}_h$ with size  $N_h = \sum_{\alpha} N^{\alpha}_h$.
Moreover,  the multicontinuum problem properties usually have high contrast, for example, $k_f >> k_m$ for the fractured reservoirs ($k_f$ is the fracture continuum permeability and $k_m$ is the porous matrix permeability). Therefore, a large number of iterations are required in the iterative method to solve the linear equation system on each time step.
In this work, instead of the solution of the large coupled system of linear equations on each time step, we use an additive representation of the matrix to construct an uncoupled scheme.  We decouple solutions for each continuum to avoid the discontinuous nature of the porosity and permeability in multicontinuum problems by applying an additive representation of the system operator $A$ 
\[
A = A_0 + A_1
\]
with $A_0 = \text{diag}(A_{11}, A_{22}, ..., A_{LL})$ and $A_1 = A - A_0$. 
We approximate the coupling term from the previous time layer and obtain the following semi-implicit or explicit-implicit scheme
\begin{equation}
\label{si-mc}
M \frac{ p^n - p^{n-1} }{\tau} + A_0 p^n + A_1 p^{n-1} = F^n, 
n = 1,2,...
\end{equation}
with initial condition $p^0 = p_0$.  

In considered multicontinuum problem  \eqref{app-mc}, we have the following representation of operators (\textit{D-scheme})
\[
A_0 = \begin{pmatrix}
D_1+\sum_{\beta \neq 1} Q_{1\beta} & 0 & ... & 0 \\
0 & D_2+ \sum_{\beta \neq 2} Q_{2\beta} & ... &  0 \\
 ... &  ... &  ...  &   ...   \\
0 & 0 &  ... & D_L+\sum_{\beta \neq L} Q_{L \beta}
\end{pmatrix},
\]\[
A_1 = A - A_0 = 
 \begin{pmatrix}
0 & -Q_{12} & ...  & -Q_{1L} \\
-Q_{21} &0 & ... & -Q_{2L} \\
 ... &  ... &  ...  &   ...   \\
-Q_{L1}   & -Q_{L2} & ... & 0
\end{pmatrix}.
\]
Such representation separates coupling terms between continua and leads to the independent calculations of the problems in each continuum
\[
M_{\alpha} \frac{ p^n_{\alpha} - p^{n-1}_{\alpha} }{\tau}  + 
\left( 
D_{\alpha} + \sum_{\beta \neq \alpha} Q_{\alpha\beta}
\right) p^n_{\alpha} =
 F^{n} + 
 \sum_{\beta \neq \alpha} Q_{\alpha\beta}
p^{n-1}_{\beta}, 
\quad \alpha = 1,2,...,L.
\]
This system is decoupled, and the size of the system on the fine-grid for each continuum is $N^{\alpha}_h$.
In the considered additive representation, we have the following properties for operators
\begin{equation}
\label{dc-op}
A = A_0 + A_1  \geq 0, \quad
A_0 - A_1 \geq 0.
\end{equation}

\begin{theorem}
\label{t:t3}
If $A_0 - A_1 \geq 0$, the solution of the discrete problem \eqref{si-mc} is unconditionally stable and satisfies the following estimate
\begin{equation}
\label{t3}
||p^n||_{A}^2 \leq ||p^{n-1}||_{A}^2 
+ \frac{\tau}{2} ||F^n||^2_{\left(M + \frac{\tau}{2} (A_0 - A_1) \right)^{-1}}.
\end{equation}
\end{theorem}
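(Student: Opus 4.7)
The plan is to follow the same energy-argument template as in the proof of Theorem~\ref{t:t2}, adjusted to accommodate the explicit treatment of $A_1$. The key algebraic manoeuvre is the symmetric/antisymmetric decomposition
\[
A_0 p^n + A_1 p^{n-1} = \tfrac{1}{2}(A_0+A_1)(p^n+p^{n-1}) + \tfrac{1}{2}(A_0-A_1)(p^n-p^{n-1}) = \tfrac{1}{2}A(p^n+p^{n-1}) + \tfrac{1}{2}(A_0-A_1)(p^n-p^{n-1}).
\]
Substituting this into \eqref{si-mc} rewrites the scheme as
\[
\left(M + \tfrac{\tau}{2}(A_0 - A_1)\right)\frac{p^n - p^{n-1}}{\tau} + \tfrac{1}{2}A(p^n + p^{n-1}) = F^n,
\]
which has exactly the same structure as the reformulation used in the proof of Theorem~\ref{t:t2}, only with $A$ inside the first operator replaced by $A_0 - A_1$. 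This is the place where the hypothesis $A_0 - A_1 \geq 0$ enters: together with $M$ being self-adjoint positive definite, it guarantees that $M + \tfrac{\tau}{2}(A_0-A_1)$ is self-adjoint positive definite, so its square-root norm and its inverse norm are well defined.

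The next step is to take the inner product of the reformulated equation with $\frac{p^n - p^{n-1}}{\tau}$. The first term becomes $\bigl\| \frac{p^n-p^{n-1}}{\tau} \bigr\|^{2}_{M + \frac{\tau}{2}(A_0-A_1)}$. For the second term, using $A = A^T$, one obtains the telescoping identity
\[
\tfrac{1}{2}\left( A(p^n + p^{n-1}), \tfrac{p^n - p^{n-1}}{\tau} \right) = \tfrac{1}{2\tau}\bigl[(A p^n, p^n) - (A p^{n-1}, p^{n-1})\bigr],
\]
which is the discrete analogue of $\tfrac{1}{2}\tfrac{d}{dt}\|p\|_A^2$ used in Theorem~\ref{t:t1}. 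For the right-hand side one applies Young's (Cauchy) inequality in the $\bigl(M + \tfrac{\tau}{2}(A_0-A_1)\bigr)$-norm, exactly as in Theorem~\ref{t:t2} but with this modified operator, to absorb the $\bigl\| \tfrac{p^n-p^{n-1}}{\tau}\bigr\|$ term on the left.

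After cancellation, one is left with
\[
(A p^n, p^n) - (A p^{n-1}, p^{n-1}) \leq \tfrac{\tau}{2}\, \|F^n\|^{2}_{(M + \frac{\tau}{2}(A_0-A_1))^{-1}},
\]
which is precisely the desired estimate \eqref{t3}, and unconditional stability follows since no constraint on $\tau$ has been imposed. The only non-routine step is the initial splitting identity, and the only subtle hypothesis-check is verifying that $A_0 - A_1 \geq 0$ is exactly what is needed to make the weighted norm on the left meaningful; once those are in place, the argument is a direct transcription of the coupled-scheme proof.
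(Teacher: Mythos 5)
Your proposal is correct and follows essentially the same route as the paper's own proof: you rewrite the scheme in the canonical form $\left(M + \tfrac{\tau}{2}(A_0 - A_1)\right)\tfrac{p^n - p^{n-1}}{\tau} + \tfrac{1}{2}A(p^n + p^{n-1}) = F^n$ and run the standard energy argument with the weighted Cauchy (Young) inequality. If anything, your write-up is more careful than the paper's, which contains typos at the corresponding steps (it writes $M+\tfrac{\tau}{2}A$ where $M+\tfrac{\tau}{2}(A_0-A_1)$ is meant, and a plus sign where the telescoping difference $(Ap^n,p^n)-(Ap^{n-1},p^{n-1})$ should appear), and you correctly identify that $A_0 - A_1 \geq 0$ is needed precisely to make the weighted norm and its inverse well defined.
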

\begin{proof}
The equation \eqref{si-mc} can be written as follows
\[
\left(M + \frac{\tau}{2} (A_0 - A_1) \right) \frac{ p^n - p^{n-1} }{\tau} + \frac{1}{2} A (p^n  + p^{n-1}) = F^n.
\]
After multiplication by $ \frac{ p^n - p^{n-1} }{\tau}$, we obtain
\[
\left( \left(M + \frac{\tau}{2} A \right) \frac{ p^n - p^{n-1} }{\tau} , \frac{ p^n - p^{n-1} }{\tau} \right) 
+ \frac{1}{2}  \left( A (p^n + p^{n-1}),   \frac{ p^n - p^{n-1} }{\tau} \right) = \left(F^n, \frac{ p^n - p^{n-1} }{\tau} \right). 
\]
Next, using  a Cauchy inequality 
for the right-hand side
\[
\left(F^n, \frac{ p^n - p^{n-1} }{\tau} \right) \leq 
\left\| \frac{ p^n - p^{n-1} }{\tau} \right\|_{ \left(M + \frac{\tau}{2}  (A_0 - A_1) \right) }^2 
+ \frac{1}{4} 
\left\| F^n \right\|_{ \left(M + \frac{\tau}{2}  (A_0 - A_1) \right)^{-1}}^2,
\]
we obtain the following estimate
\[
\frac{1}{2 \tau} \left( A (p^n + p^{n-1}),   (p^n - p^{n-1})  \right)
=
\frac{1}{2 \tau} (A p^n, p^n) + 
\frac{1}{2 \tau} (A p^{n-1} p^{n-1}) 
\leq 
\frac{1}{4} 
\left\| F^n \right\|_{ \left(M + \frac{\tau}{2}  (A_0 - A_1) \right)^{-1}}^2,
\]
or
\[
(A p^n, p^n) \leq (A p^{n-1} p^{n-1})  + \frac{\tau}{2} 
\left\| F^n \right\|_{ \left(M + \frac{\tau}{2}  (A_0 - A_1) \right)^{-1}}^2.
\]
This ensures the stability of the semi-implicit scheme with respect to the initial condition and the right-hand side. 
\end{proof}

Let us sort the continuum based on their permeability,  $k_1 < k_2 <...<k_L$, and use the calculated continuum solution in the solution of the following equation. Therefore, we can construct the following schemes
\begin{itemize}
\item (\textit{L-scheme})
\[
A_0 = \begin{pmatrix}
D_1+\sum_{\beta \neq 1} Q_{1\beta} & 0 & ... & 0 \\
-Q_{21} & D_2+ \sum_{\beta \neq 2} Q_{2\beta} & ... &  0 \\
 ... &  ... &  ...  &   ...   \\
-Q_{L1} & -Q_{L2} &  ... & D_L+\sum_{\beta \neq L} Q_{L \beta}
\end{pmatrix},
\]\[
A_1 = A - A_0 = 
 \begin{pmatrix}
0 & -Q_{12} & ...  & -Q_{1L} \\
0 &0 & ... & -Q_{2L} \\
 ... &  ... &  ...  &   ...   \\
0   & 0 & ... & 0
\end{pmatrix}.
\]
Here we first calculate a continuum with smaller permeability, and  $A_0$ is the lower-triangular matrix. 

\item (\textit{U-scheme})
\[
A_0 = \begin{pmatrix}
D_1+\sum_{\beta \neq 1} Q_{1\beta} & -Q_{12}  & ... & -Q_{1L} \\
0 & D_2+ \sum_{\beta \neq 2} Q_{2\beta} & ... &  -Q_{2L} \\
 ... &  ... &  ...  &   ...   \\
0 & 0 &  ... & D_L+\sum_{\beta \neq L} Q_{L \beta}
\end{pmatrix},
\]\[
A_1 = A - A_0 = 
 \begin{pmatrix}
0 & 0& ...  & 0 \\
-Q_{21} &0 & ... & 0 \\
 ... &  ... &  ...  &   ...   \\
-Q_{L1}   & -Q_{L2} & ... & 0
\end{pmatrix}.
\]
Here we calculate the first continuum with larger permeability, and  $A_0$ is the upper-triangular matrix. 
\end{itemize}

We note that, \textit{D-,L-} and \textit{U-schemes} are all satisfy condition \eqref{dc-op} and therefore unconditionally stable and estimate  \eqref{t3} is valid.

\section{Decoupling schemes for the coarse grid nonlocal multicontinuum approximation}


In this section, we extend the presented decoupling technique for multiscale approximation using a nonlocal multicontinuum (NLMC) method. The NLMC method is the accurate multiscale multicontinuum approximation technique based on the nonlocal representation of the diffusion part of the operator. We show that the system obtained using the NLMC method has the same size as a regular finite volume approximation on a coarse grid but provides a very accurate solution with a significant reduction of the discrete system size. First, we present a regular implicit time approximation for a multiscale multicontinuum system in the coarse grid. Then we extend a decoupling technique for multiscale multicontinuum approximation for flow problems in fractured media.

\subsection{Multiscale approximation by space using NLMC}

For accurate approximation by space on the coarse grid of the coupled system of equations,  we use a  nonlocal multicontinuum (NLMC) approach. We construct multiscale basis functions by solving the local problems in the local domain, satisfying the coupled flow equations subject to continuum separation constraints. Given constraints provide meaning to the coarse scale solution: the local solution has zero mean in another continuum except for the one for which it is formulated.
The resulting multiscale basis functions have spatial decay properties in local domains and can separate the continuum.
The resulting basis functions will be used to construct the upscaled model. The resulting approximation provides an accurate approximation by a nonlocal approximation of the fluxes.

Let $K^+_i$ be an oversampled region for the coarse cell $K_i$ obtained by enlarging $K_i$ by several coarse cell layers.
We  construct a set of basis functions $\psi^{i,\beta} = (\psi^{i,\beta} _1, \psi^{i,\beta} _2,...,\psi^{i,\beta} _L)$  in local domain $K_i^+$ ($\psi^{i,\beta}_{\alpha} \in K^{\alpha}_{i+}$) related to the each continuum using the following constrains
\[
\frac{1}{|K^{\alpha}_j|}\int_{K^{\alpha}_j} \psi^{i,\beta}_{\alpha} dx = \delta_{ij}  \delta_{\alpha \beta},  
\quad \forall K^{\alpha}_j \in K_i^+,
\]
where $K^{\alpha}_j = \Omega_{\alpha} \cap K_j$. 
The resulting function has a mean value of one on the coarse cell $K_i$ for the current continuum and has a mean value of zero on all other coarse cells within $K_i^+$ and all coarse cells for another continuum.  
For the construction of the multiscale basis functions $\psi^{i,\beta}$,  we solve the following constrained energy minimizing problem in the oversampled local domain ($K_i^+$) using a fine-grid approximation for the coupled system
\begin{equation}
\label{eq:basis}
\begin{pmatrix}
D_1^{i+}+ \sum_{\beta \neq 1} Q^{i+}_{1\beta} & ... & -Q^{i+}_{1L} 
& C^T_1 & ... & 0 \\
... & ... & ... & ... & ... & ... \\
-Q^{i+}_{L1} & ...& D^{i+}_L + \sum_{\beta \neq L} Q^{i+}_{L\beta} & 0  &  ... &  C^T_L \\
C_1 & ...& 0 & 0  & ...& 0 \\
... & ... & ... & ... & ... & ... \\
0 & ...& C_L &  0 & ...  & 0\\
\end{pmatrix}
\begin{pmatrix}
\psi^{i,\beta}_1 \\
... \\
\psi^{i,\beta}_L \\
\mu_1 \\
... \\
\mu_L \\
\end{pmatrix} =
\begin{pmatrix}
0 \\
...\\
0 \\
F^{\beta,i}_1 \\
...\\
F^{\beta,i}_L \\
\end{pmatrix}
\end{equation}
with  the zero Dirichlet boundary conditions on $\partial K^+_i$ for $\psi^{i,\beta}$ . 
Note that we used Lagrange multipliers $\mu_{\alpha}$ to impose the constraints. 
We set $F_{\alpha}^{\beta,i} = \{F^{\beta,i}_{\alpha,j}\}$,  where $F^{\beta,i}_{\alpha,j}$ is  related to the $K^{\beta}_j$ and  $F^{\beta,i}_{\alpha,j} =  \delta_{ij}  \delta_{\alpha \beta}$. 

By combining multiscale basis functions, we obtain the following multiscale space and projection matrix
\[
V_H = \text{span} \{ \psi^{i,\beta}=(\psi^{i,\beta}_1, \psi^{i,\beta}_2,...,\psi^{i,\beta}_L), \quad
\beta = \overline{1,L}, \quad
 i = \overline{1,N_c} \}, 
\]\[
R = \begin{pmatrix}
R_{11} & R_{12}& ...& R_{1L} \\
R_{21} & R_{22}&  ...& R_{2L} \\
 ...&  ...&  ...&  ... \\
R_{L1} & R_{L2}  &  ...& R_{LL}
\end{pmatrix},
\quad 
R_{\alpha \beta} = \begin{pmatrix}
\psi^{0,\alpha}_{\beta} \\
\psi^{1,\alpha}_{\beta} \\
... \\
\psi^{N_c,\alpha}_{\beta} 
\end{pmatrix},
\]
where $\psi^{i,\alpha}_{\beta} \in K^{\alpha}_{i+}$. 

To construct a coarse scale approximation, we use a projection approach and obtain the following approximation for matrix $A$ on the coarse grid
\[
\bar{A} = R A R^T.
\]

We remark that $\bar{p}_{\alpha}$ is the average cell solution on coarse grid element for continuum $\alpha$ and 
we have the following coarse scale coupled system for $\bar{p} = (\bar{p}_1, \bar{p}_2, ...,\bar{p}_L)$
\begin{equation}
\label{app-nlmc}
\bar{M} \frac{\partial \bar{p}}{\partial t} + \bar{A} \bar{p} = \bar{F}, \quad
\bar{A} = \bar{D} + \bar{Q},
\end{equation}
with 
\[
\bar{M} = \begin{pmatrix}
\bar{M}_1 & 0 & ... & 0 \\
0 & \bar{M}_2 & ... &  0 \\
 ... &  ... &  ...  &   ...   \\
0 & 0 &  ... & \bar{M}_L
\end{pmatrix},  \quad
\bar{Q} = \begin{pmatrix}
\sum_{\beta \neq 1} \bar{Q}_{1\beta} & -\bar{Q}_{12} & ...  & -\bar{Q}_{1L} \\
-\bar{Q}_{21} & \sum_{\beta \neq 2} \bar{Q}_{2\beta} & ... & -\bar{Q}_{2L} \\
 ... &  ... &  ...  &   ... &  \\
-\bar{Q}_{L1}   & -\bar{Q}_{L2} & ... & \sum_{\beta \neq L} \bar{Q}_{L \beta}
\end{pmatrix},\quad
\bar{F} = \begin{pmatrix}
\bar{F}_1 \\
\bar{F}_2 \\
 ...   \\
\bar{F}_L
\end{pmatrix}, 
\]
and 
\[
\bar{D} = RDR^T = \begin{pmatrix}
\bar{D}_{11} & \bar{D}_{12} & ... & \bar{D}_{1L} \\
\bar{D}_{21} & \bar{D}_{22} & ... &  \bar{D}_{2L} \\
 ... &  ... &  ...  &   ...   \\
\bar{D}_{L1} & \bar{D}_{L2} &  ... & \bar{D}_{LL}
\end{pmatrix}, 
\quad 
\bar{D}_{\alpha\beta} = \sum_{\zeta} R_{\alpha\zeta} D_{\zeta} R_{\beta\zeta}^T.
\] 
Due  to the constraints that we use for multiscale basis construction, we have 
$\bar{M}_{\alpha} = R_{\alpha\alpha} M_{\alpha} R_{\alpha\alpha}^T$, 
$\bar{Q}_{\alpha\beta} = R_{\alpha\alpha} Q_{\alpha\beta} R_{\beta\beta}^T$ and 
$\bar{F}_{\alpha} = R_{\alpha\alpha} F_{\alpha}$.

For $c_{\alpha} = \const$, $\sigma_{\alpha \beta} = \const$ and $f_{\alpha} = \const$ in each coarse cell $K_i^{\alpha}$,  the mass matrix, continuum coupling matrix and right-hand side vector can be directly calculated on the coarse grid and similar to the regular finite volume approximation
\[
\bar{M}_{\alpha} = \{{m}_{\alpha,ij}\}, \quad 
\bar{Q}_{\alpha \beta} = \{{q}_{\alpha \beta, ij}\}, \quad 
\bar{F}_{\alpha} = \{{f}_{\alpha,j} |K^{\alpha}_j|\}, \quad 
\]\[
{m}_{\alpha,ij} =
\left\{\begin{matrix}
{c}_{\alpha,i} |K^{\alpha}_i| & i = j, \\
0 & otherwise
\end{matrix}\right. ,  
\quad
q_{\alpha \beta,ij} =
\left\{\begin{matrix}
{\sigma}_{\alpha \beta,ij} &  K^{\alpha}_i \cap K^{\beta}_j \neq 0, \\
0 & otherwise
\end{matrix}\right. .
\]
However,  the matrix $D$ is non-local and provides a good approximation on the coarse grid due to the coupled multiscale basis construction. 
Similarly to the finite volume approximation, in the NLMC method, we have $M=M^T \geq 0$ and $A = A^T \geq 0$.

\subsection{Implicit approximation by time (coupled scheme)}

To approximate by time, we can use an implicit scheme  for multiscale approximation \eqref{app-nlmc}
\begin{equation}
\label{is-nlmc}
\bar{M} \frac{\bar{p}^{n} - \bar{p}^{n-1}}{\tau} + \bar{A} \bar{p}^{n} = \bar{F}^{n}, \quad n = 1,2,...
\end{equation}
with initial condition
\[
\bar{p}^0 = \bar{p}_0, 
\]
This system is coupled on the coarse grid and the size of system is  $N_H = \sum_{\alpha} N^{\alpha}_H$. 
Because the upscaled coarse grid problem is constructed based on the NLMC method,  the coarse grid matrices $\bar{M}$ and $\bar{Q}$ are the same as in finite volume approximation on the coarse grid.  Therefore, we can obtain similar estimates for the implicit scheme \eqref{is-nlmc}.
\begin{equation}
\label{t2c}
||\bar{p}^n||_{A}^2 \leq ||\bar{p}^{n-1}||_{\bar{A}}^2 
+ \frac{\tau}{2} ||\bar{F}^n||^2_{\left(\bar{M} + \frac{\tau}{2} \bar{A} \right)^{-1}}.
\end{equation}
The proof of the stability is similar to the fine grid system (see Theorem \ref{t:t2}).

\subsection{Decoupled schemes}

The system of equations constructed based on the implicit time approximation is coupled by nonlocal flux approximation and coupling term.  Similarly to the fine grid model, we use an additive operator representation to separate continuum and decouple calculations
\begin{equation}
\label{si-nlmc}
\bar{M} \frac{ \bar{p}^n - \bar{p}^{n-1} }{\tau} + \bar{A}_0 \bar{p}^n + \bar{A}_1 \bar{p}^{n-1} = \bar{F}^n, 
n = 1,2,...
\end{equation}
where 
\[
\bar{A} = \bar{A}_0 + \bar{A}_1.
\]
We use three choices of the operator $\bar{A}_0$ that separate calculations for each continuum
\begin{itemize}
\item (\textit{D-scheme})
\[
\bar{A}_0 =  \begin{pmatrix}
\bar{A}_{11}& 0 & ... & 0 \\
0 & \bar{A}_{22} & ... &  0 \\
 ... &  ... &  ...  &   ...   \\
0 & 0 &  ... & \bar{A}_{LL}
\end{pmatrix} = 
\begin{pmatrix}
\bar{D}_{11}+\sum_{\beta \neq 1} \bar{Q}_{1\beta} & 0 & ... & 0 \\
0 & \bar{D}_{22}+ \sum_{\beta \neq 2} \bar{Q}_{2\beta} & ... &  0 \\
 ... &  ... &  ...  &   ...   \\
0 & 0 &  ... & \bar{D}_{LL}+\sum_{\beta \neq L} \bar{Q}_{L \beta}
\end{pmatrix},
\]
where $\bar{A}_0$ is the diagonal matrix. 

\item (\textit{L-scheme})
\[
\bar{A}_0 = 
 \begin{pmatrix}
\bar{A}_{11}& 0 & ... & 0 \\
\bar{A}_{21} & \bar{A}_{22} & ... &  0 \\
 ... &  ... &  ...  &   ...   \\
\bar{A}_{L1} & \bar{A}_{L2} &  ... & \bar{A}_{LL}
\end{pmatrix} = 
\begin{pmatrix}
\bar{D}_{11}+\sum_{\beta \neq 1} \bar{Q}_{1\beta} & 0 & ... & 0 \\
\bar{D}_{21}-\bar{Q}_{21} & \bar{D}_{22}+ \sum_{\beta \neq 2} \bar{Q}_{2\beta} & ... &  0 \\
 ... &  ... &  ...  &   ...   \\
\bar{D}_{L1}-\bar{Q}_{L1}   & \bar{D}_{L2}-\bar{Q}_{L2}&  ... & \bar{D}_{LL}+\sum_{\beta \neq L} \bar{Q}_{L \beta}
\end{pmatrix},
\]
where we first calculate continuum with smaller permeability, and  $\bar{A}_0$ is the lower-triangular matrix. 

\item (\textit{U-scheme})
\[
\bar{A}_0 =
\begin{pmatrix}
\bar{A}_{11}& \bar{A}_{12} & ... & \bar{A}_{1L} \\
0 & \bar{A}_{22} & ... &  \bar{A}_{2L} \\
 ... &  ... &  ...  &   ...   \\
0 & 0 &  ... & \bar{A}_{LL}
\end{pmatrix} = 
\begin{pmatrix}
\bar{D}_{11}+\sum_{\beta \neq 1} \bar{Q}_{1\beta} & \bar{D}_{12}-\bar{Q}_{12}  & ... & \bar{D}_{1L}-\bar{Q}_{1L} \\
0 & \bar{D}_{22}+ \sum_{\beta \neq 2} \bar{Q}_{2\beta} & ... & \bar{D}_{2L} -\bar{Q}_{2L} \\
 ... &  ... &  ...  &   ...   \\
0 & 0 &  ... & \bar{D}_{LL}+\sum_{\beta \neq L} \bar{Q}_{L \beta}
\end{pmatrix},
\]
where we first calculate continuum with larger permeability, and  $\bar{A}_0$ is the upper-triangular matrix. 
\end{itemize}
Here $\bar{A}_1 = \bar{A} - \bar{A}_0$. 
The resulting system is decoupled, and the size of the coarse-grid system for each continuum is $N^{\alpha}_H$. 
All three schemes are unconditionally stable, and the following estimate is valid
\begin{equation}
\label{t3c}
||\bar{p}^n||_{\bar{A}}^2 \leq ||\bar{p}^{n-1}||_{\bar{A}}^2 
+ \frac{\tau}{2} ||\bar{F}^n||^2_{\left(\bar{M} + \frac{\tau}{2} (\bar{A}_0 - \bar{A}_1) \right)^{-1}}.
\end{equation}
The proof of the stability of the semi-implicit scheme with respect to the initial condition and the right-hand side is similar to the Theorem \ref{t:t3}.

\section{Numerical results}

We consider the model problem in multicontinuum media in the domain $\Omega = [0,1]^2$ with 25 fracture lines. The fracture distribution is depicted in Figure \ref{fig:mesh20}.  We set source term in fractures located in lower left area of the domain ($x \in [0.1, 0.15] \times [0.1, 0.15]$) and upper right area ($x \in [0.6, 0.65] \times [0.85, 0.9]$).  We set $f_f = q_w (p - p_w)$ with $p_w = 1.2$ and $q_w = 10^5$.  As initial condition, we set $p_{\alpha,0} = 1$ and  perform simulations for $T = 0.002$ with $N_T=50$ time steps, $\tau = T/N_T$.

\begin{figure}[h!]
\centering
\includegraphics[width=0.5 \textwidth]{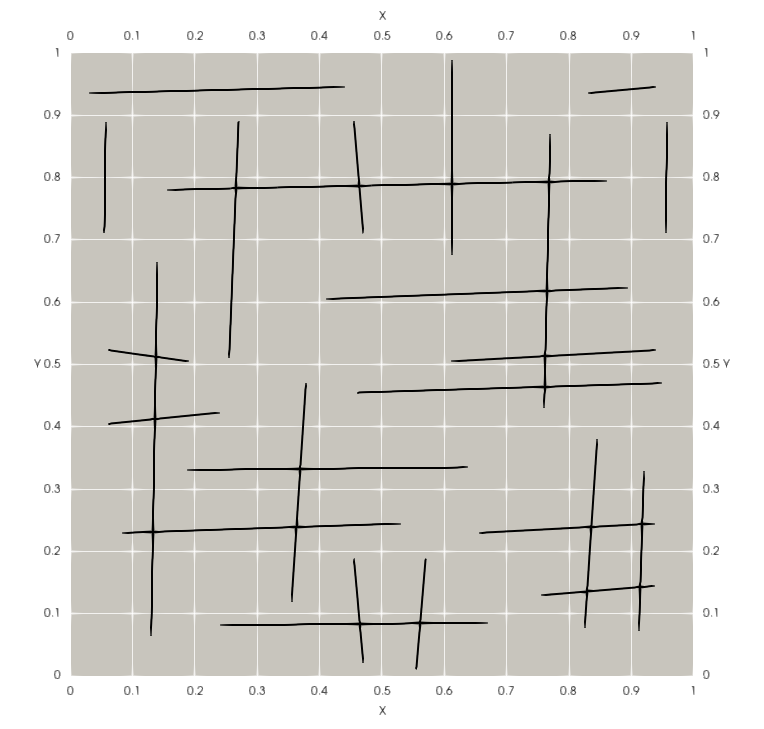}
\caption{Fractures geometry}
\label{fig:mesh20}
\end{figure}

We consider two test problems: 
\begin{itemize}
\item \textit{2C}: Two-continuum media, where we have a porous matrix and fracture continuum. We set $c_2 = 1$ and $k_2 = 10^6$ for fracture continuum,  $c_1 = 0.1$ and $k_1 = 1$  for porous matrix continuum.  
\item \textit{3C}: Three-continuum media, where we have porous matrix continuum, natural fracture continuum, and hydraulic fracture continuum. We set $c_3 = 1$ and $k_3 = 10^6$ for hydraulic fracture continuum,  $c_2 = 0.1$ and $k_2 = 1$ for natural fracture continuum,  and  $c_1 = 0.05$ and $k_1 = 10^{-3}$ for porous matrix continuum.  
\end{itemize}
The fine grid is $200 \times 200$ structured grid for domain $\Omega$ with quadratic cells. Fracture grid  (lower dimensional, 1D) contains  $2684$ cells.  For multiscale approach, we consider simulations on two coarse grids: (1) $20 \times 20$ coarse grid and (2) $40 \times 40$ coarse grid.   

Implementation is performed using python programming language, and  PETSc library \cite{balay2019petsc} for the solution of the linear system of equations at each time step. Because the resulting linear system is symmetric and positive definite, we use a typical iterative solver, a conjugate gradient (CG) iterative solver with ILU preconditioner. Simulations are performed on MacBook Pro (2.3 GHz Quad-Core Intel Core i7 with 32 GB 3733 MHz LPDDR4X). In this work, we did not compare different iterative solvers or preconditioners. We will consider it in future works for large three-dimensional problems. The research focuses on the decoupling schemes, where we perform numerical investigation for regular finite volume approximation and coarse grid approximation based on the nonlocal multicontinuum method (NLMC).

\subsection{Decoupling schemes for the fine grid finite volume approximation}

We first present numerical results for fine grid approximation using the coupled scheme.  
To compare coupled and decoupled methods, we take the solution of the coupled system as a reference solution and calculate the relative error  in percentage on the fine grid
\[
e_h^n = \frac{||p^n - \tilde{p}^n||_{L_2}}{||p^n||_{L_2}} \times 100 \%, \quad 
||p||_{L_2} =  \sqrt{(p, p)} 
\]
where $n$ is the time layer, $p$ is the reference solution (coupled scheme), and $\tilde{p}$ is the solution using the decoupled scheme.

\begin{figure}[h!]
\centering
\includegraphics[width=0.32 \textwidth]{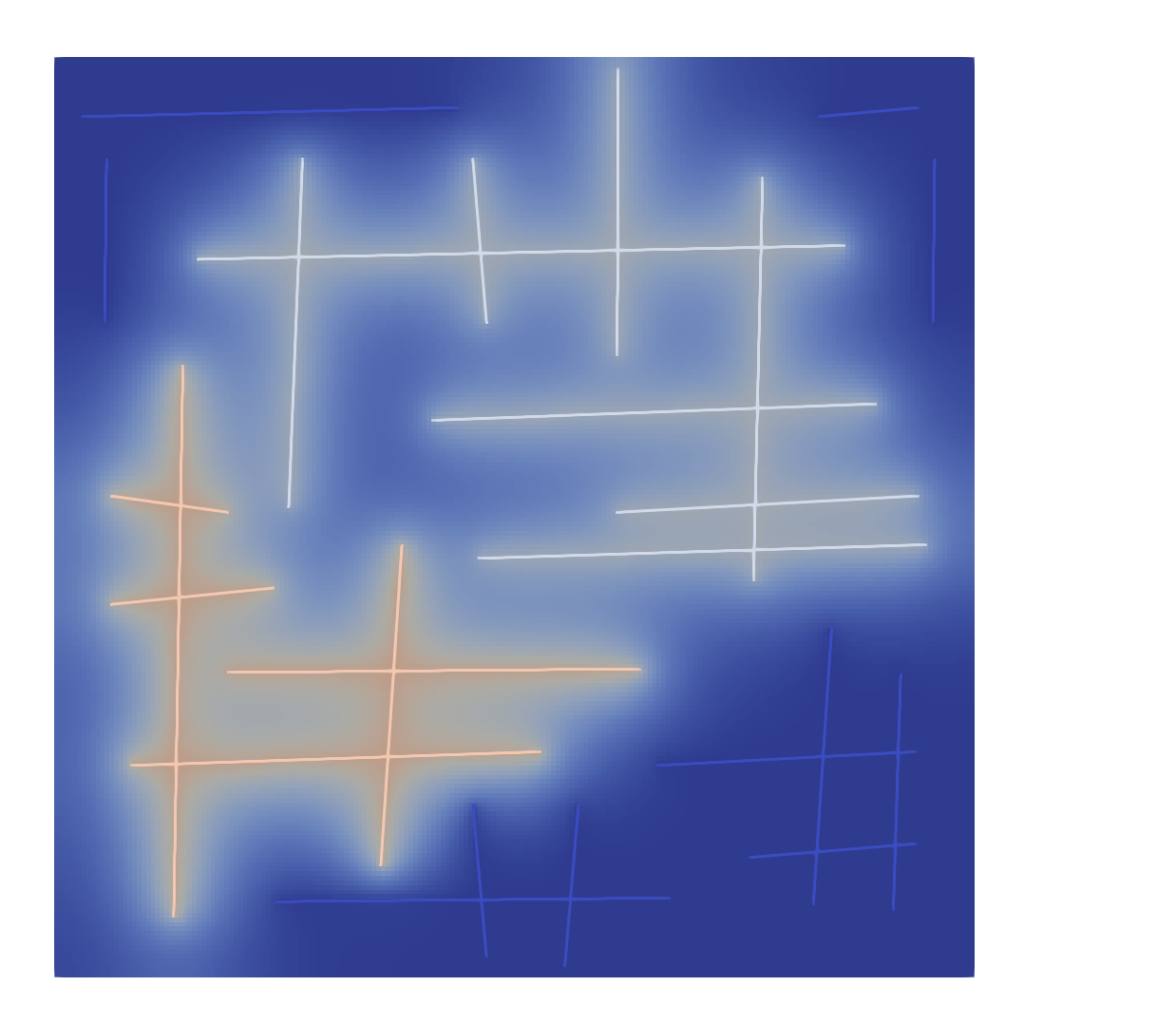}
\includegraphics[width=0.32 \textwidth]{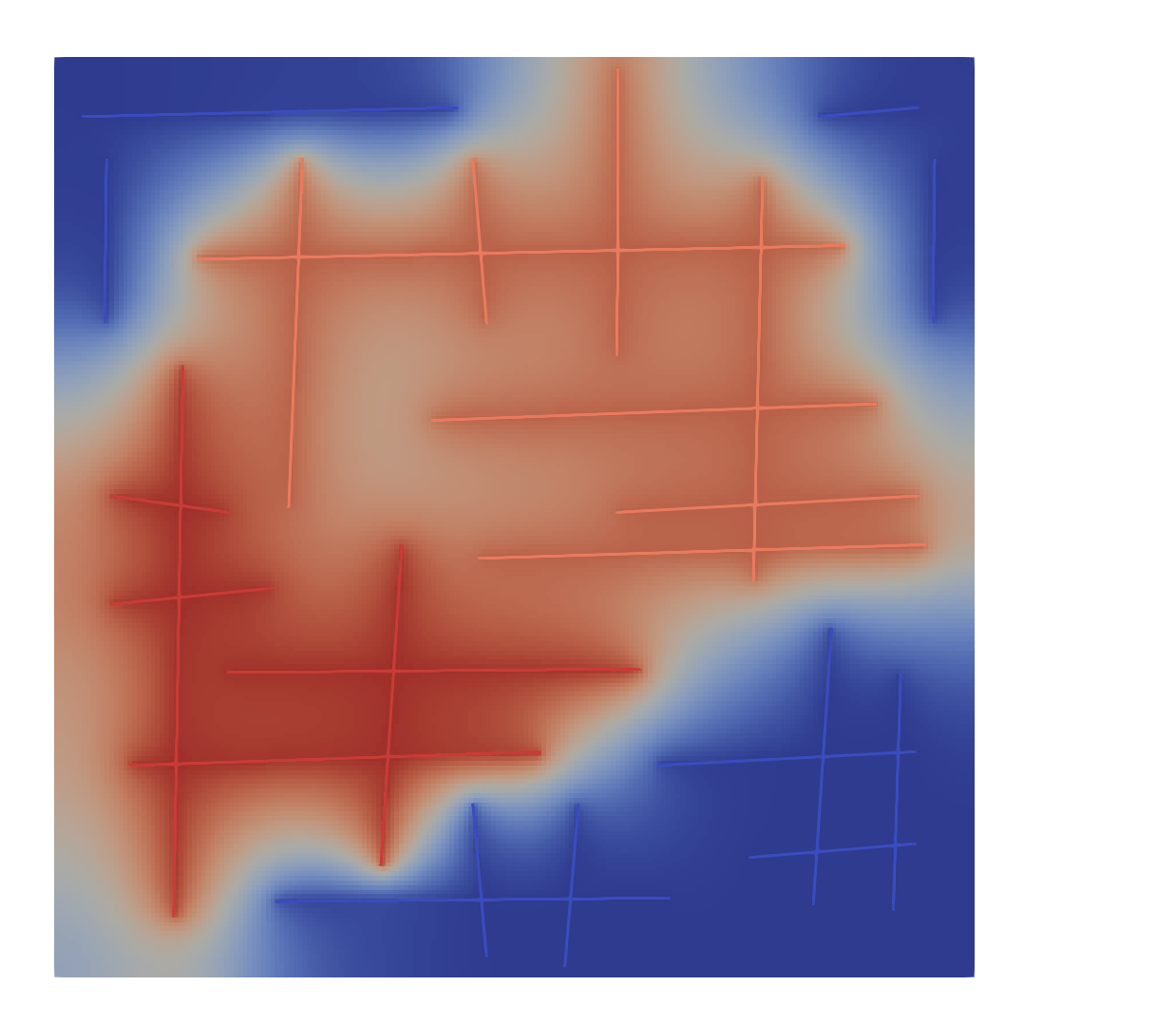}
\includegraphics[width=0.32 \textwidth]{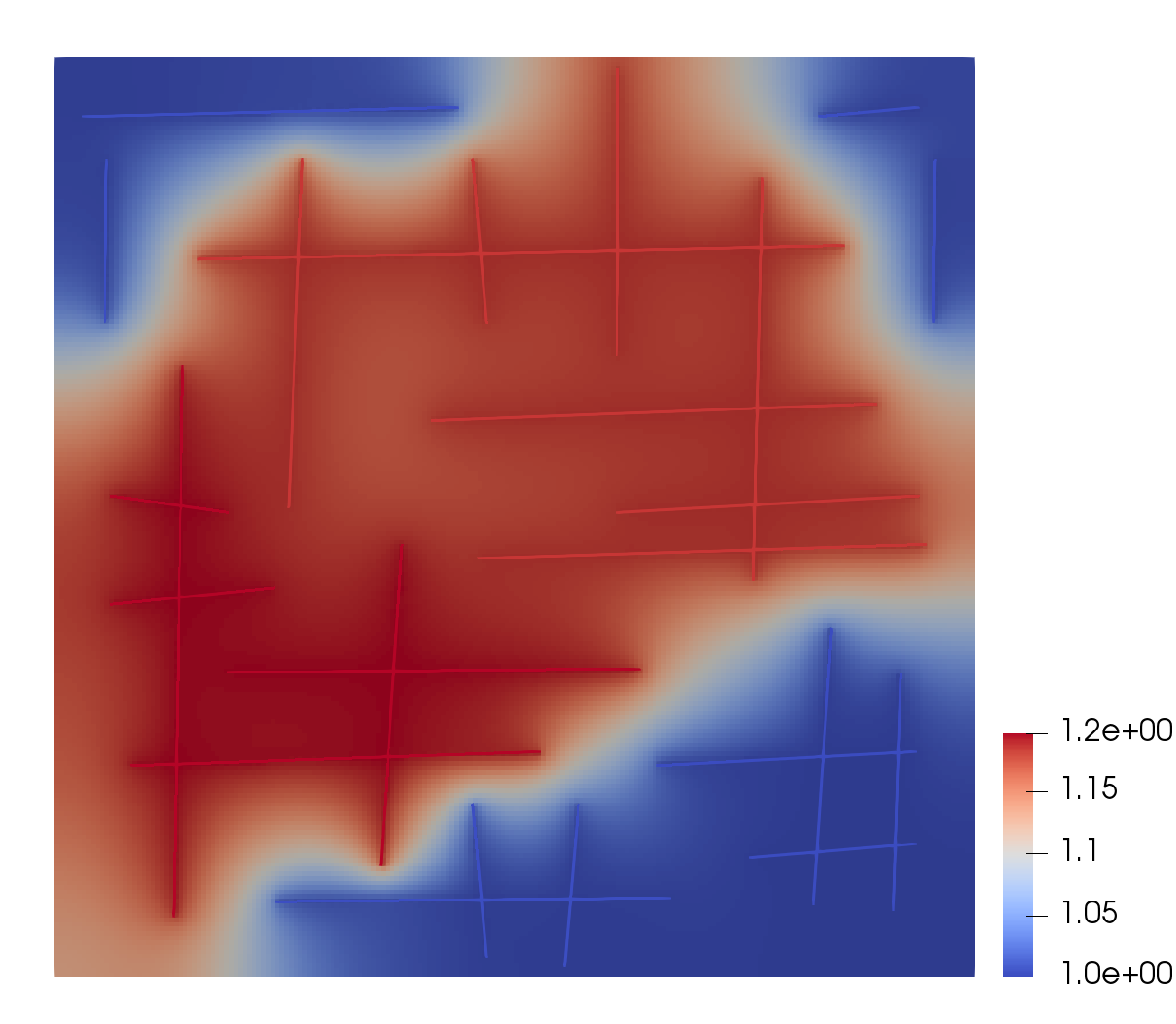}
\caption{Solution on the fine grid using the coupled scheme for two-continuum media (\textit{2C}). Solution $p^n$ for $n=10, 30$ and $50$ (from left to right)}
\label{fig:2c-u}
\end{figure}

\begin{figure}[h!]
\centering
\includegraphics[width=0.32 \textwidth]{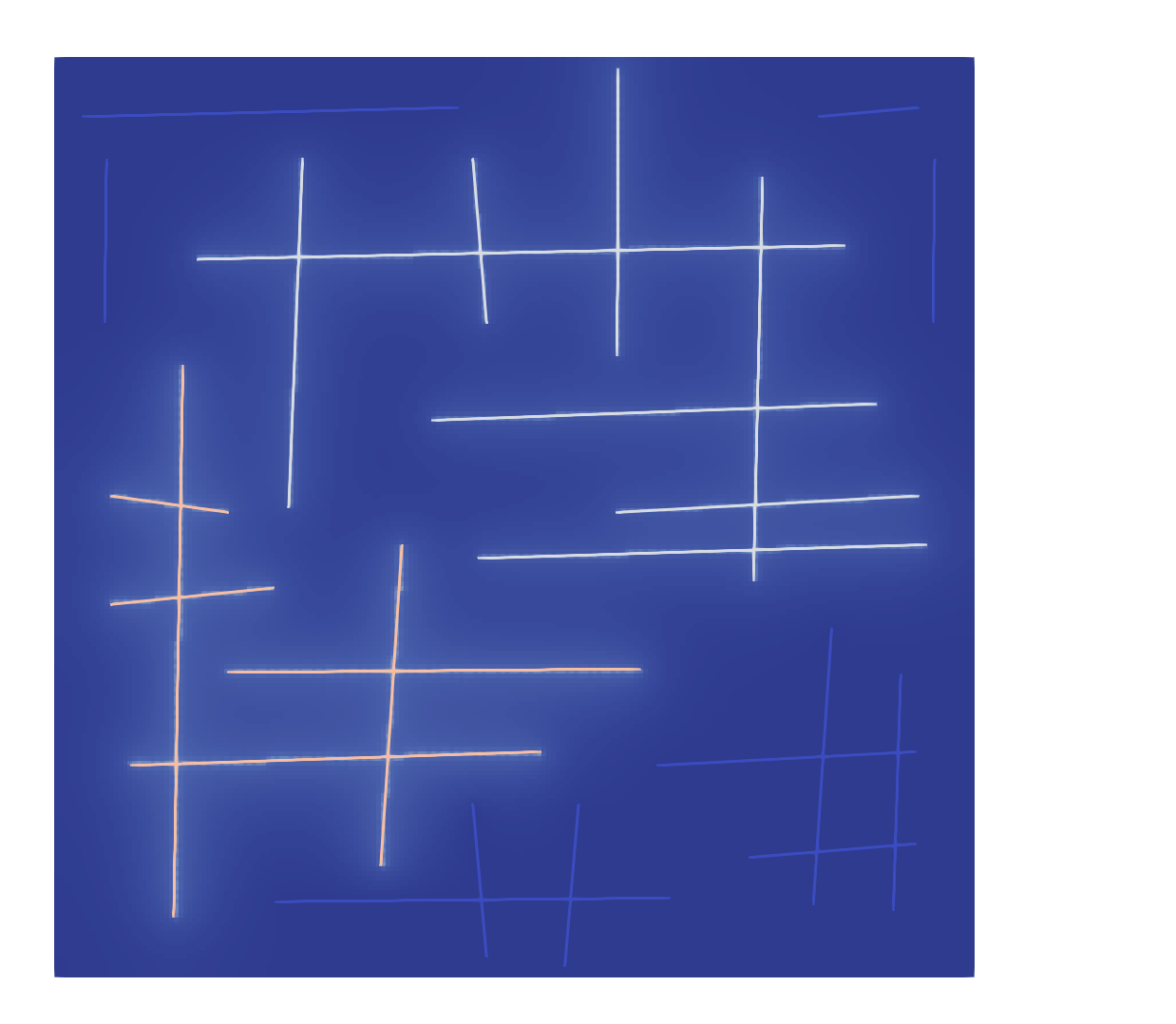}
\includegraphics[width=0.32 \textwidth]{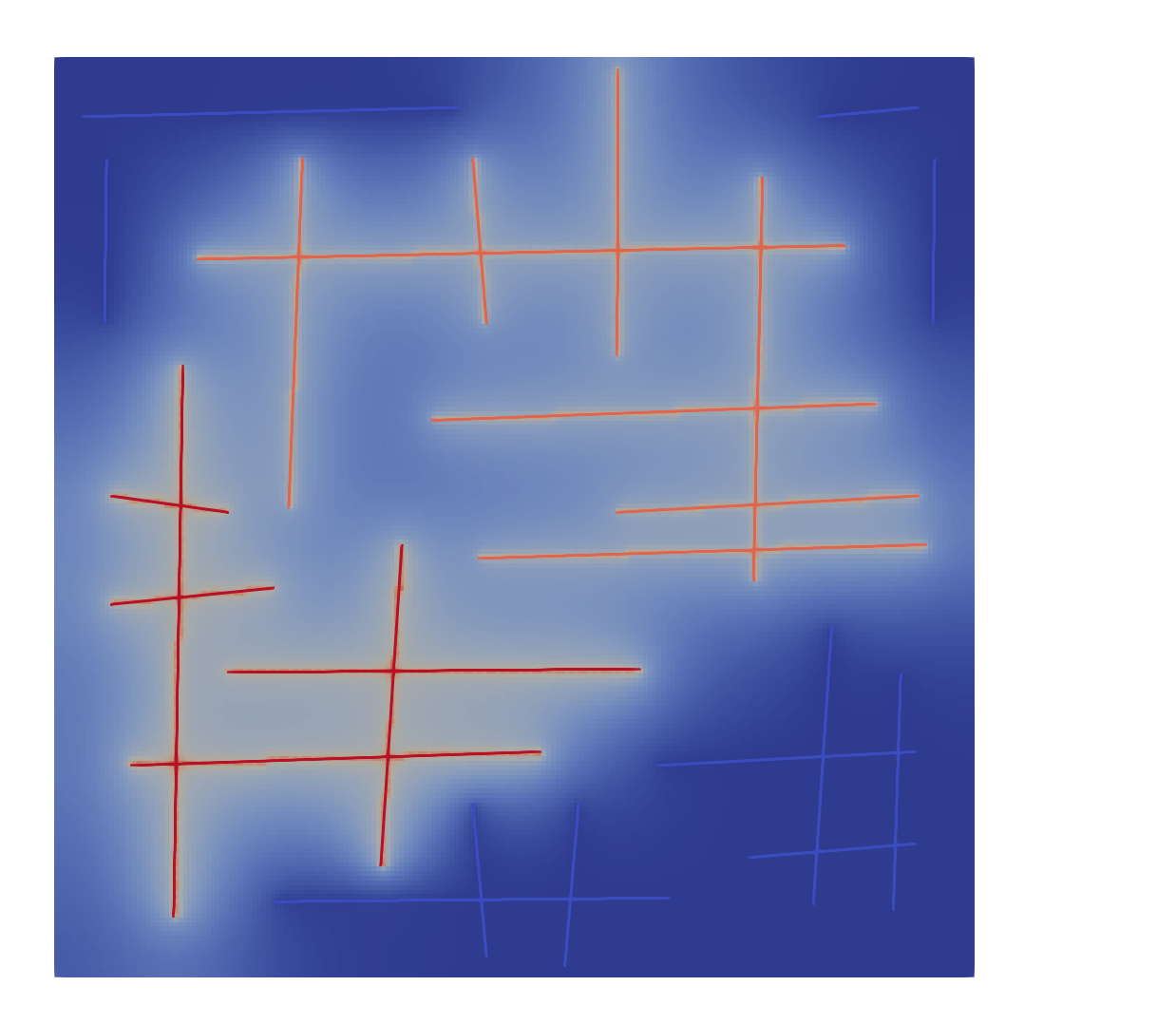}
\includegraphics[width=0.32 \textwidth]{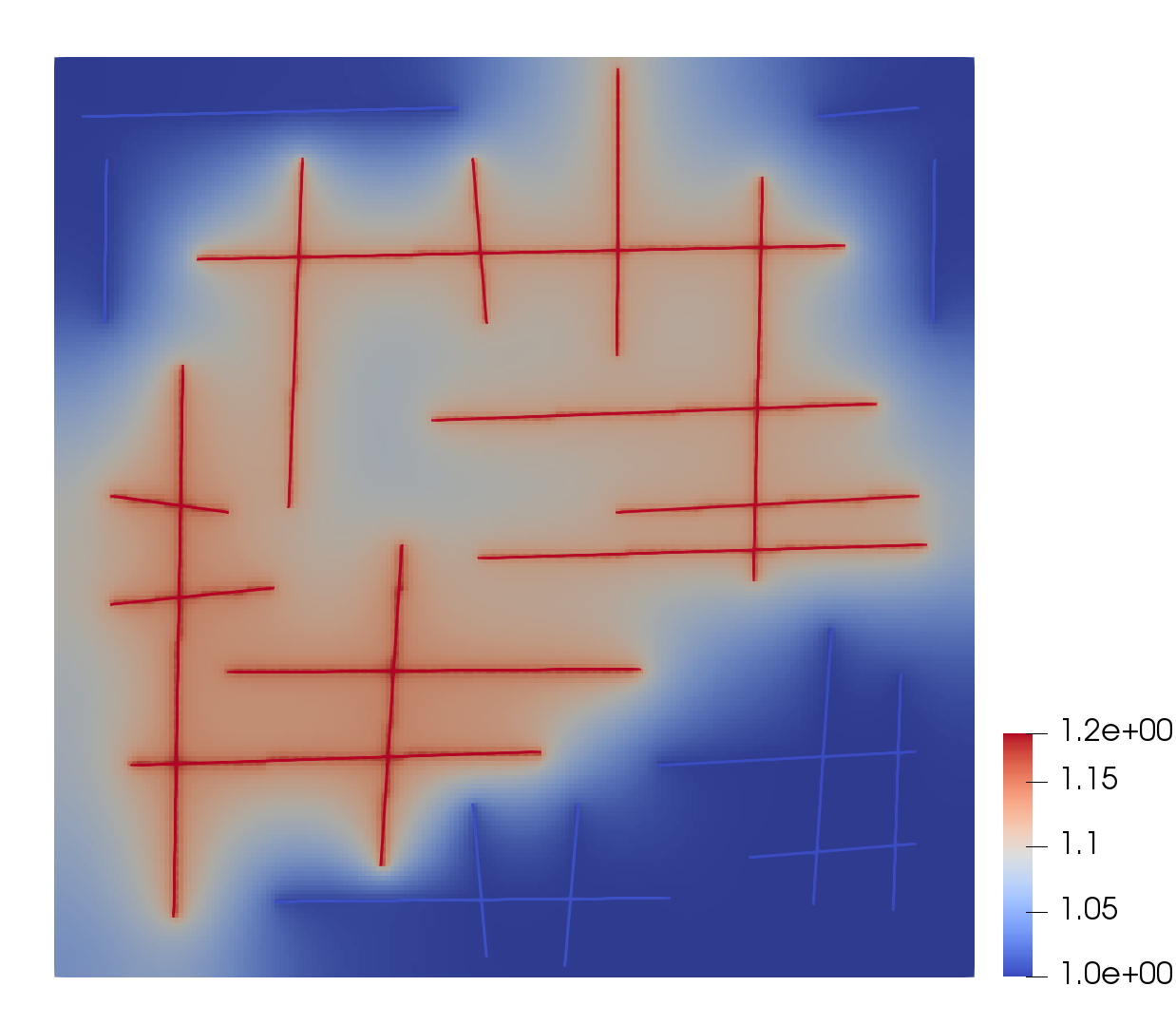}\\
\includegraphics[width=0.32 \textwidth]{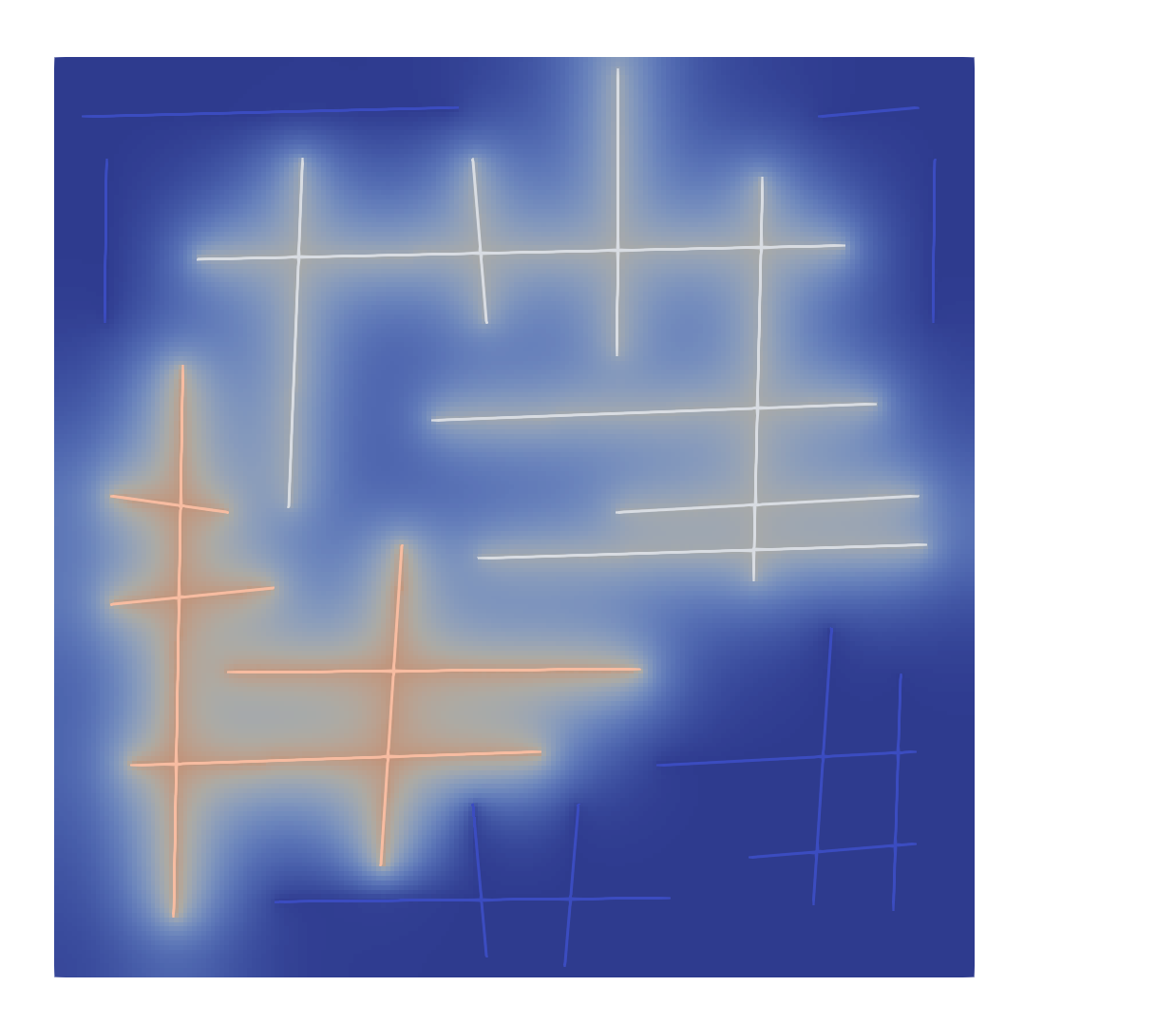}
\includegraphics[width=0.32 \textwidth]{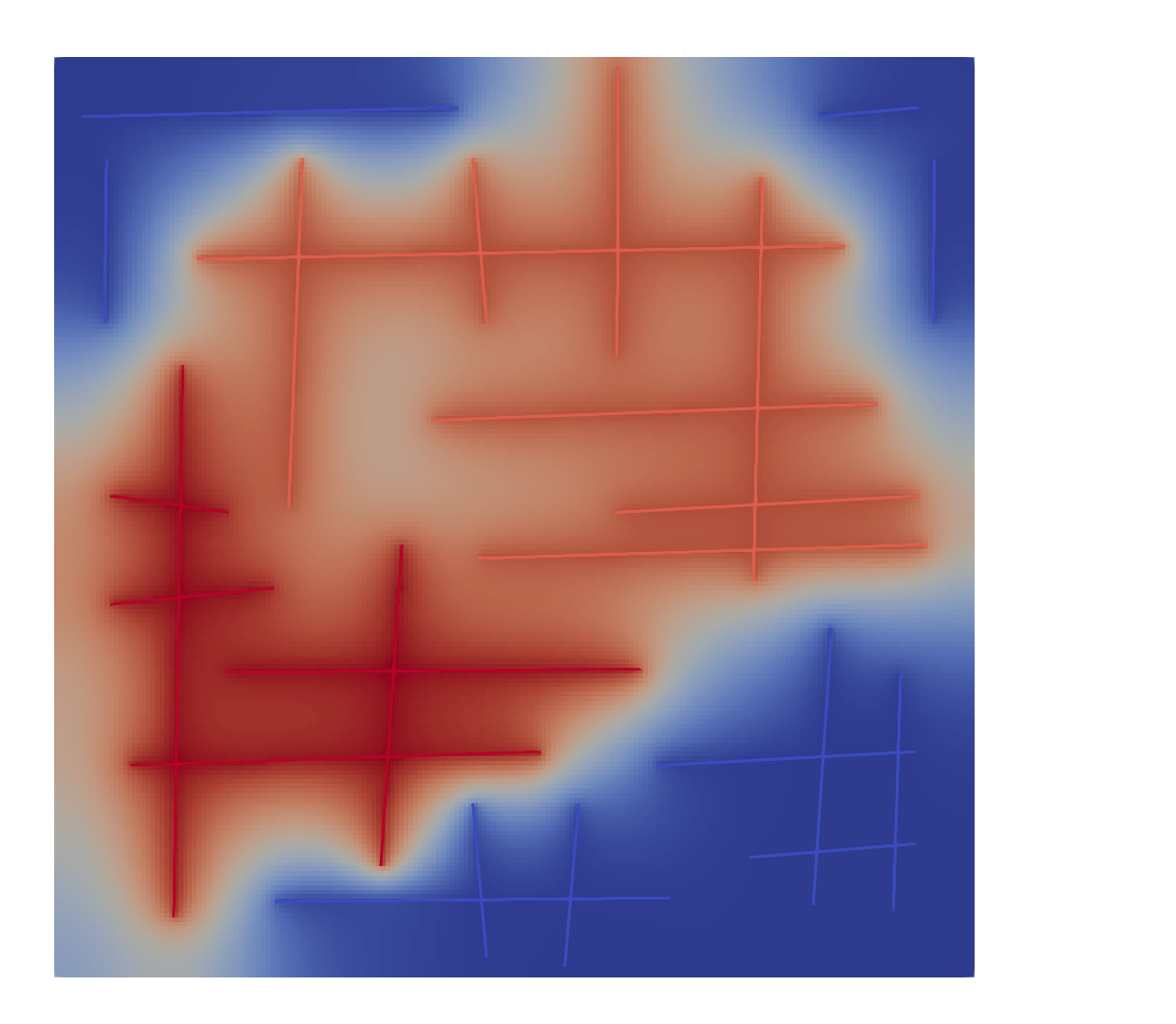}
\includegraphics[width=0.32 \textwidth]{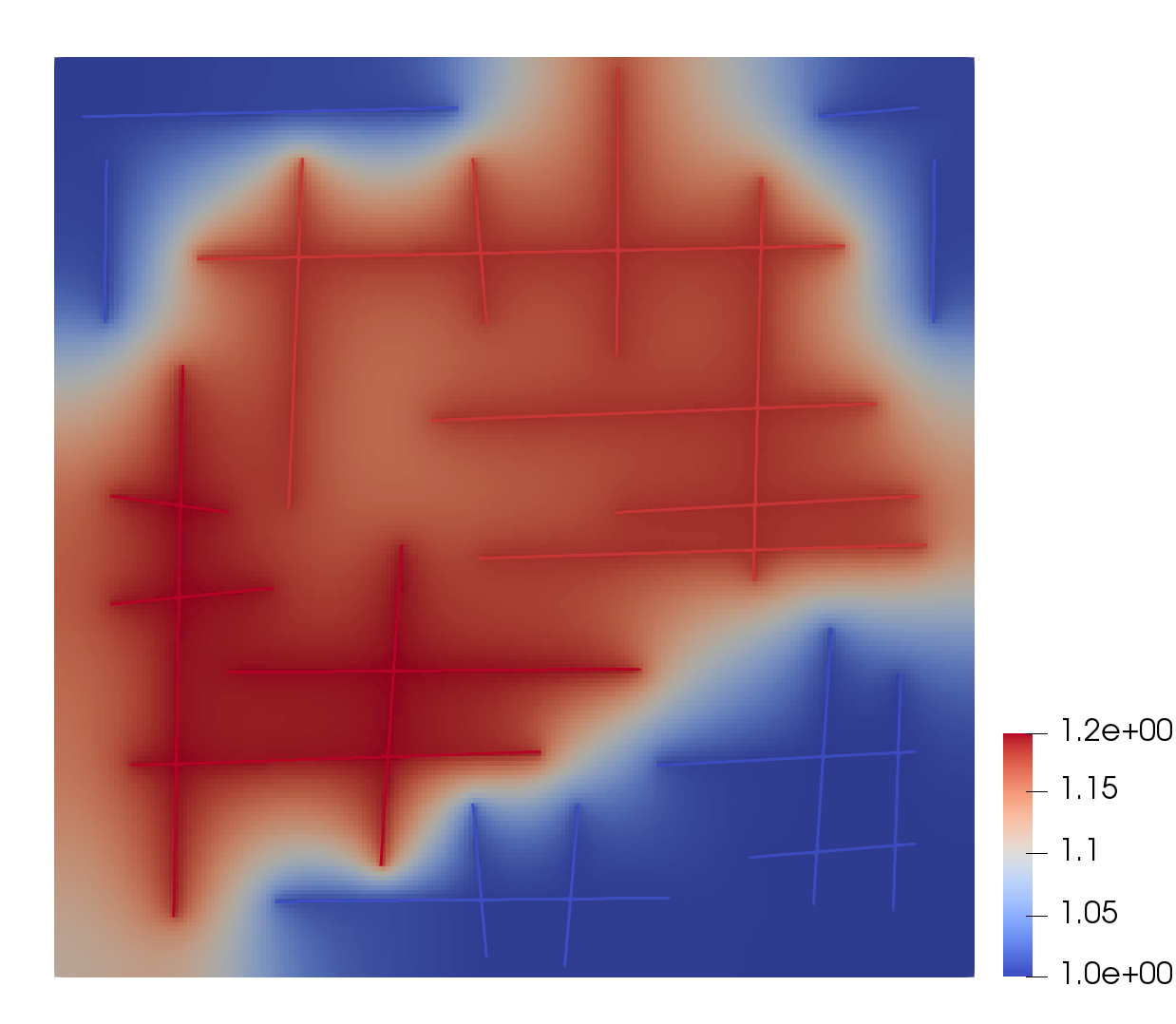}
\caption{Solution on the fine grid using the coupled scheme for three-continuum media (\textit{3C}). Solution $p^n$ for $n=10, 30$ and $50$ (from left to right). 
First row: porous matrix continuum. 
Second row: natural fracture continuum}
\label{fig:3c-u}
\end{figure}

In Figure \ref{fig:2c-u} and \ref{fig:3c-u},  we present solution for two- and three--continuum media, respectively. The solution $p^n$ is shown at three time layers $n=10, 30$ and $50$.  Numerical simulations were performed using a coupled scheme. 
In coupled scheme, we solve large coupled system of equations that have $DOF_h = 42684$ for two-continuum problem (\textit{2C}) and $DOF_h = 82684$ for three-continuum problem (\textit{3C}).  
Solution time is 46.3 sec and 105.6 for \textit{2C} and \textit{3C}, respectively.  Average number of iterations for solution of the linear system of equations at each time layer is  $\bar{N}_{it} = 1146.46$ for \textit{2C} and $\bar{N}_{it} = 1118.5$ for \textit{3C} (see Tables {\ref{tab-2c} and {\ref{tab-2c}).   Note that the average number of iterations equals the total number of iterations divided by the number of time steps.

\begin{figure}[h!]
\centering
\includegraphics[width=0.49 \textwidth]{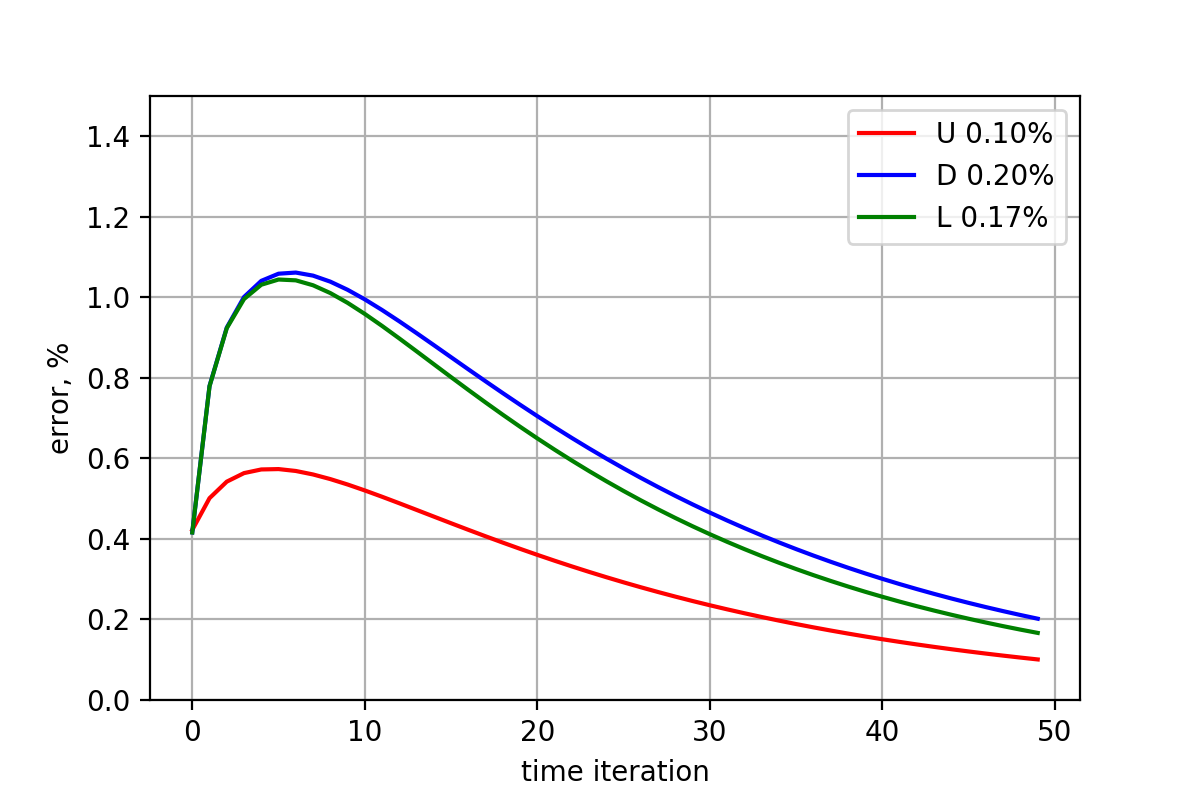}
\includegraphics[width=0.49 \textwidth]{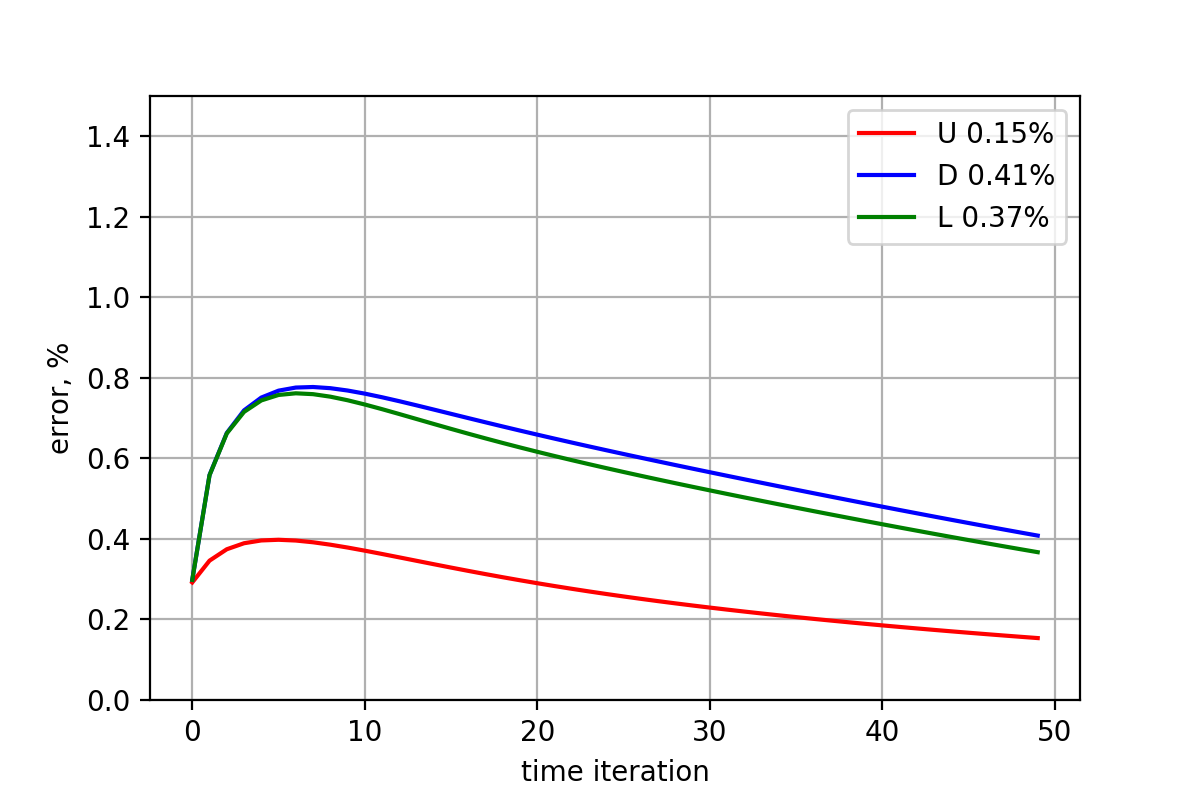}
\caption{Dynamic of the error (in percentage) for decoupled schemes.  The label is given with the error at the final time. 
Left: two-continuum media (\textit{2C}).
Right: three-continuum media (\textit{3C})}
\label{fig:dc}
\end{figure}

\begin{table}[h!]
\centering
\begin{tabular}{|c|c|c|cc|}
\hline
 \multicolumn{5}{|c|}{Two-continuum media (\textit{2C})} \\ 
\hline
& time$_{tot}$(sec) & $e_h$ (\%)
& time$_1$  ($\bar{N}_{it,1}$)  
& time$_2$  ($\bar{N}_{it,2}$)   \\
\hline
\textit{Coupled}	& 46.3 & - & \multicolumn{2}{|c|}{46.3 (1146.46)}  	\\
\textit{L-scheme} 	& 2.46 	& 0.17 \% 	& 1.35 (35.0) & 1.11 (892.2) 	\\
\textit{D-scheme} 	& 2.44 	& 0.20 \%	& 1.34 (35.0) & 1.10 (899.3)  \\
\textit{U-scheme} 	& 2.40 	& 0.10 \%	& 1.34 (35.0) & 1.06 (897.1) \\
\hline
\end{tabular}
\caption{Time of the solution and the average number of iterations for two-continuum media (\textit{2C}).  Coupled and decoupled schemes}
\label{tab-2c}
\end{table}

\begin{table}[h!]
\centering
\begin{tabular}{|c|c|c|ccc|}
\hline
 \multicolumn{6}{|c|}{Three-continuum media (\textit{3C})} \\ 
\hline
& time$_{tot}$(sec) & $e_h$ (\%)
& time$_1$  ($\bar{N}_{it,1}$)  
& time$_2$  ($\bar{N}_{it,2}$)  
& time$_3$  ($\bar{N}_{it,3}$)   \\
\hline
\textit{Coupled}	& 105.59 & - & \multicolumn{3}{|c|}{105.59 (1118.5)}  \\
\textit{L-scheme} 	& 2.52 & 0.37 \%	
& 0.15 (3.0) & 1.32 (35.0) & 1.05 (895.6) \\
\textit{D-scheme} 	& 2.58 	& 0.41 \% 
& 0.16 (3.0) & 1.36 (35.0) & 1.06 (890.3) \\
\textit{U-scheme} 	& 2.67 	& 0.15 \%
& 0.15 (3.0) & 1.39 (35.0) & 1.13 (897.1) \\
\hline
\end{tabular}
\caption{Time of the solution and the average number of iterations for three-continuum media (\textit{3C}).   Coupled and decoupled schemes}
\label{tab-3c}
\end{table}

In Figure \ref{fig:dc}, we present dynamic of the relative error for  three decoupling schemes: \textit{L}, \textit{D} and \textit{U}-schemes.  
Note that we sort continua in ascending order based on their permeability.  Therefore in  \textit{L-scheme}, we first solve a problem with lower permeability  (porous matrix).  For the  \textit{U-scheme}, we first solve a problem for the continuum related to the higher permeability or lower-dimensional fractures.   
In decoupled schemes,  we solve an equation for each continuum separately. 
For two-continuum problem,  the fist continuum is the porous matrix defined in domain $\Omega$ with $DOF_{h,1} = 40000$ and the second continuum is the lower-dimensional fractures with $DOF_{h,2} = 2684$ 
For the three-continuum problem, the fist continuum is the porous matrix  in $\Omega$ with $DOF_{h,1} = 40000$,  the second continuum  is the natural fractures in $\Omega$ with $DOF_{h,2} = 40000$, and the third continuum is the embedded fractures with $DOF_{h,3} = 2684$. 
From Figure \ref{fig:dc}, we observe that all schemes provide good results with small errors (less than 1\%).  However,  the  \textit{U-scheme}  give a better results.

Solution time with number of iterations are presented in Tables {\ref{tab-2c} and \ref{tab-3c}.   
We present the total time of solution (time$_{tot}$) for coupled and decoupled schemes with relative error in percentage at the final time.  
For decoupled scheme, we also present solution time related to each continuum equation with average number of iterations (time$_{\alpha}$  and $\bar{N}_{it,\alpha}$, $\alpha=1,2,3$). 
We note that the main part of the system is filled by continuum that defined in the  domain $\Omega$(for example,  $DOF_{h,1} = 40000$ in $\Omega$ and $DOF_{h,2} = 2684$ for lower-dimensional fracture domain $\gamma$ in \textit{2C} model).  
By system decoupling, we obtain a separate equation for each continuum. Therefore the number of iterations for the equation defined in $\Omega$ (less permeable domain than lower-dimensional fracture network) becomes smaller and reduces the calculation time. 
We have 2.5 sec of the solution time for all decoupled schemes, which is 19 times faster than the solution using the coupled scheme for \textit{2C} model and 39 times faster for \textit{3C} model. Moreover, the difference (error) between solutions is very small.

\subsection{Decoupling schemes for the coarse grid nonlocal multicontinuum approximation}

Next, we consider the solution of the problem on the coarse grid. We use a nonlocal multicontinuum (NLMC) method to construct a very accurate approximation on the coarse grid.  
We take a fine-grid solution with the coupled scheme as a reference solution. 
To compare coupled and decoupled methods for multiscale coarse grid approximation, we calculate relative error  in percentage on the coarse grid
\[
e_H^n = \frac{||\bar{p}^n - \tilde{p}^n||_{L_2}}{||\bar{p}^n||_{L_2}} \times 100 \%,
\]
where $n$ is the time layer, $\bar{p}$ is the reference solution (average on a coarse grid), and $\tilde{p}$ is the multiscale solution using the NLMC method for coupled and decoupled schemes.

\begin{figure}[h!]
\centering
$20 \times 20$ coarse grid\\
\includegraphics[width=0.32 \textwidth]{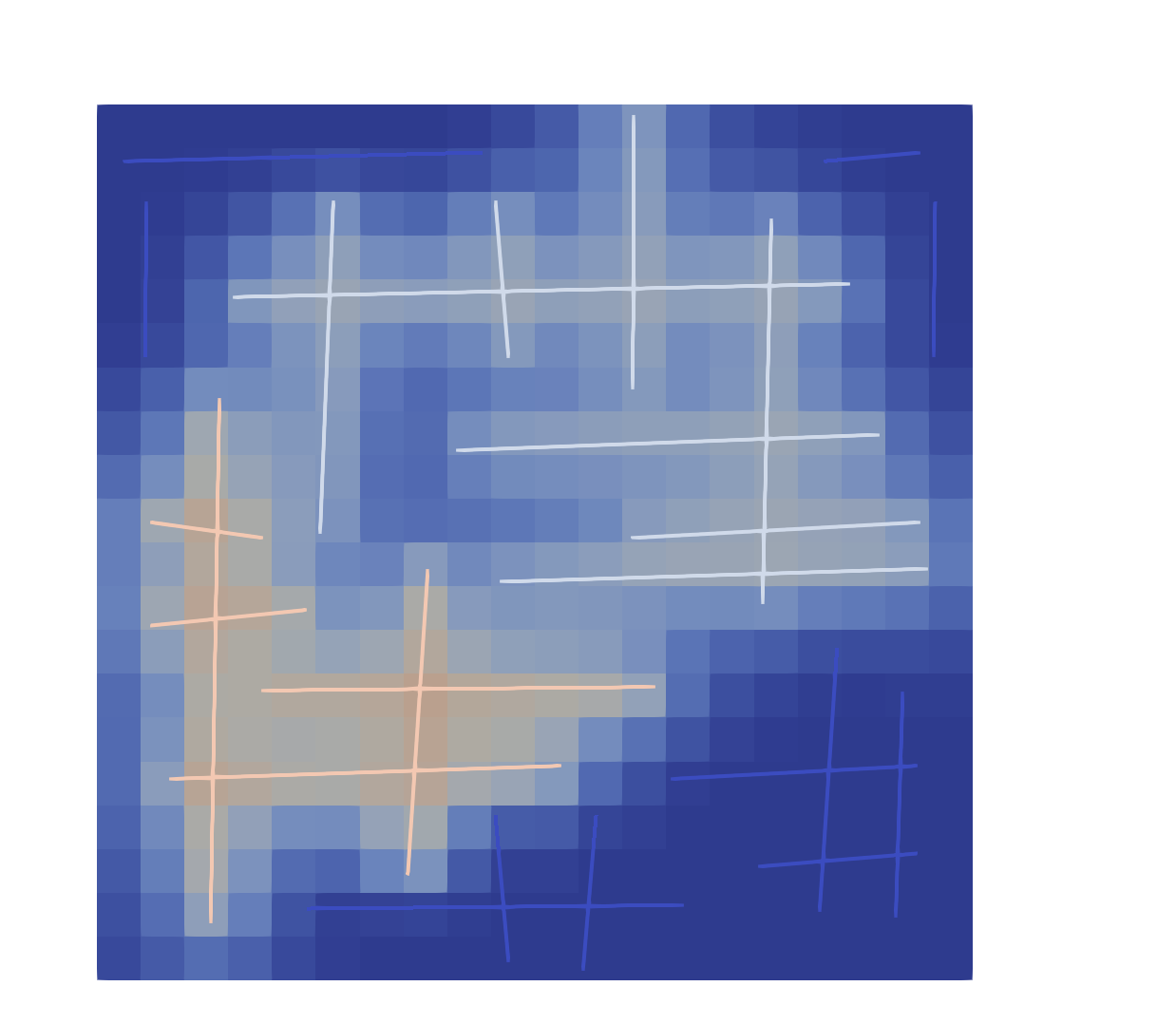}
\includegraphics[width=0.32 \textwidth]{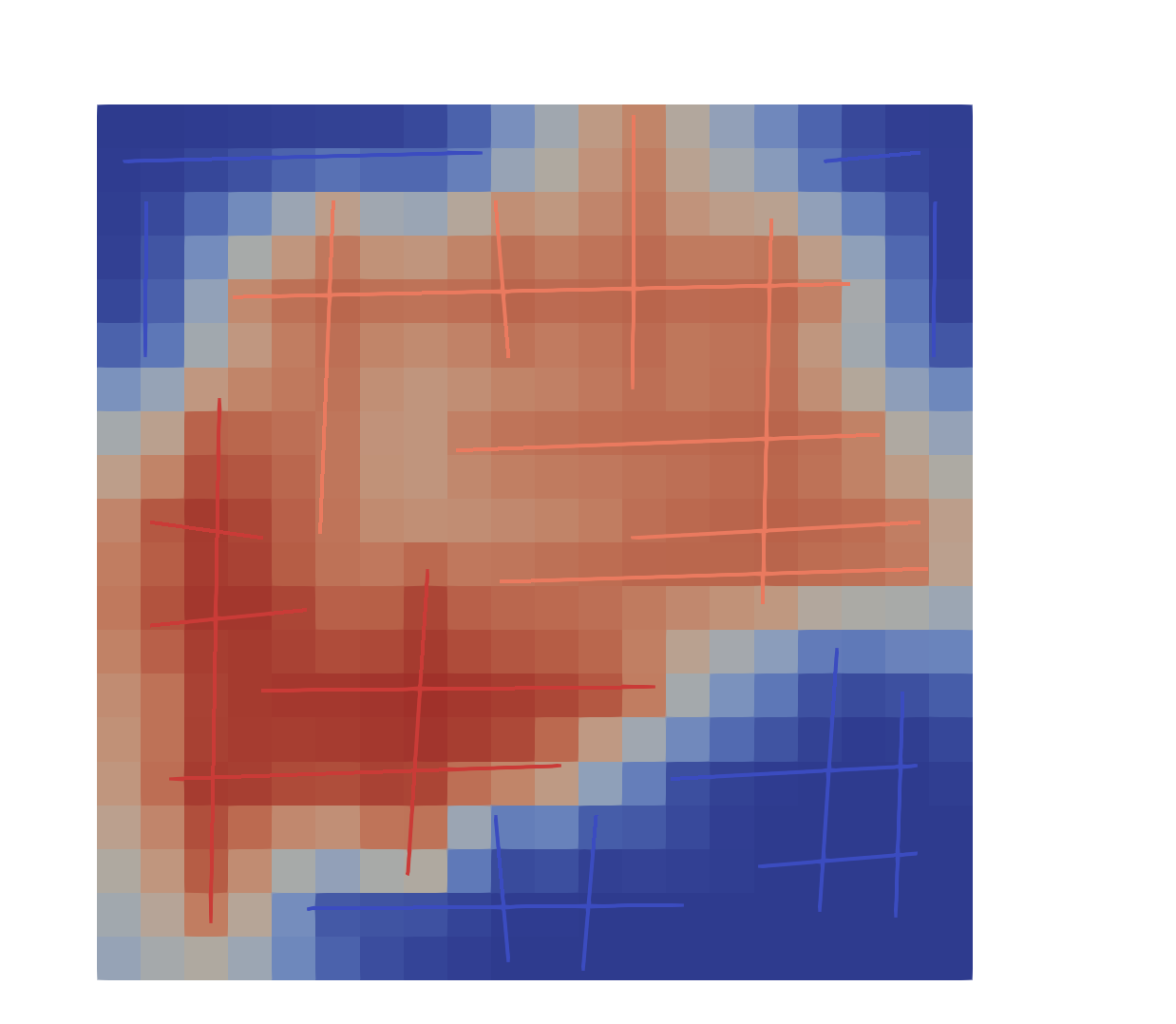}
\includegraphics[width=0.32 \textwidth]{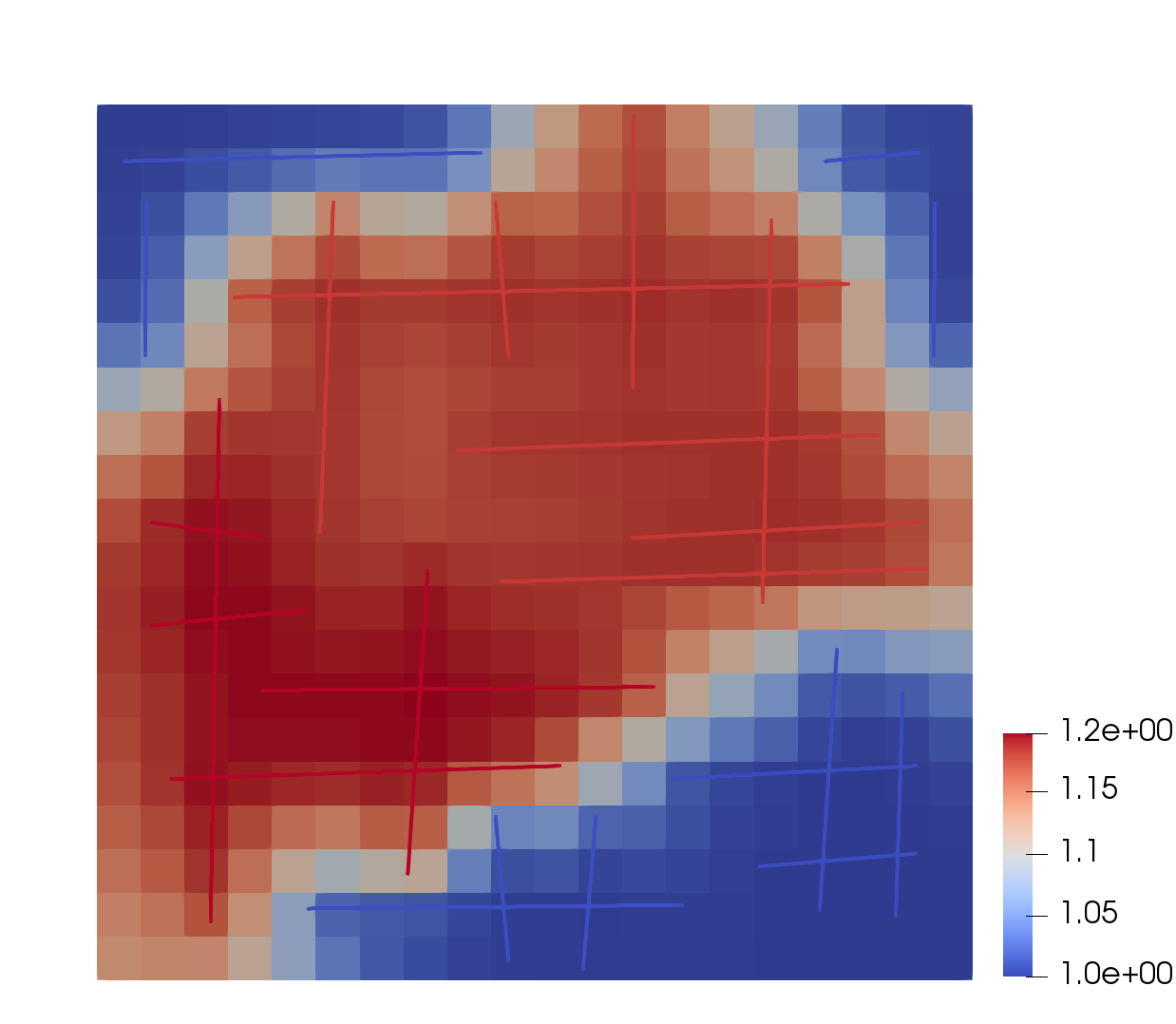}\\
$40 \times 40$ coarse grid\\
\includegraphics[width=0.32 \textwidth]{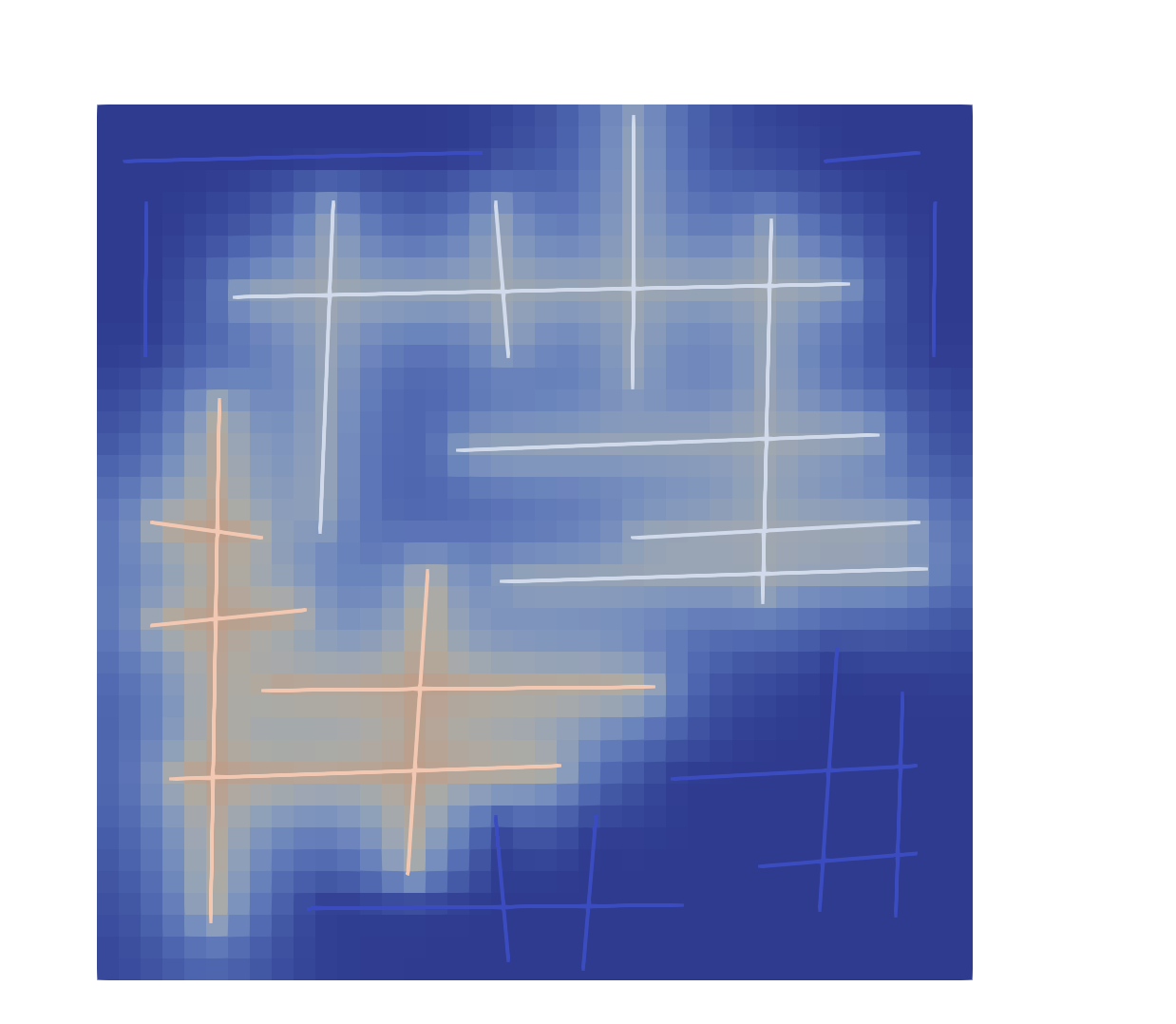}
\includegraphics[width=0.32 \textwidth]{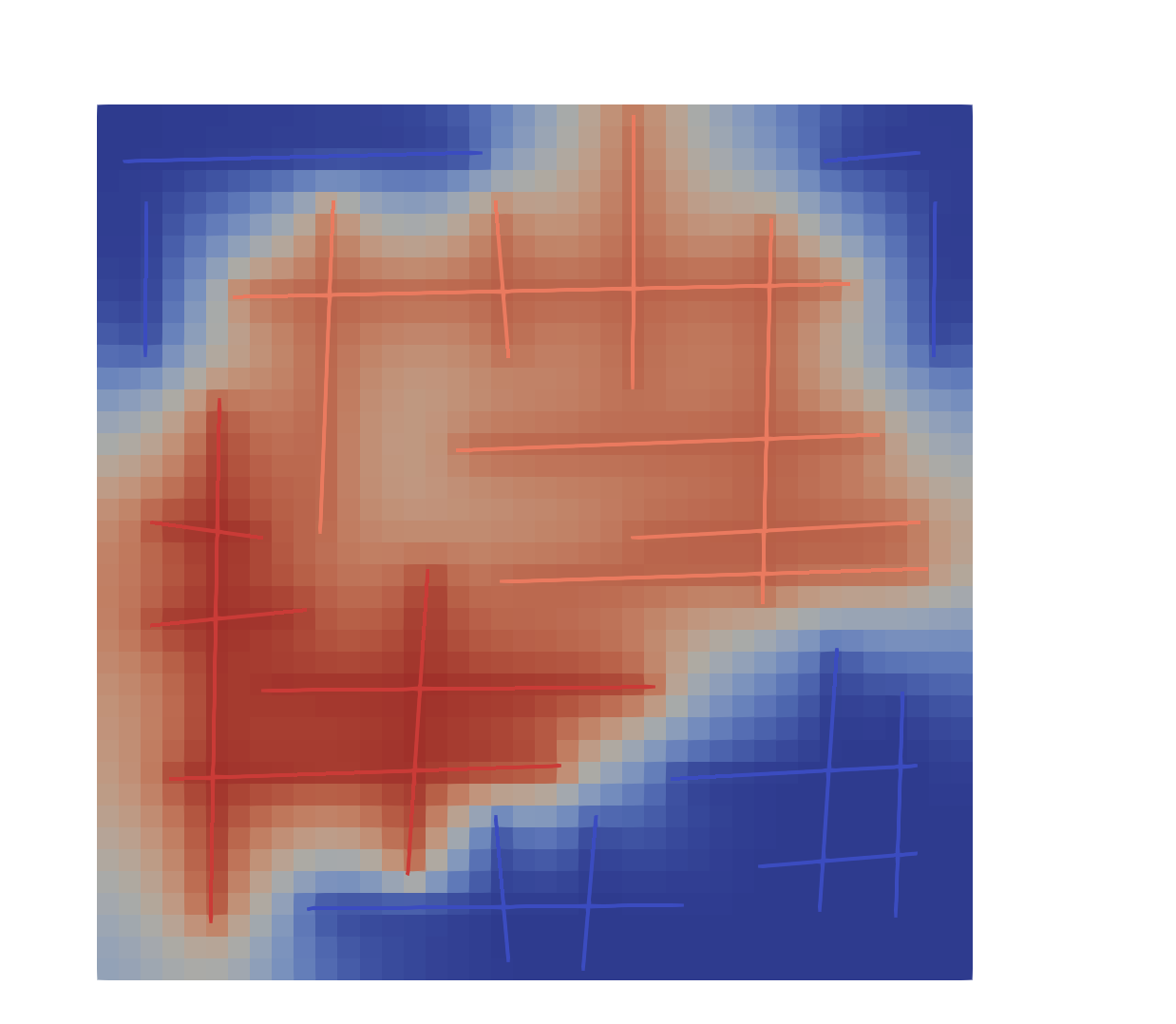}
\includegraphics[width=0.32 \textwidth]{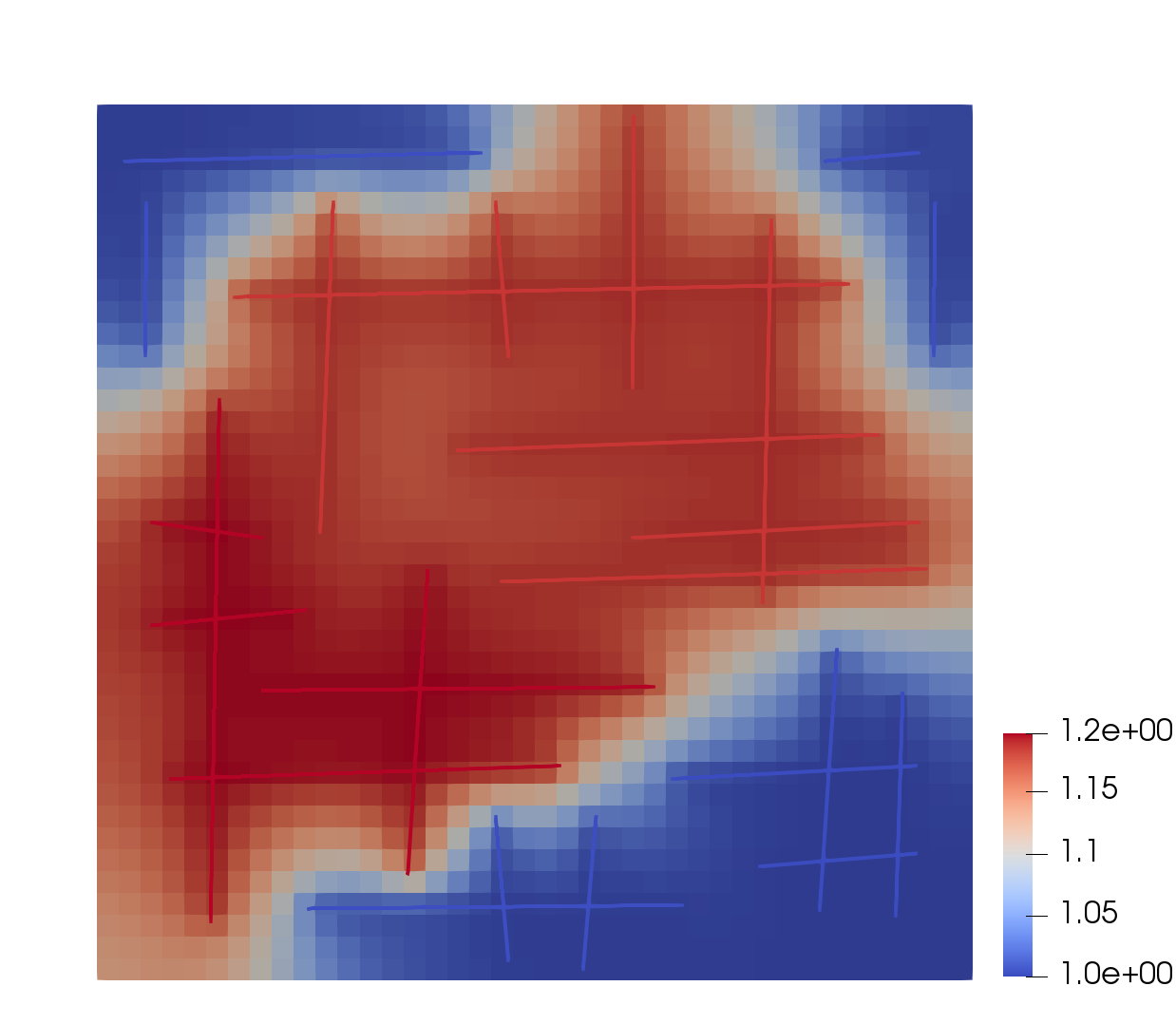}
\caption{Solution on the fine grid using the coupled scheme for two-continuum media (\textit{2C}). Solution $p^n$ for $n=10, 30$ and $50$ (from left to right)}
\label{fig:2c-uc}
\end{figure}

\begin{figure}[h!]
\centering
$20 \times 20$ coarse grid\\
\includegraphics[width=0.32 \textwidth]{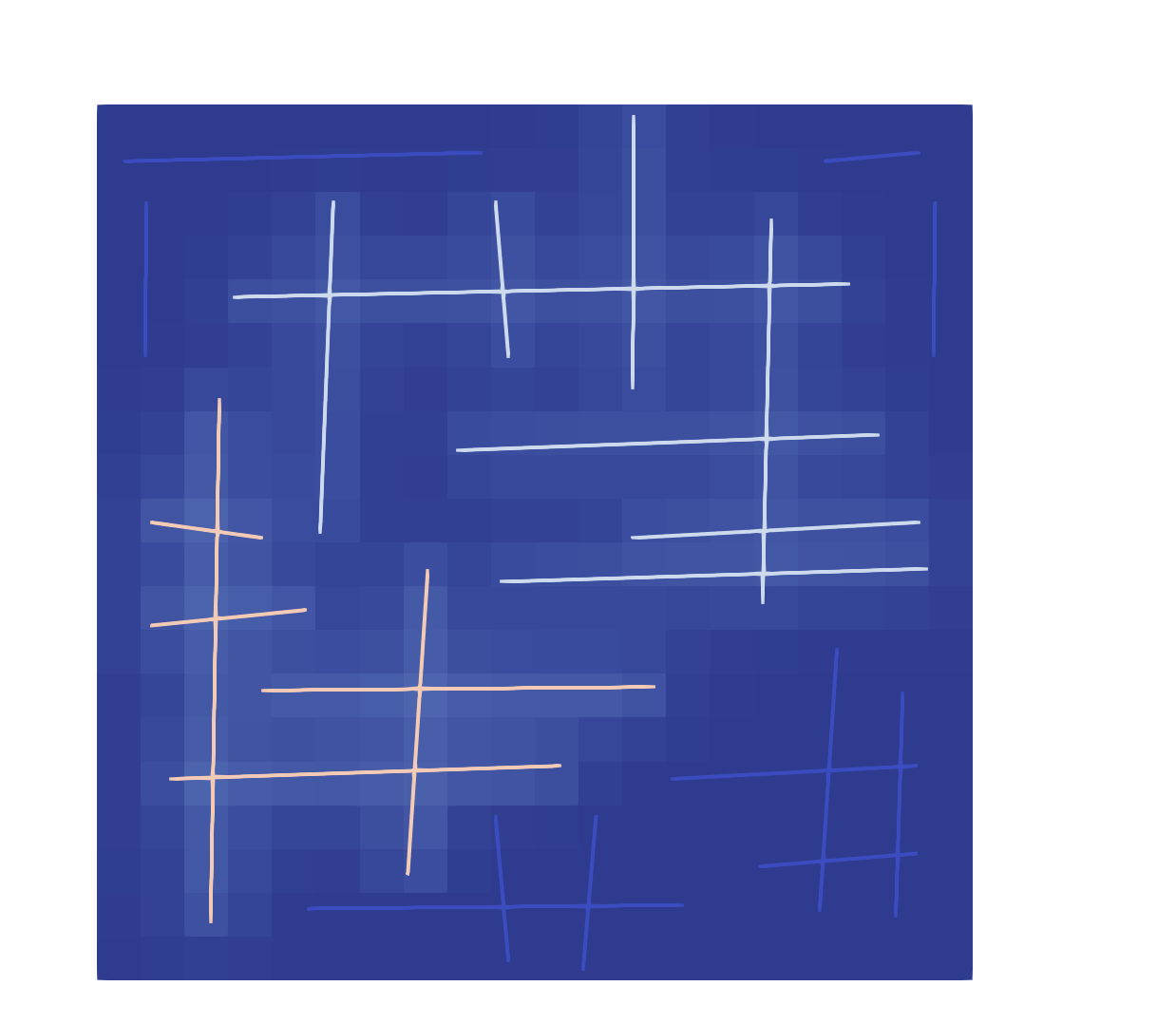}
\includegraphics[width=0.32 \textwidth]{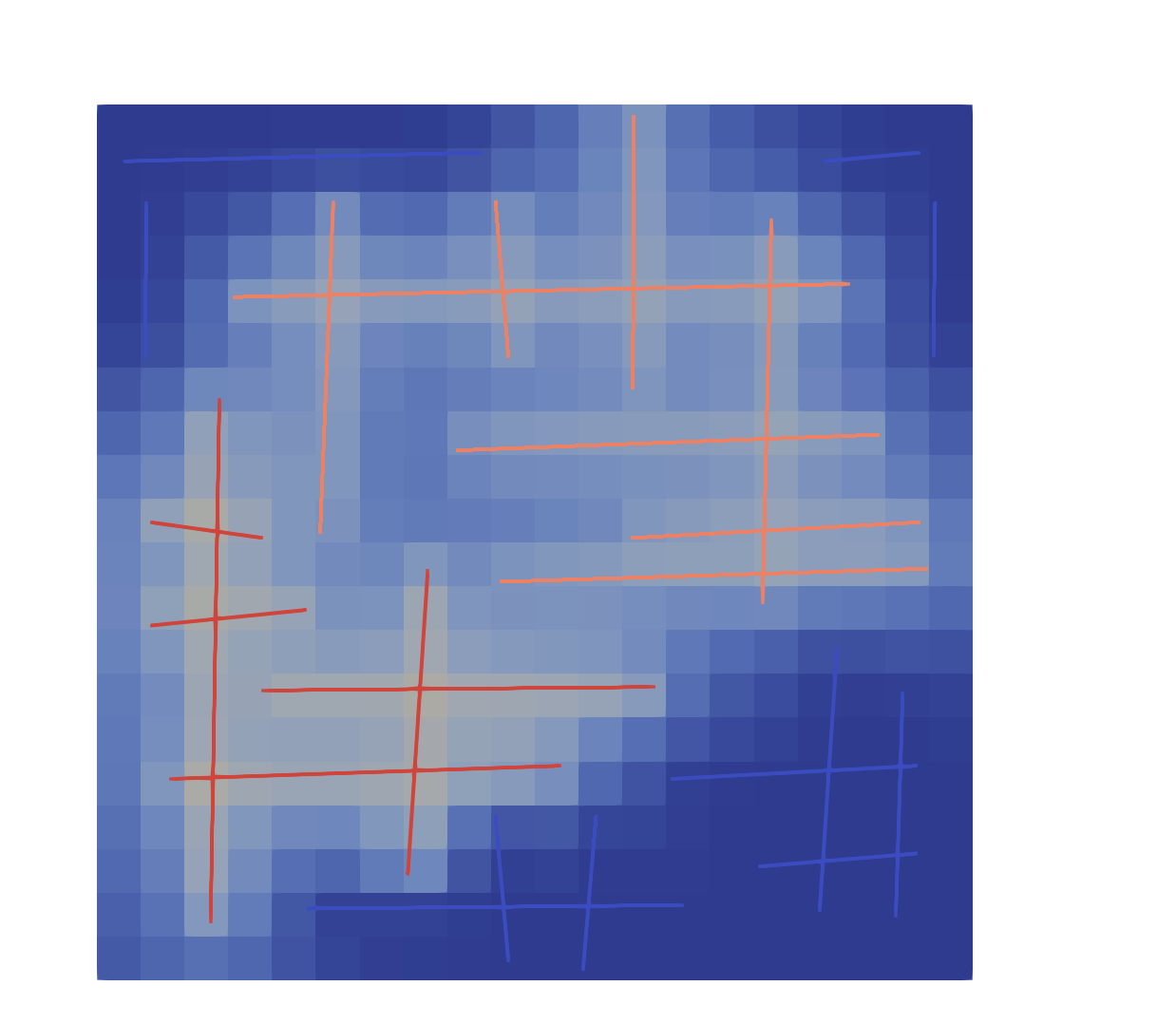}
\includegraphics[width=0.32 \textwidth]{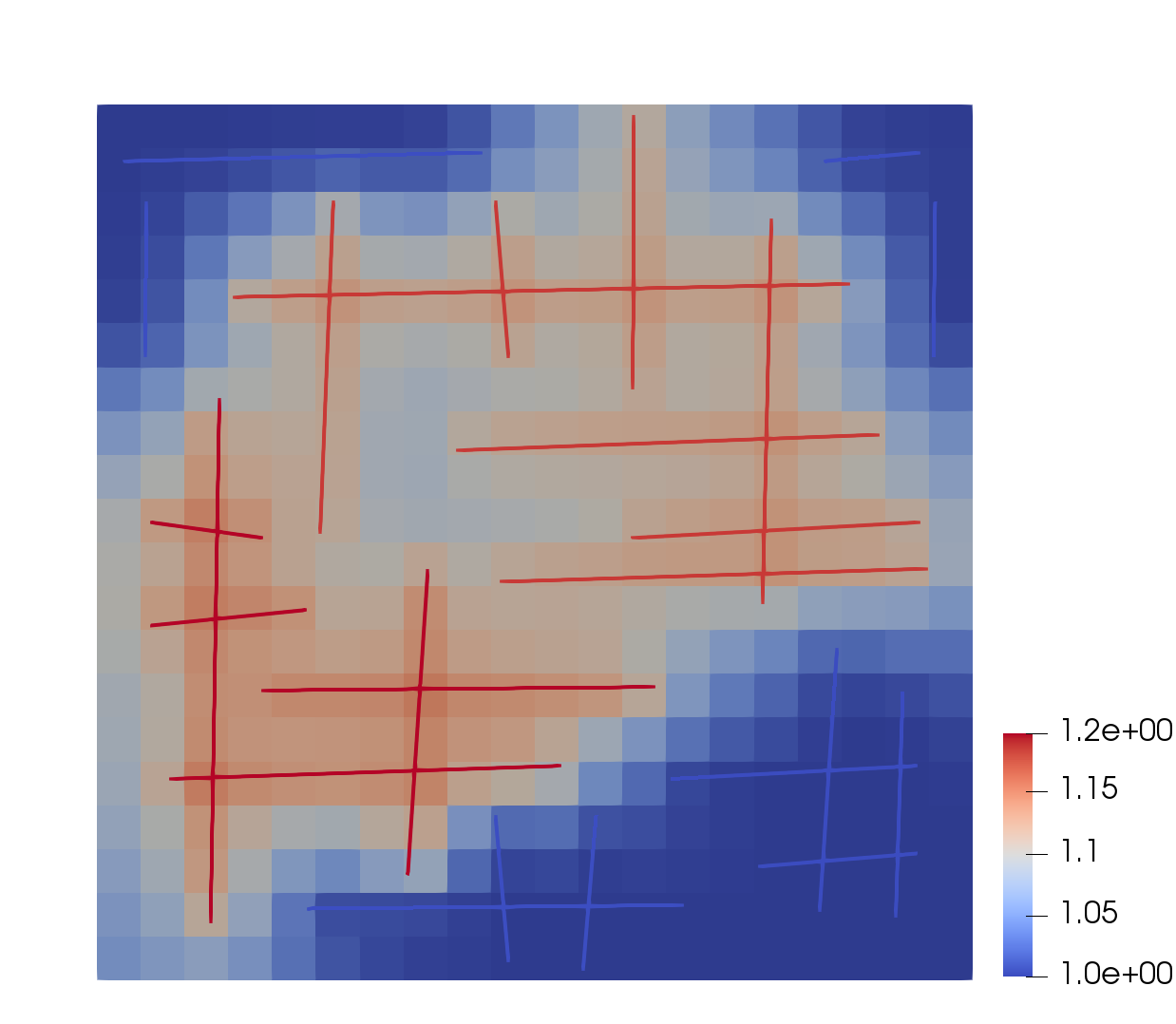}\\
\includegraphics[width=0.32 \textwidth]{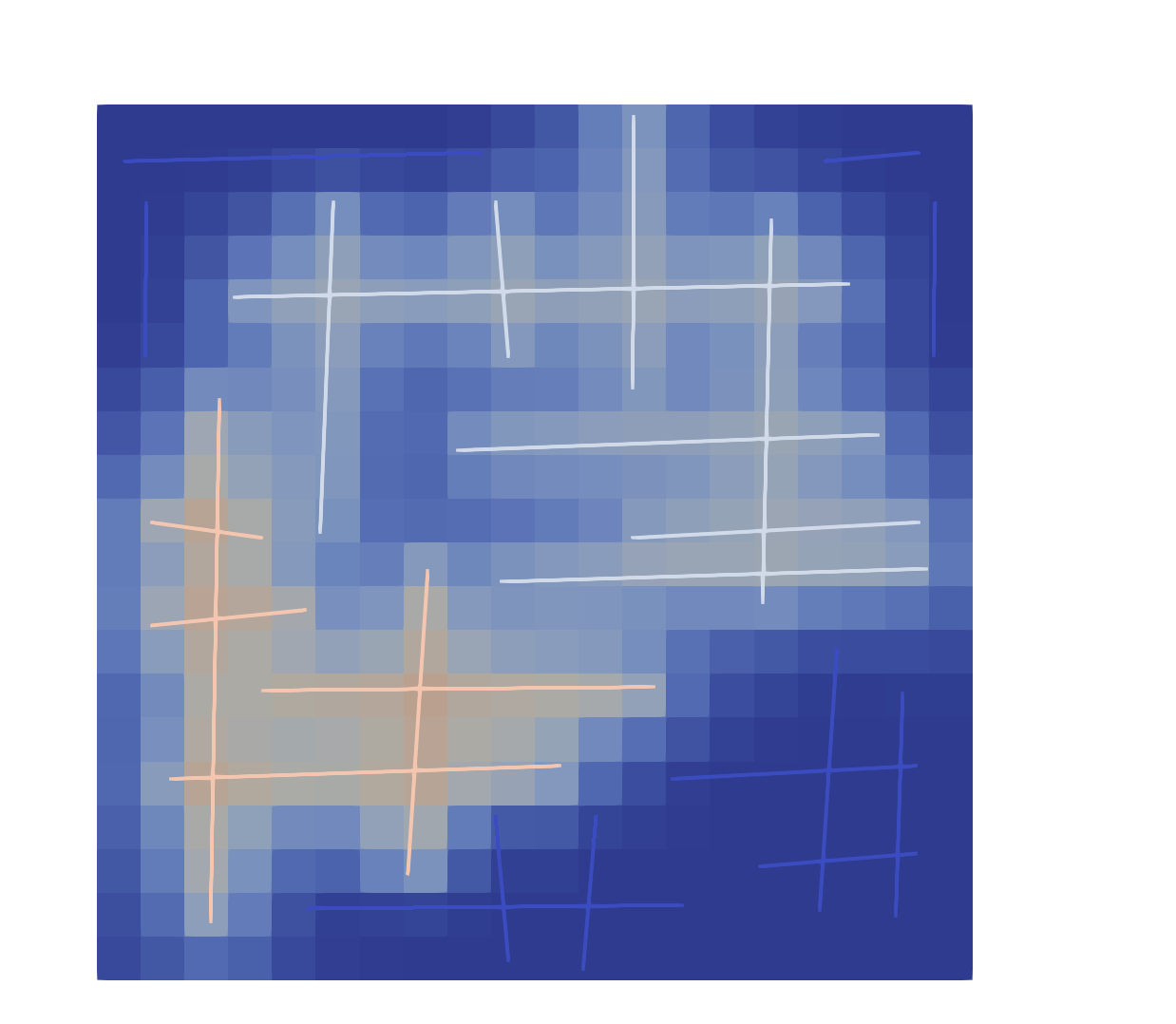}
\includegraphics[width=0.32 \textwidth]{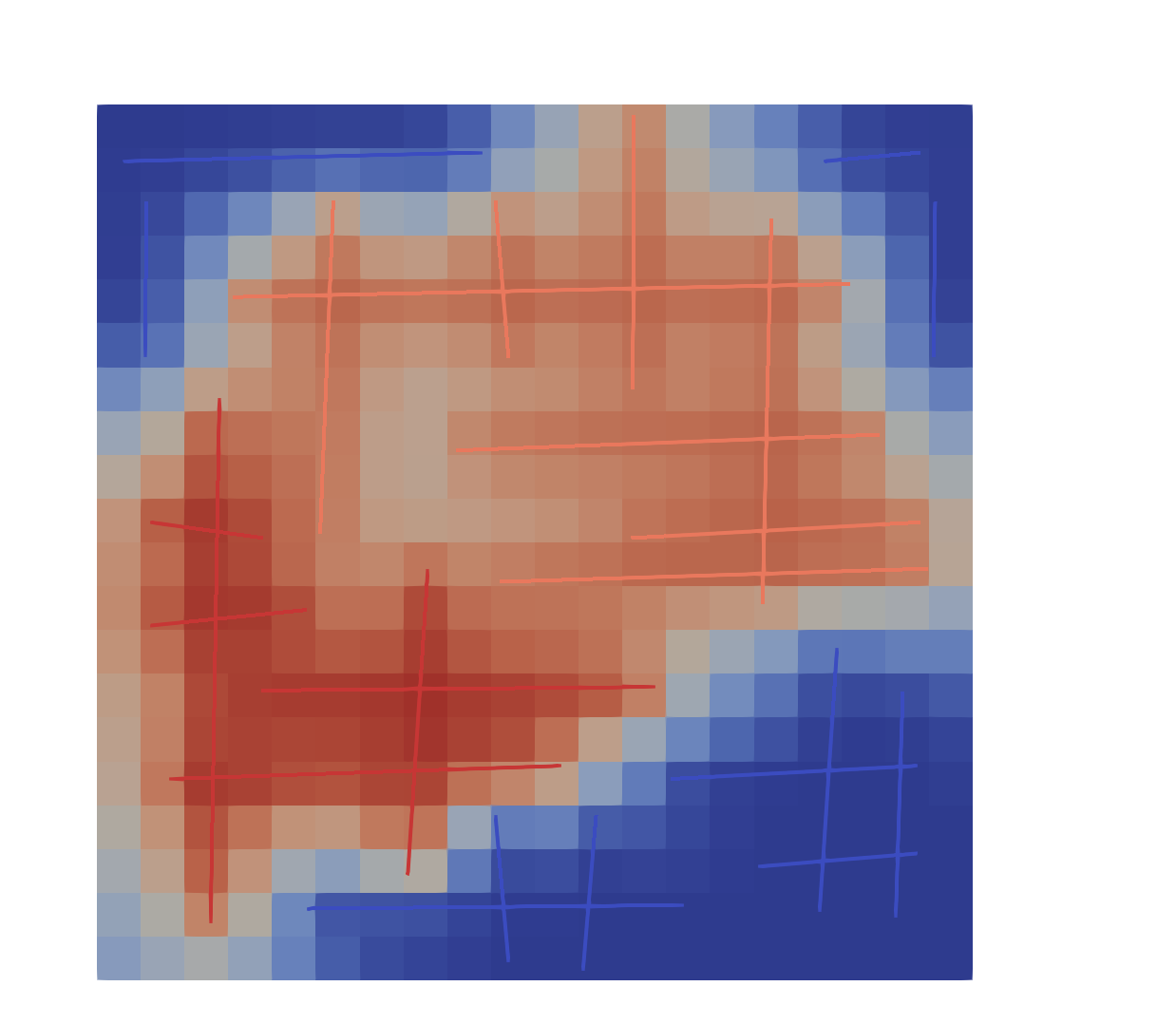}
\includegraphics[width=0.32 \textwidth]{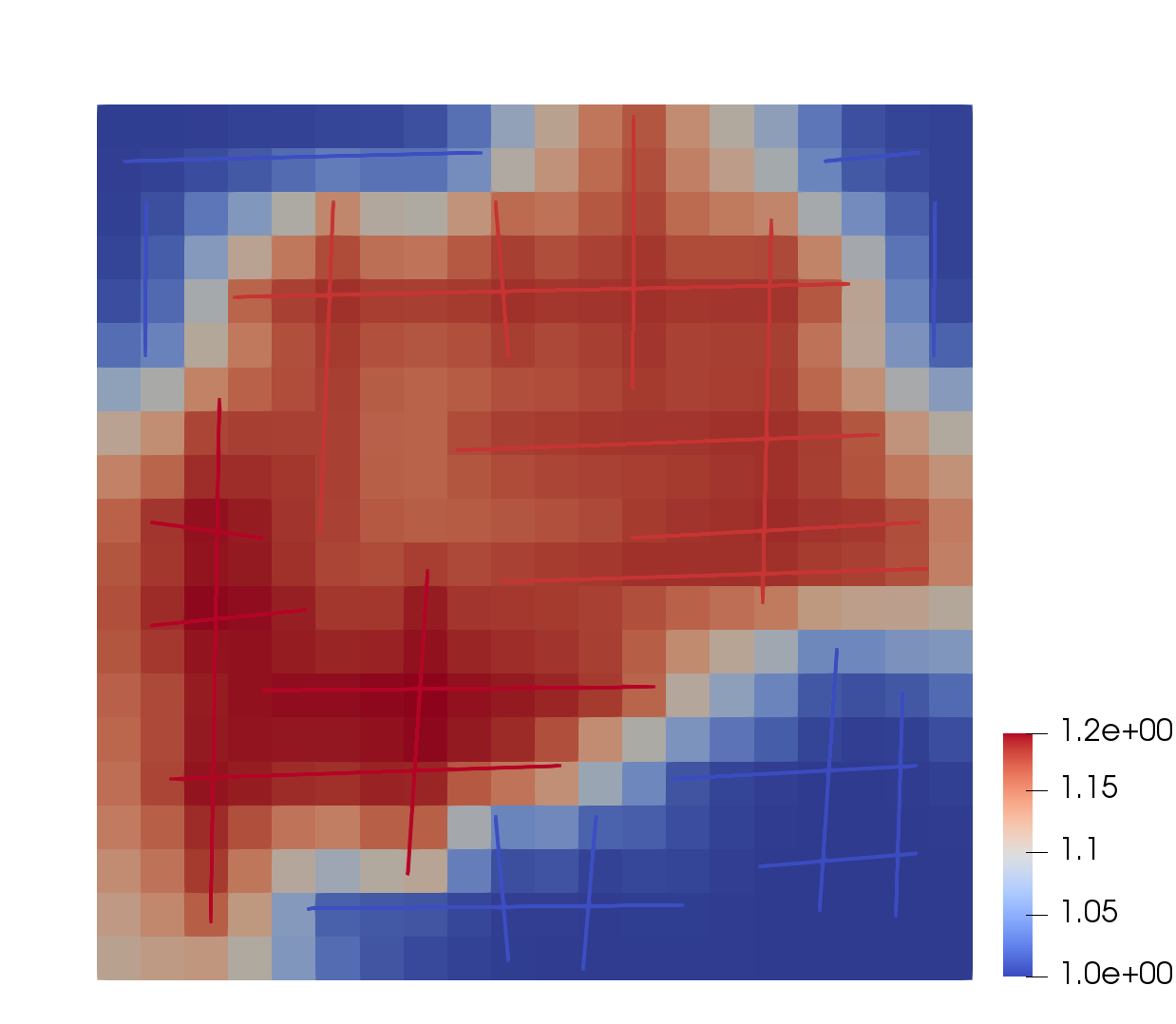}
$40 \times 40$ coarse grid\\
\includegraphics[width=0.32 \textwidth]{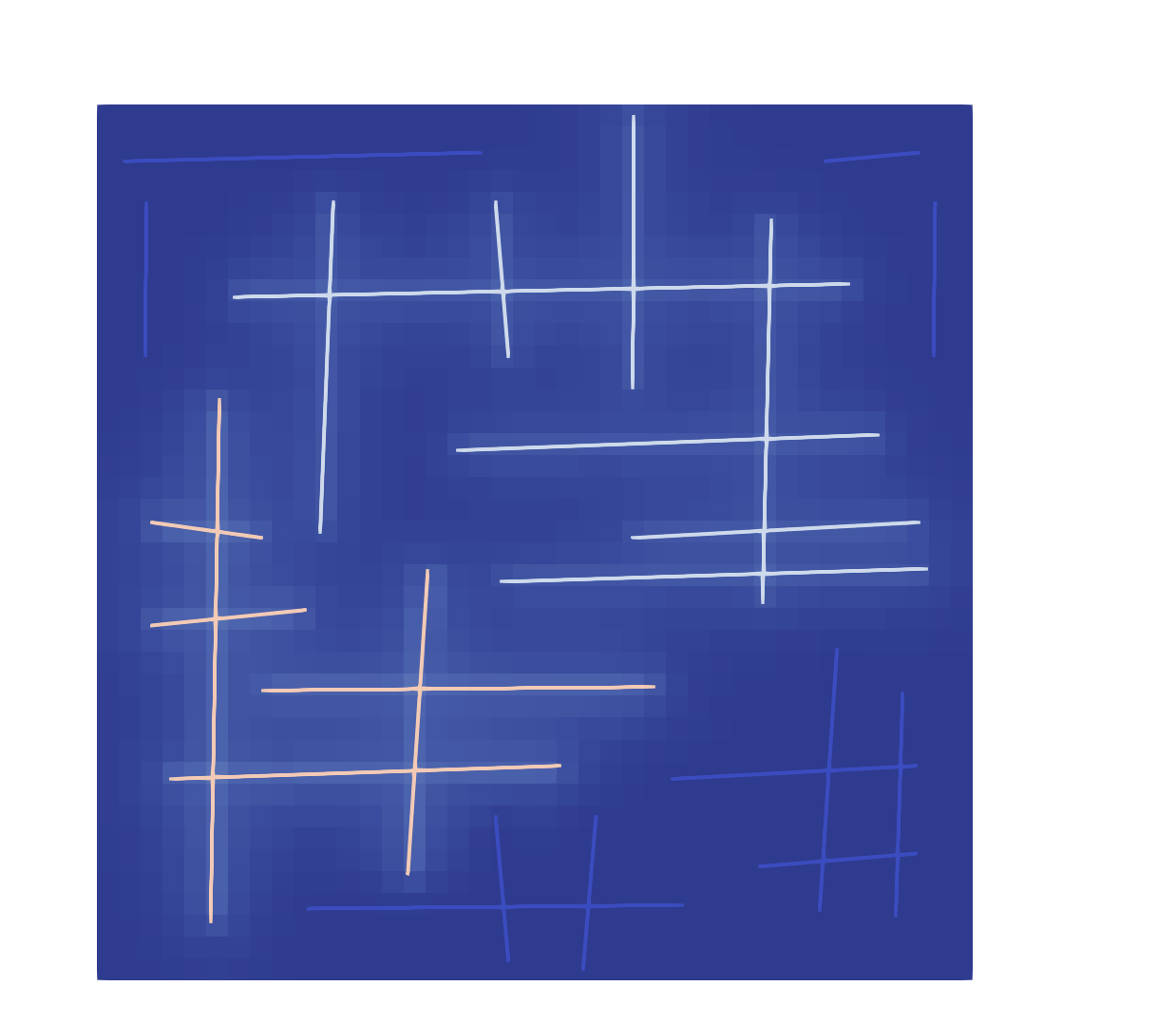}
\includegraphics[width=0.32 \textwidth]{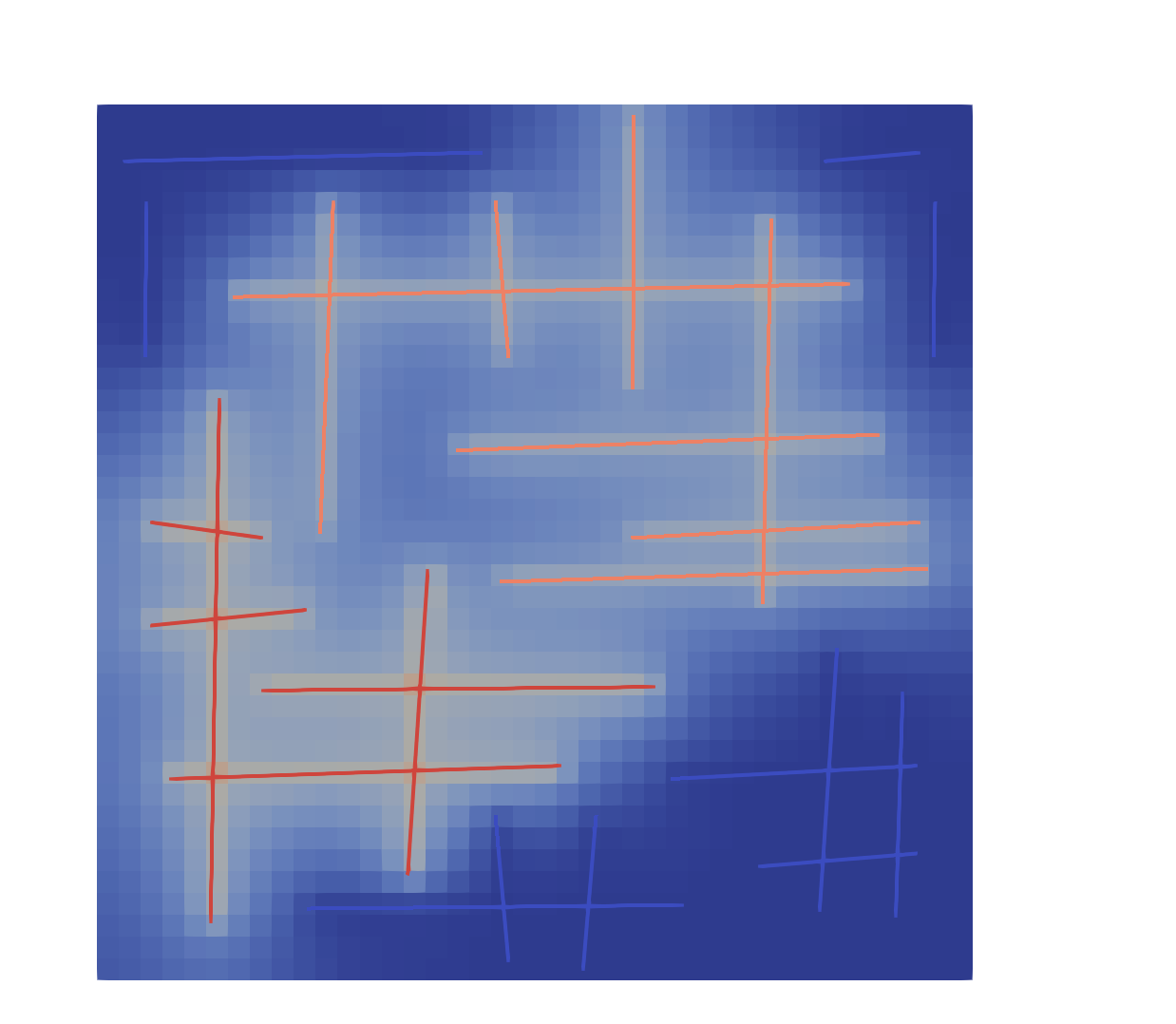}
\includegraphics[width=0.32 \textwidth]{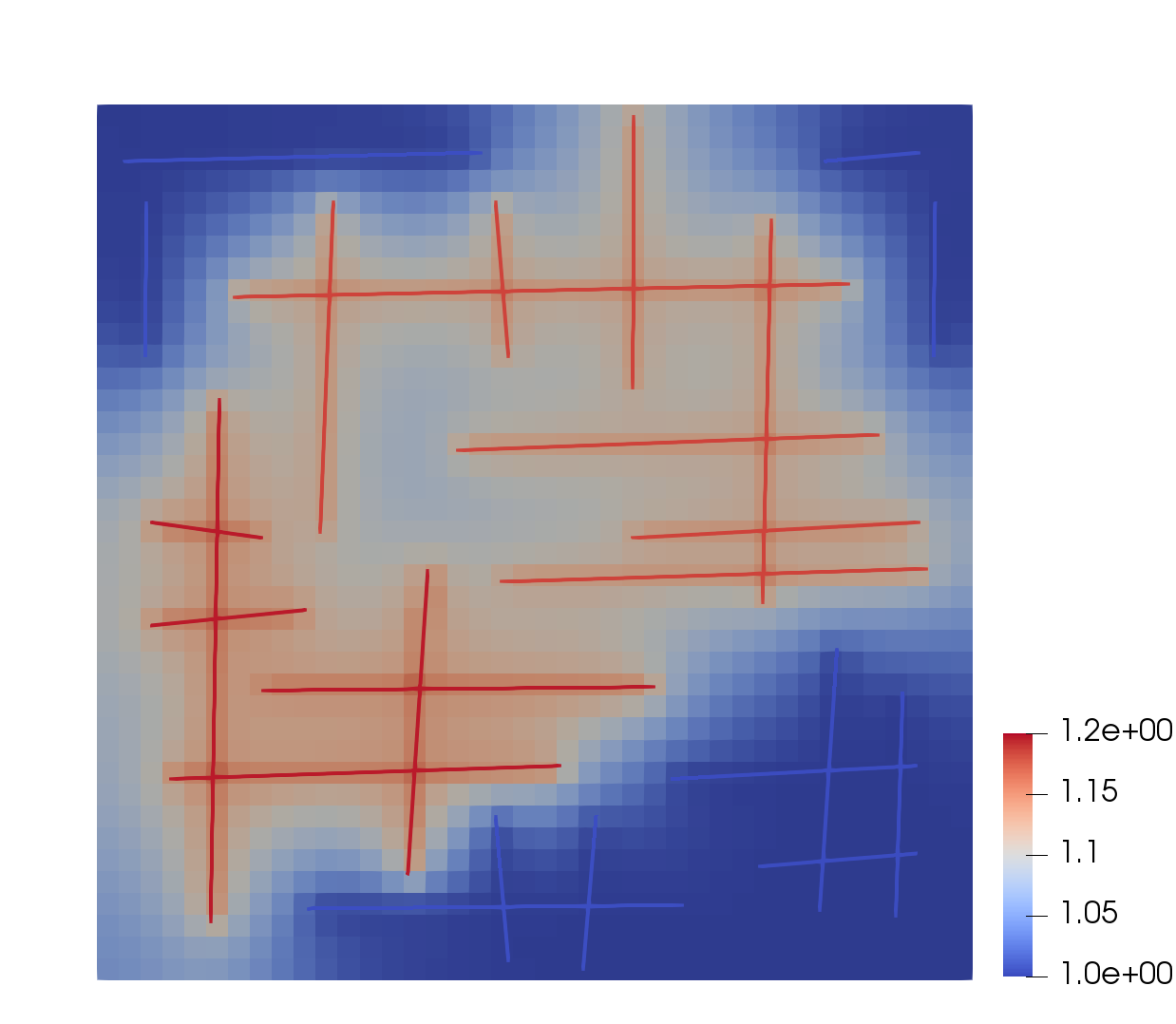}\\
\includegraphics[width=0.32 \textwidth]{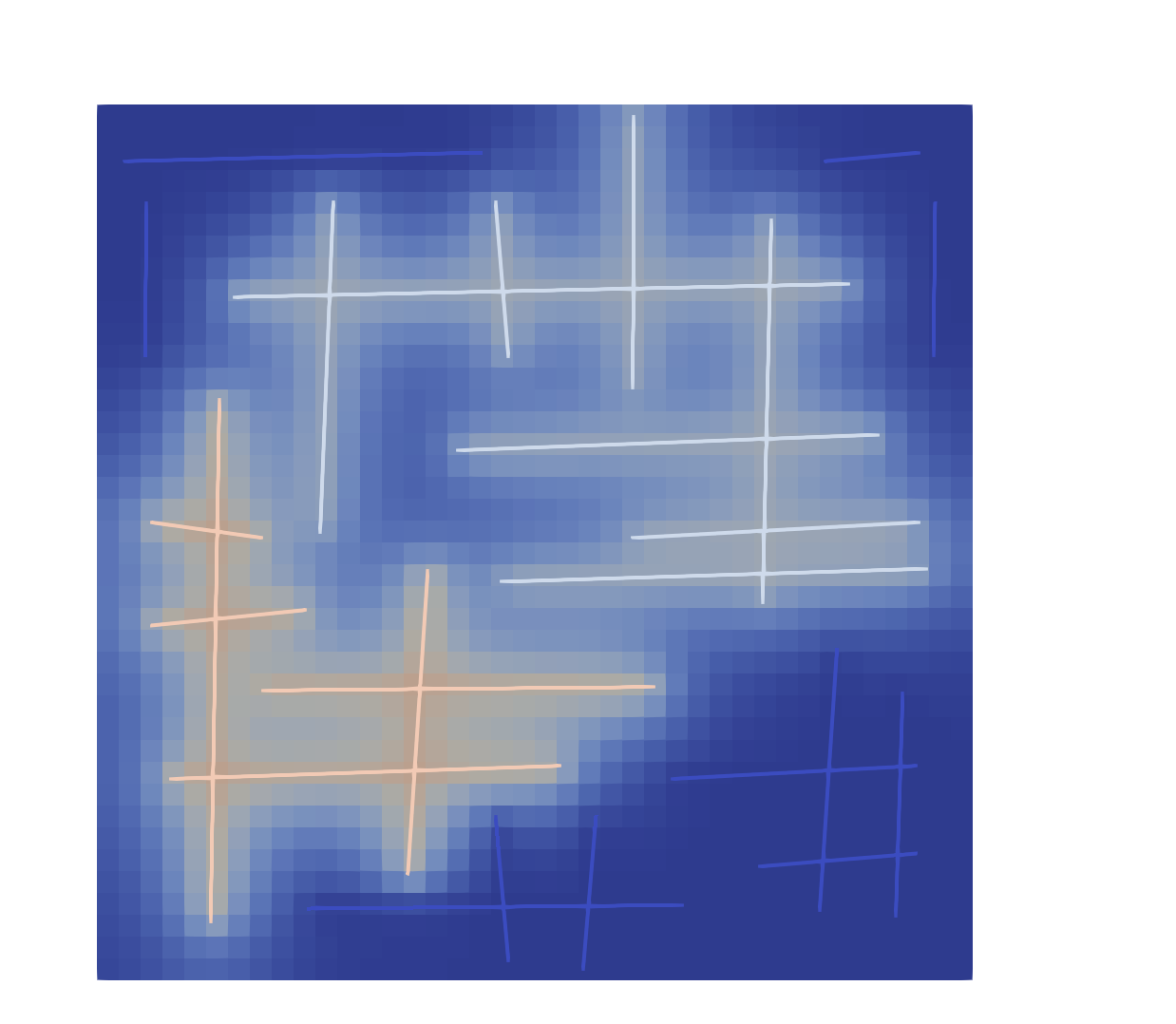}
\includegraphics[width=0.32 \textwidth]{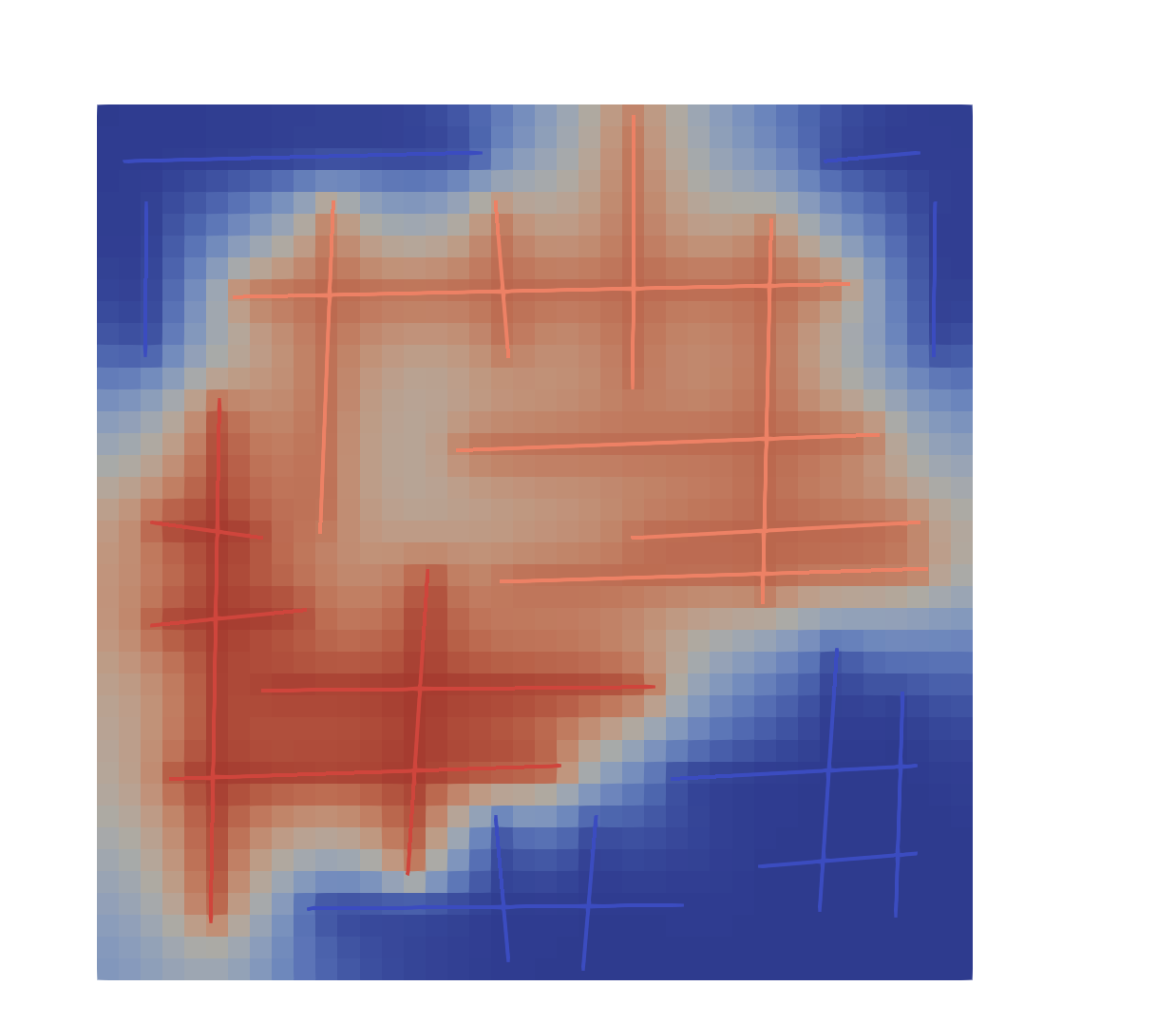}
\includegraphics[width=0.32 \textwidth]{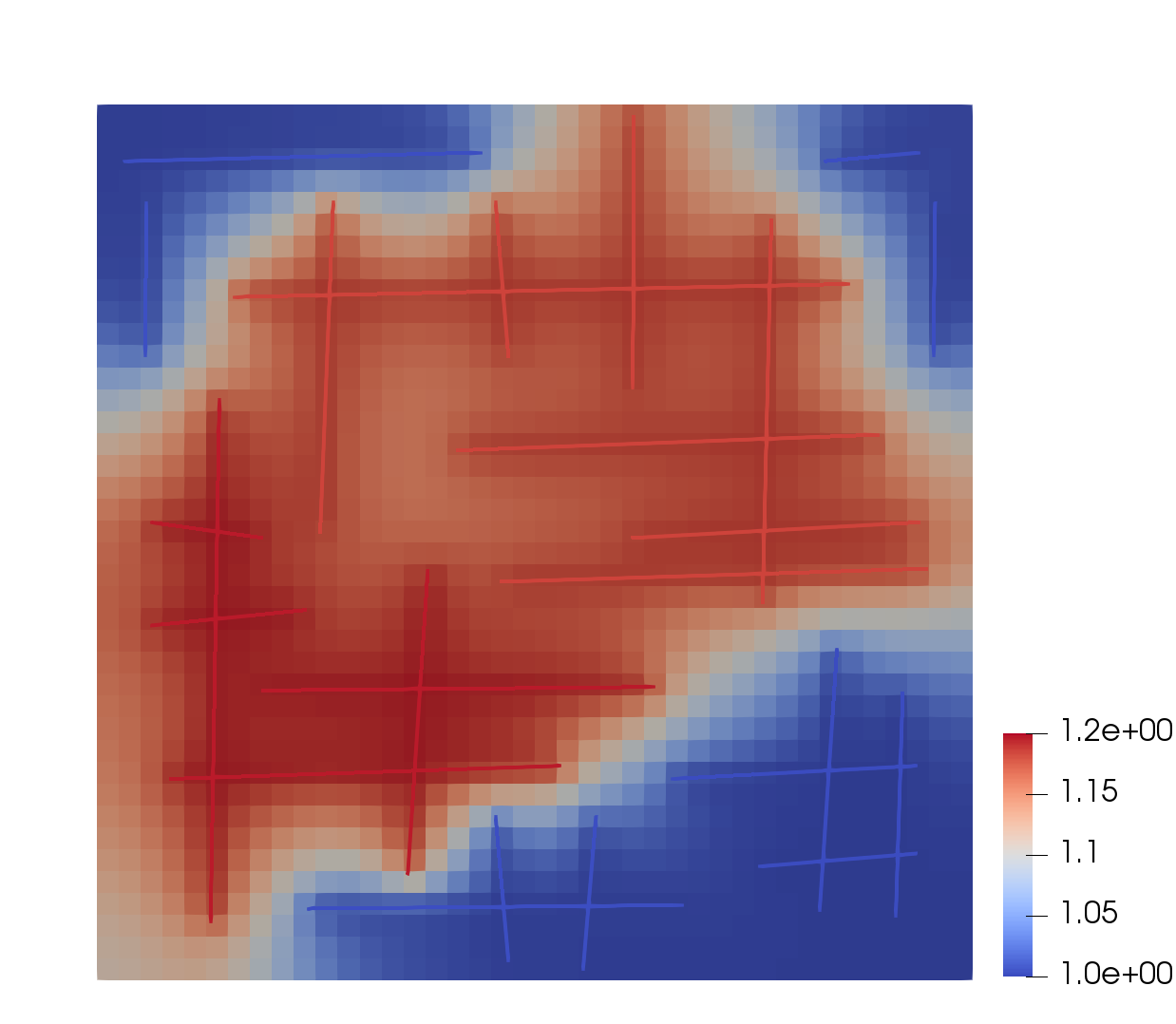}
\caption{Solution on the fine grid using the coupled scheme for three-continuum media (\textit{3C}). Solution $p^n$ for $n=10, 30$ and $50$ (from left to right). 
First row: porous matrix continuum. 
Second row: natural fracture continuum}
\label{fig:3c-uc}
\end{figure}

In Figure \ref{fig:2c-uc} and \ref{fig:3c-uc},  we present solution for two- and three--continuum porous media on $20 \times 20$ and $40 \times 40$ coarse grids.  We depict solution at three time layers $n=10, 30$ and $50$. 
In coupled scheme, we solve system of equations that have $DOF_H = 562$ for two-continuum problem (\textit{2C}) and $DOF_h = 962$ for three-continuum problem (\textit{3C}) on $20 \times 20$ coarse grid.  
Solution time is 0.174 sec and 0.557 sec for \textit{2C} and \textit{3C}, respectively.  
On the $40 \times 40$ coarse grid, we have  $DOF_h = 1930$ and   $DOF_h = 3530$ for two- and three-continuum problem, respectively. 
Solution time is 1.12 sec and 4.14 sec for \textit{2C} and \textit{3C} on $40 \times 40$ coarse grid.  
Average number of iterations for solution of the linear system of equations at each time layer is  $\bar{N}_{it} = 10$ for \textit{2C} and  \textit{3C} on $20 \times 20$ grid.  On $40 \times 40$ coarse grid, we have  $\bar{N}_{it} = 18$ for \textit{2C} and  \textit{3C}.  
The solution time is $265$ and $41$ times faster then fine grid solution for \textit{2C} model on $20 \times 20$ and $40 \times 40$ coarse grids, respectively. 
For \textit{3C} model, we obtain $190$ and $25$ times faster solution on $20 \times 20$ and $40 \times 40$ coarse grids, respectively.  
The solution using the NLMC coarse grid approximation is very accurate with $0.01$ \% of an error on the coarse grid for the coupled scheme.

\begin{figure}[h!]
\centering
\includegraphics[width=0.49 \textwidth]{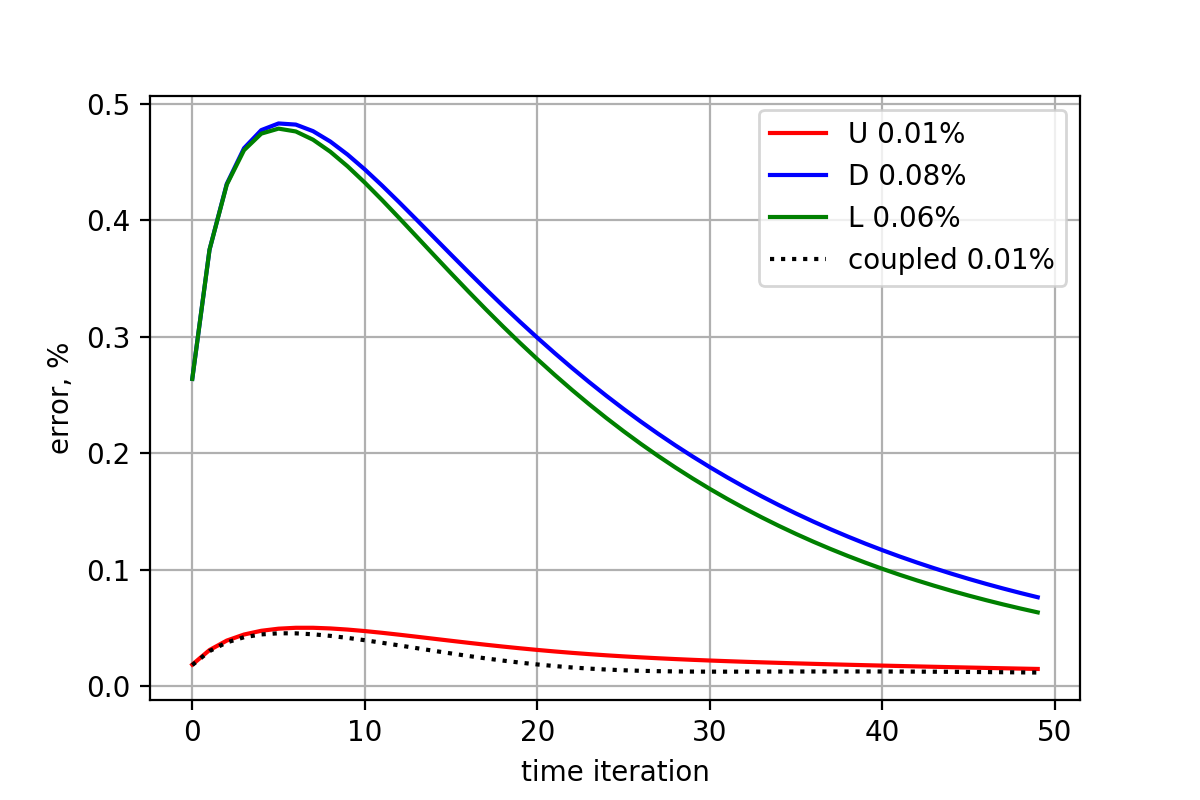}
\includegraphics[width=0.49 \textwidth]{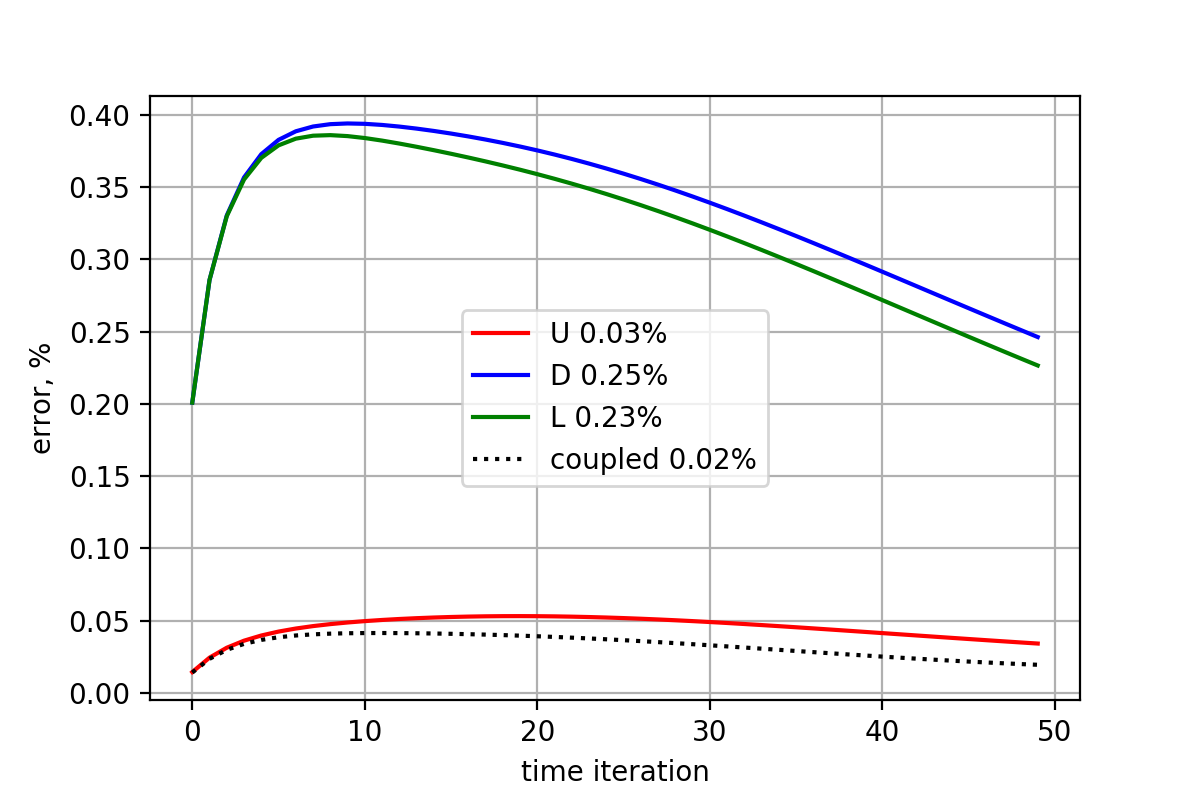}
\caption{Dynamic of the error (in percentage) for decoupled schemes with error at the final time. Coarse grid approximation using the NLMC method on $20 \times 20$ coarse grid. 
Left: two-continuum media (\textit{2C}).
Right: three-continuum media (\textit{3C})}
\label{fig:ms20}
\end{figure}

\begin{figure}[h!]
\centering
\includegraphics[width=0.49 \textwidth]{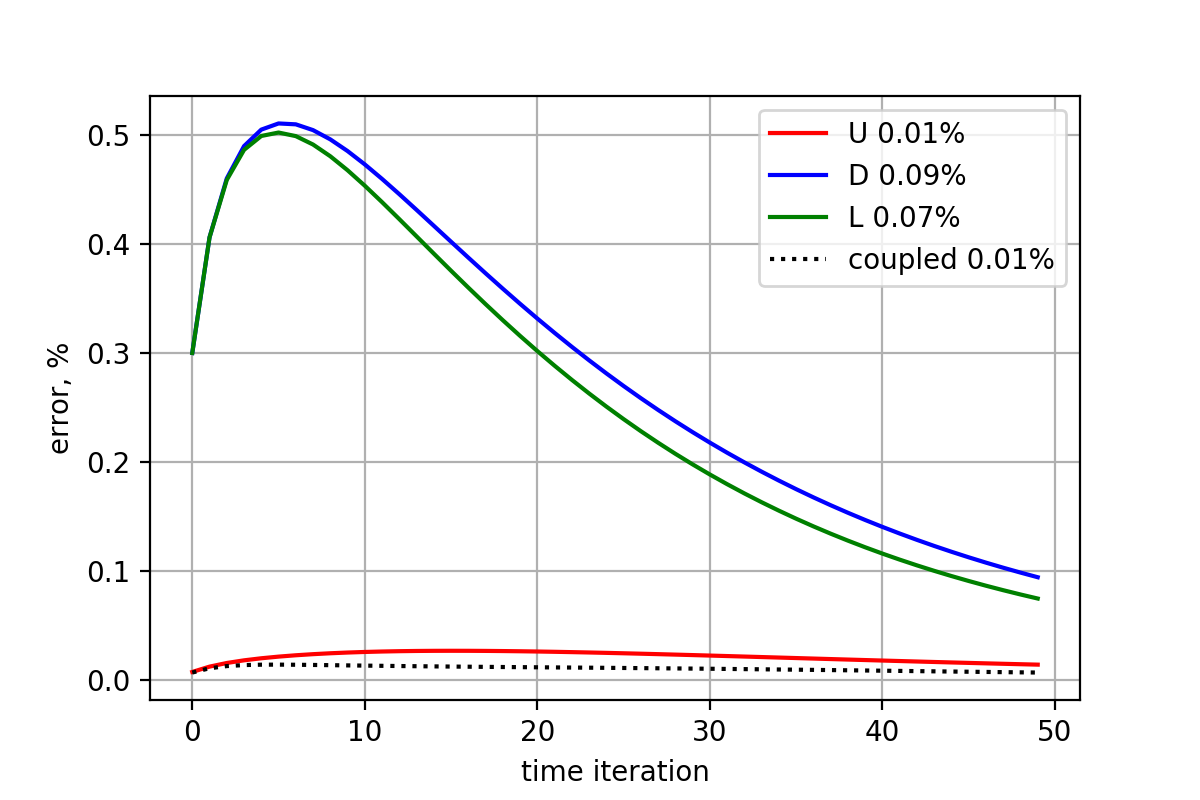}
\includegraphics[width=0.49 \textwidth]{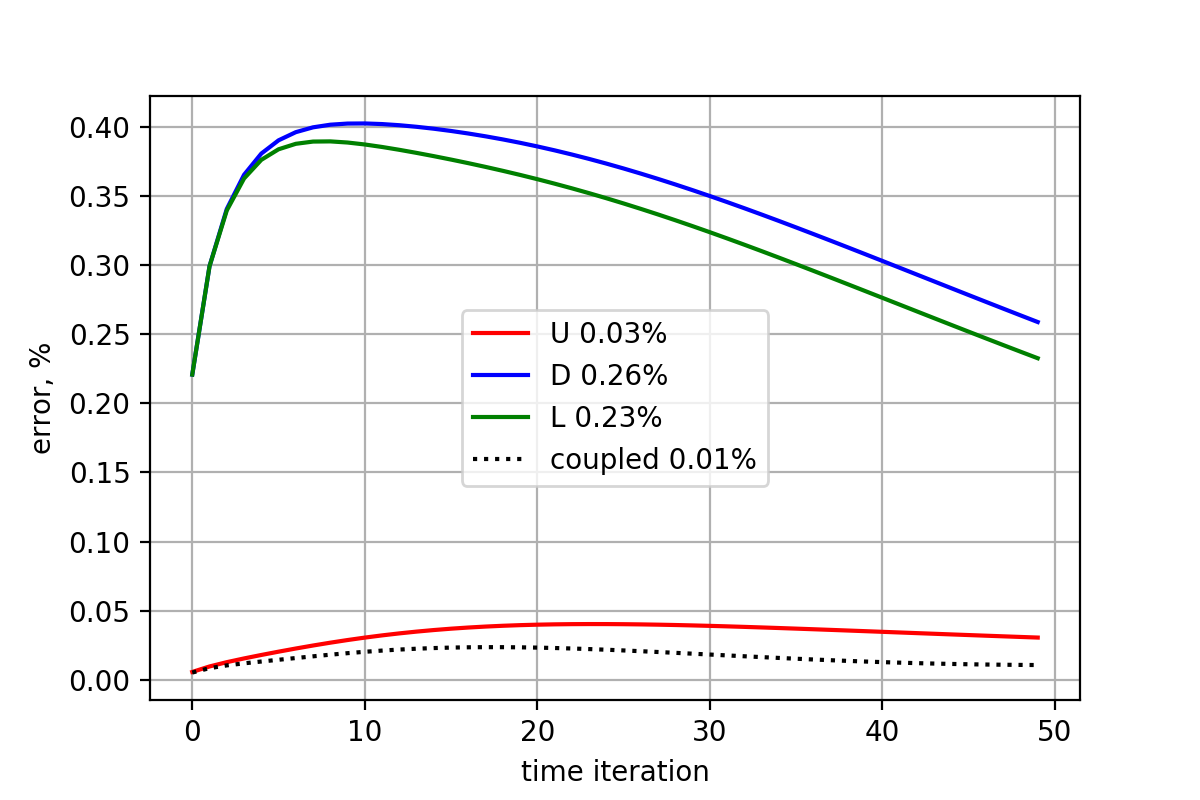}
\caption{Dynamic of the error (in percentage) for decoupled schemes with error at the final time.  Coarse grid approximation using the NLMC method on $40 \times 40$ coarse grid. 
Left: two-continuum media (\textit{2C}).
Right: three-continuum media (\textit{3C})}
\label{fig:ms40}
\end{figure}

\begin{table}[h!]
\centering
\begin{tabular}{|c|c|c|cc|}
\hline
 \multicolumn{5}{|c|}{Two-continuum media (\textit{2C})} \\ 
\hline
& time$_{tot}$(sec) & $e_H$ (\%)
& time$_1$  ($\bar{N}_{it,1}$)  
& time$_2$  ($\bar{N}_{it,2}$)   \\
\hline
\multicolumn{5}{|c|}{$20 \times 20$ coarse grid}\\
\hline
\textit{Coupled}	& 0.174 & 0.01 \% & \multicolumn{2}{|c|}{0.174 (10.0)}  	\\
\textit{L-scheme} 	& 0.035 & 0.06 \% & 0.021 (2.0) & 0.013 (10.0) \\
\textit{D-scheme} 	& 0.034 & 0.08 \% & 0.021 (2.0) & 0.012 (10.0) \\
\textit{U-scheme} 	& 0.041 & 0.01 \% & 0.025 (2.0) & 0.015 (10.0) \\
\hline
\multicolumn{5}{|c|}{$40 \times 40$ coarse grid} \\
\hline
Coupled	& 1.120 & 0.01 \% & \multicolumn{2}{|c|}{1.120 (18.04)}  	\\
\textit{L-scheme} 	& 0.135 & 0.07 \% & 0.105 (2.0) & 0.030 (18.08) \\
\textit{D-scheme} 	& 0.140 & 0.09 \% & 0.108 (2.0) & 0.031 (18.26) \\
\textit{U-scheme} 	& 0.151 & 0.01 \% & 0.121 (2.0) & 0.030 (18.08) \\
\hline
\end{tabular}
\caption{Time of the solution and the average number of iterations for two-continuum media (\textit{2C}) on the coarse grid using the NLMC method.  Coupled and decoupled schemes}
\label{tab-2c-ms}
\end{table}

\begin{table}[h!]
\centering
\begin{tabular}{|c|c|c|ccc|}
\hline
 \multicolumn{6}{|c|}{Three-continuum media (\textit{3C})} \\ 
\hline
& time$_{tot}$(sec) & $e_H$ (\%)
& time$_1$  ($\bar{N}_{it,1}$)  
& time$_2$  ($\bar{N}_{it,2}$)  
& time$_3$  ($\bar{N}_{it,3}$)   \\
\hline
\multicolumn{6}{|c|}{$20 \times 20$ coarse grid} \\
\hline
\textit{Coupled}	& 0.557 & 0.02 \% & \multicolumn{3}{|c|}{0.557 (10.0)}  \\
\textit{L-scheme} 	& 0.048 & 0.23 \%	
& 0.015 (1.0) & 0.020 (2.0) & 0.013 (10.0) \\
\textit{D-scheme} 	& 0.049 	& 0.25 \% 
& 0.015 (1.0) & 0.020 (2.0) & 0.013 (10.0) \\
\textit{U-scheme} 	& 0.055 	& 0.03 \%
& 0.016 (1.0) & 0.023 (2.0) & 0.014 (10.0) \\
\hline
\multicolumn{6}{|c|}{$40 \times 40$ coarse grid} \\
\hline
Coupled	& 4.14 & 0.01 \% & \multicolumn{3}{|c|}{4.14 (18.0)}  \\
\textit{L-scheme} 	& 0.234 & 0.23 \%	
& 0.064 (1.0) & 0.115 (2.0) & 0.032 (18.2) \\
\textit{D-scheme} 	& 0.204 	& 0.26 \% 
& 0.063 (1.0) & 0.111 (2.0) & 0.030 (18.2) \\
\textit{U-scheme} 	& 0.234 	& 0.03 \%
& 0.073 (1.0) & 0.128 (2.0) & 0.032 (18.3) \\
\hline
\end{tabular}
\caption{Time of the solution and the average number of iterations for three-continuum media (\textit{3C}) on the coarse grid using the NLMC method.   Coupled and decoupled schemes}
\label{tab-3c-ms}
\end{table}

In Figure \ref{fig:ms20} and \ref{fig:ms40}, we present dynamic of the relative errors for coupled scheme and three decoupling schemes (\textit{L}, \textit{D} and \textit{U}-schemes) on the $20 \times 20$ and $40 \times 40$ coarse grids, respectively.   
Similarly to the fine grid results, we sort coarse grid continua in ascending order based on their permeability.   In  \textit{L-scheme}, we first solve a problem with lower permeability. In \textit{U-scheme}, we first solve a problem for the continuum related to the higher permeability.   
In decoupled schemes,  we solve the equation for each continuum separately. 
On the  $20 \times 20$ coarse grid,   we  have $DOF_{H,1} = 400$ and $DOF_{H,2} = 162$ for \textit{2C} model, and $DOF_{H,1} = DOF_{H,2} = 400$ and $DOF_{H,3} = 162$ for  \textit{3C} model. 
On the  $40 \times 40$ coarse grid,  we  have $DOF_{H,1} = 1600$ and $DOF_{H,2} = 330$ for \textit{2C} model, and $DOF_{H,1} = DOF_{H,2} = 1600$ and $DOF_{H,3} = 330$ for  \textit{3C} model. 
From Figures \ref{fig:ms20} and  \ref{fig:ms40}, we observe that the coarse grid approximation using the NLMC method provides very good results with small errors for both coupled and decoupled schemes. 
However,   we again observe that the  \textit{U-scheme} gives better results with almost the same errors as a coupled scheme.

In Tables {\ref{tab-2c-ms} and \ref{tab-3c-ms}, we present the solution time and the average number of iterations for linear solver at each time layer.  
We present the total time of solution on the $20 \times 20$ and $40 \times 40$ coarse grids with the relative error in percentage at the final time.  
Similarly to the fine grid results, we present solution time related to each continuum equation with an average number of iterations for decoupled schemes. 
By system decoupling, we obtain a separate equation for each continuum and observe that the number of the iteration in the less permeable domain is smaller than in the higher permeable continuum.  
Solution time is $0.03-0.04$ and $0.04-0.05$ sec on $20 \times 20$ coarse grid for \textit{2C}  and \textit{3C} models which is $4.3$  and $10$ times faster then coupled scheme on the coarse grid.
On $40 \times 40$ coarse grid,  solution time is $0.1$ and $0.2$ sec for \textit{2C}  and \textit{3C} models which is $7.4$  and $17.7$ times faster then coupled scheme. 
The error between reference solution and solution using the NLMC method is very small, and decoupled schemes work similarly to the regular finite volume method.

\section{Conclusion}

We presented efficient decoupled schemes for multicontinuum flow problems in fractured porous media. The presented approach is based on the additive representation of the operator with semi-implicit approximation by time to decoupled equations for each continuum. 
We developed, analyzed, and investigated three decoupled schemes for solving the classical multicontinuum problems in fractured porous media on fine grids with finite volume approximation by space.  
The presented results show that the decoupled schemes are stable with respect to the initial condition and right-hand side and provide an accurate solution. We observe that the order of the continuum in solution sequence is matter, where a more accurate solution can be obtained when we first calculate the solution for a higher permeable continuum (\textit{U-scheme}). We observe that the continuum decoupling schemes are very efficient on the fine grid and can provide faster simulations ($20-40$ times faster than the solution using the coupled scheme on $200 \times 200$ fine grid). 
We extend the continuum decoupling approach for multiscale multicontinuum problems with nonlocal multicontinuum (NLMC) approximation on the coarse grid. We observe the same efficiency of the presented method with a very small error. 
Numerical results were performed for two- and three-continuum models in the two-dimensional formulation. 
By combining two techniques (NLMC and continuum decoupling), the simulation time becomes $0.04$ sec for the two-continuum model and $0.05$ sec for the three-continuum model for \textit{U-scheme} on the $20 \times 20$ coarse grid, where on the  $200 \times 200$ fine grid coupled scheme take $46.3$ sec and $105.6$ sec for simulation for two- and three-continuum model, respectively.

\bibliographystyle{plain}
\bibliography{lit}

\begin{thebibliography}{10}

\bibitem{akkutlu2017multiscale}
I~Yucel Akkutlu, Yalchin Efendiev, Maria Vasilyeva, and Yuhe Wang.
\newblock Multiscale model reduction for shale gas transport in a coupled
  discrete fracture and dual-continuum porous media.
\newblock {\em Journal of Natural Gas Science and Engineering}, 2017.

\bibitem{akkutlu2018multiscale}
I~Yucel Akkutlu, Yalchin Efendiev, Maria Vasilyeva, and Yuhe Wang.
\newblock Multiscale model reduction for shale gas transport in poroelastic
  fractured media.
\newblock {\em Journal of Computational Physics}, 353:356--376, 2018.

\bibitem{akkutlu2012multiscale}
I~Yucel Akkutlu, Ebrahim Fathi, et~al.
\newblock Multiscale gas transport in shales with local kerogen
  heterogeneities.
\newblock {\em SPE journal}, 17(04):1--002, 2012.

\bibitem{akkutlu2015multiscale}
IY~Akkutlu, Yalchin Efendiev, and Maria Vasilyeva.
\newblock Multiscale model reduction for shale gas transport in fractured
  media.
\newblock {\em Computational Geosciences}, pages 1--21, 2015.

\bibitem{ammosov2019splitting}
Dmitry~A Ammosov, Maria~V Vasilyeva, Masoud Babaei, Eric~T Chung, et~al.
\newblock Splitting schemes for the thermoporoelasticity problem in fractured
  media.
\newblock {\em Math. Notes NEFU}, 26:98--118, 2019.

\bibitem{balay2019petsc}
Satish Balay, Shrirang Abhyankar, Mark Adams, Jed Brown, Peter Brune, Kris
  Buschelman, Lisandro Dalcin, Alp Dener, Victor Eijkhout, W~Gropp, et~al.
\newblock Petsc users manual.
\newblock 2019.

\bibitem{barenblatt1960basic}
GI~Barenblatt, Iu~P Zheltov, and IN~Kochina.
\newblock Basic concepts in the theory of seepage of homogeneous liquids in
  fissured rocks [strata].
\newblock {\em Journal of applied mathematics and mechanics}, 24(5):1286--1303,
  1960.

\bibitem{bosma2017multiscale}
Sebastian Bosma, Hadi Hajibeygi, Matei Tene, and Hamdi~A Tchelepi.
\newblock Multiscale finite volume method for discrete fracture modeling on
  unstructured grids (ms-dfm).
\newblock {\em Journal of Computational Physics}, 2017.

\bibitem{bukavc2016operator}
Martina Buka{\v{c}}, Sun{\v{c}}ica {\v{C}}ani{\'c}, Boris Muha, and Roland
  Glowinski.
\newblock An operator splitting approach to the solution of fluid-structure
  interaction problems in hemodynamics.
\newblock In {\em Splitting Methods in Communication, Imaging, Science, and
  Engineering}, pages 731--772. Springer, 2016.

\bibitem{CELV2015}
E.~T. Chung, Y.~Efendiev, G.~Li, and M.~Vasilyeva.
\newblock Generalized multiscale finite element method for problems in
  perforated heterogeneous domains.
\newblock {\em Applicable Analysis}, 255:1--15, 2015.

\bibitem{chung2016adaptive}
Eric Chung, Yalchin Efendiev, and Thomas~Y Hou.
\newblock Adaptive multiscale model reduction with generalized multiscale
  finite element methods.
\newblock {\em Journal of Computational Physics}, 320:69--95, 2016.

\bibitem{chung2017coupling}
Eric~T Chung, Yalchin Efendiev, Tat Leung, and Maria Vasilyeva.
\newblock Coupling of multiscale and multi-continuum approaches.
\newblock {\em GEM-International Journal on Geomathematics}, 8(1):9--41, 2017.

\bibitem{chung2017constraint}
Eric~T Chung, Yalchin Efendiev, and Wing~Tat Leung.
\newblock Constraint energy minimizing generalized multiscale finite element
  method.
\newblock {\em Computer Methods in Applied Mechanics and Engineering},
  339:298--319, 2018.

\bibitem{chung2021contrast}
Eric~T Chung, Yalchin Efendiev, Wing~Tat Leung, and Wenyuan Li.
\newblock Contrast-independent, partially-explicit time discretizations for
  nonlinear multiscale problems.
\newblock {\em Mathematics}, 9(23):3000, 2021.

\bibitem{chung2017non}
Eric~T Chung, Yalchin Efendiev, Wing~Tat Leung, Maria Vasilyeva, and Yating
  Wang.
\newblock Non-local multi-continua upscaling for flows in heterogeneous
  fractured media.
\newblock {\em Journal of Computational Physics}, 372:22--34, 2018.

\bibitem{Quarteroni2008coupling}
Carlo D'angelo and Alfio Quarteroni.
\newblock On the coupling of 1d and 3d diffusion-reaction equations:
  application to tissue perfusion problems.
\newblock {\em Mathematical Models and Methods in Applied Sciences},
  18(08):1481--1504, 2008.

\bibitem{d2012mixed}
Carlo D'Angelo and Anna Scotti.
\newblock A mixed finite element method for darcy flow in fractured porous
  media with non-matching grids.
\newblock {\em ESAIM: Mathematical Modelling and Numerical Analysis},
  46(2):465--489, 2012.

\bibitem{douglas1990dual}
Jim Douglas~Jr and T~Arbogast.
\newblock Dual porosity models for flow in naturally fractured reservoirs.
\newblock {\em Dynamics of Fluids in Hierarchical Porous Media}, pages
  177--221, 1990.

\bibitem{weinan2007heterogeneous}
Weinan E, Bjorn Engquist, Xiantao Li, Weiqing Ren, and Eric Vanden-Eijnden.
\newblock Heterogeneous multiscale methods: a review.
\newblock {\em Commun. Comput. Phys}, 2(3):367--450, 2007.

\bibitem{EGG_MultiscaleMOR}
Y.~Efendiev, J.~Galvis, and E.~Gildin.
\newblock Local-global multiscale model reduction for flows in highly
  heterogeneous media.
\newblock {\em Journal of Computational Physivs}, 231 (24):8100--8113, 2012.

\bibitem{egh12}
Y.~Efendiev, J.~Galvis, and T.~Hou.
\newblock Generalized multiscale finite element methods.
\newblock {\em Journal of Computational Physics}, 251:116--135, 2013.

\bibitem{eh09}
Y.~Efendiev and T.~Hou.
\newblock {\em {Multiscale Finite Element Methods: Theory and Applications}},
  volume~4 of {\em Surveys and Tutorials in the Applied Mathematical Sciences}.
\newblock Springer, New York, 2009.

\bibitem{efendiev2015hierarchical}
Yalchin Efendiev, Seong Lee, Guanglian Li, Jun Yao, and Na~Zhang.
\newblock Hierarchical multiscale modeling for flows in fractured media using
  generalized multiscale finite element method.
\newblock {\em GEM-International Journal on Geomathematics}, 6(2):141--162,
  2015.

\bibitem{efendiev2022efficient}
Yalchin Efendiev, Wing~Tat Leung, Guang Lin, and Zecheng Zhang.
\newblock Efficient hybrid explicit-implicit learning for multiscale problems.
\newblock {\em Journal of Computational Physics}, page 111326, 2022.

\bibitem{efendiev2021temporal}
Yalchin Efendiev, Sai-Mang Pun, and Petr~N Vabishchevich.
\newblock Temporal splitting algorithms for non-stationary multiscale problems.
\newblock {\em Journal of Computational Physics}, 439:110375, 2021.

\bibitem{efendiev2021splitting}
Yalchin Efendiev and Petr~N Vabishchevich.
\newblock Splitting methods for solution decomposition in nonstationary
  problems.
\newblock {\em Applied Mathematics and Computation}, 397:125785, 2021.

\bibitem{formaggia2014reduced}
Luca Formaggia, Alessio Fumagalli, Anna Scotti, and Paolo Ruffo.
\newblock A reduced model for darcy's problem in networks of fractures.
\newblock {\em ESAIM: Mathematical Modelling and Numerical Analysis},
  48(4):1089--1116, 2014.

\bibitem{garipov2016discrete}
TT~Garipov, M~Karimi-Fard, and HA~Tchelepi.
\newblock Discrete fracture model for coupled flow and geomechanics.
\newblock {\em Computational Geosciences}, 20(1):149--160, 2016.

\bibitem{gaspar2014explicit}
Francisco Gaspar, Alexander Grigoriev, and Petr Vabishchevich.
\newblock Explicit-implicit splitting schemes for some systems of evolutionary
  equations.
\newblock {\em International Journal of Numerical Analysis \& Modeling}, 11(2),
  2014.

\bibitem{ginting2011application}
Victor Ginting, Felipe Pereira, Michael Presho, and Shaochang Wo.
\newblock Application of the two-stage markov chain monte carlo method for
  characterization of fractured reservoirs using a surrogate flow model.
\newblock {\em Computational Geosciences}, 15(4):691, 2011.

\bibitem{hkj12}
H.~Hajibeygi, D.~Kavounis, and P.~Jenny.
\newblock A hierarchical fracture model for the iterative multiscale finite
  volume method.
\newblock {\em Journal of Computational Physics}, 230(24):8729--8743, 2011.

\bibitem{hoteit2008efficient}
Hussein Hoteit and Abbas Firoozabadi.
\newblock An efficient numerical model for incompressible two-phase flow in
  fractured media.
\newblock {\em Advances in Water Resources}, 31(6):891--905, 2008.

\bibitem{houwu97}
T.~Hou and X.H. Wu.
\newblock A multiscale finite element method for elliptic problems in composite
  materials and porous media.
\newblock {\em J. Comput. Phys.}, 134:169--189, 1997.

\bibitem{jenny2005adaptive}
Patrick Jenny, Seong~H Lee, and Hamdi~A Tchelepi.
\newblock Adaptive multiscale finite-volume method for multiphase flow and
  transport in porous media.
\newblock {\em Multiscale Modeling \& Simulation}, 3(1):50--64, 2005.

\bibitem{karimi2003efficient}
Mohammad Karimi-Fard, Luis~J Durlofsky, Khalid Aziz, et~al.
\newblock An efficient discrete fracture model applicable for general purpose
  reservoir simulators.
\newblock In {\em SPE Reservoir Simulation Symposium}. Society of Petroleum
  Engineers, 2003.

\bibitem{karimi2001numerical}
Mohammad Karimi-Fard, Abbas Firoozabadi, et~al.
\newblock Numerical simulation of water injection in 2d fractured media using
  discrete-fracture model.
\newblock In {\em SPE annual technical conference and exhibition}. Society of
  Petroleum Engineers, 2001.

\bibitem{kolesov2014splitting}
Alexandr Kolesov, Petr Vabishchevich, and Maria Vasilyeva.
\newblock Splitting schemes for poroelasticity and thermoelasticity problems.
\newblock {\em Computers \& Mathematics with Applications}, 67(12):2185--2198,
  2014.

\bibitem{leung2022multirate}
Wing~Tat Leung and Yating Wang.
\newblock Multirate partially explicit scheme for multiscale flow problems.
\newblock {\em SIAM Journal on Scientific Computing}, 44(3):A1775--A1806, 2022.

\bibitem{lunati2006multiscale}
Ivan Lunati and Patrick Jenny.
\newblock Multiscale finite-volume method for compressible multiphase flow in
  porous media.
\newblock {\em Journal of Computational Physics}, 216(2):616--636, 2006.

\bibitem{martin2005modeling}
Vincent Martin, J{\'e}r{\^o}me Jaffr{\'e}, and Jean~E Roberts.
\newblock Modeling fractures and barriers as interfaces for flow in porous
  media.
\newblock {\em SIAM Journal on Scientific Computing}, 26(5):1667--1691, 2005.

\bibitem{schwenck2015dimensionally}
Nicolas Schwenck, Bernd Flemisch, Rainer Helmig, and Barbara~I Wohlmuth.
\newblock Dimensionally reduced flow models in fractured porous media:
  crossings and boundaries.
\newblock {\em Computational Geosciences}, 19(6):1219--1230, 2015.

\bibitem{tene2016multiscale}
M~Tene, MS~Al~Kobaisi, and H~Hajibeygi.
\newblock Multiscale projection-based embedded discrete fracture modeling
  approach (f-ams-pedfm).
\newblock In {\em ECMOR XV-15th European Conference on the Mathematics of Oil
  Recovery}, 2016.

\bibitem{ctene2016algebraic}
Matei {\c{T}}ene, Mohammed~Saad Al~Kobaisi, and Hadi Hajibeygi.
\newblock Algebraic multiscale method for flow in heterogeneous porous media
  with embedded discrete fractures (f-ams).
\newblock {\em Journal of Computational Physics}, 321:819--845, 2016.

\bibitem{ctene2017projection}
Matei {\c{T}}ene, Sebastian~BM Bosma, Mohammed~Saad Al~Kobaisi, and Hadi
  Hajibeygi.
\newblock Projection-based embedded discrete fracture model (pedfm).
\newblock {\em Advances in Water Resources}, 105:205--216, 2017.

\bibitem{vabishchevich2013additive}
Petr~N Vabishchevich.
\newblock Additive operator-difference schemes.
\newblock In {\em Additive Operator-Difference Schemes}. de Gruyter, 2013.

\bibitem{vasilyeva2019nonlocal}
Maria Vasilyeva, Eric~T Chung, Siu~Wun Cheung, Yating Wang, and Georgy
  Prokopev.
\newblock Nonlocal multicontinua upscaling for multicontinua flow problems in
  fractured porous media.
\newblock {\em Journal of Computational and Applied Mathematics}, 355:258--267,
  2019.

\bibitem{warren1963behavior}
JE~Warren, P~Jj Root, et~al.
\newblock The behavior of naturally fractured reservoirs.
\newblock {\em Society of Petroleum Engineers Journal}, 3(03):245--255, 1963.

\bibitem{wu2011multiple}
Yu-Shu Wu, Yuan Di, Zhijiang Kang, and Perapon Fakcharoenphol.
\newblock A multiple-continuum model for simulating single-phase and multiphase
  flow in naturally fractured vuggy reservoirs.
\newblock {\em Journal of Petroleum Science and Engineering}, 78(1):13--22,
  2011.

\bibitem{wu2007triple}
Yu-Shu Wu, Christine Ehlig-Economides, Guan Qin, Zhijang Kang, Wangming Zhang,
  Babatunde Ajayi, and Qingfeng Tao.
\newblock A triple-continuum pressure-transient model for a naturally fractured
  vuggy reservoir.
\newblock 2007.

\bibitem{yao2010discrete}
Jun Yao, Zhaoqin Huang, Yajun Li, Chenchen Wang, Xinrui Lv, et~al.
\newblock Discrete fracture-vug network model for modeling fluid flow in
  fractured vuggy porous media.
\newblock In {\em International oil and gas conference and exhibition in
  China}. Society of Petroleum Engineers, 2010.

\end{thebibliography}

\end{document}